\newtheorem{thm}{Theorem}[section]
\newtheorem{lem}[thm]{Lemma}
\newtheorem{prp}[thm]{Proposition}
\newtheorem{cor}[thm]{Corollary}
\newcommand{\C}{\mathbb{C}}
\newcommand{\R}{\mathbb{R}}
\newcommand{\N}{\mathbb{N}}
\newcommand{\Z}{\mathbb{Z}}
\newcommand{\Quat}{\mathbb{H}}
\newcommand{\Qi}{\mathbbm{i}}
\newcommand{\Qj}{\mathbbm{j}}
\newcommand{\Qk}{\mathbbm{k}}
\newcommand{\Qu}{\mathbbm{u}}
\newcommand{\IQ}{I_\Quat}
\newcommand{\QHarm}[2]{\mathcal{H}_{#1,#2}}
\newcommand{\QPoly}[2]{\mathcal{P}_{#1,#2}}
\newcommand{\Poly}{\mathcal{P}}
\newcommand{\hPoly}[1]{\mathcal{P}_{#1}}
\newcommand{\Harm}{\mathcal{H}}
\newcommand{\hHarm}[1]{\mathcal{H}_{#1}}
\newcommand{\CPoly}[2]{\mathcal{Q}_{#1,#2}}
\newcommand{\CHarm}[2]{\mathcal{Y}_{#1,#2}}
\newcommand{\pU}{U}   
\newcommand{\pG}{J}   
\newcommand{\pP}{P}   
\newcommand{\osum}{\mathop{\widehat\bigoplus}}
\newcommand{\Sphere}{\mathbb{S}}          
\newcommand{\weight}{\varpi}              
\newcommand{\tc}{:}
\DeclareMathOperator{\id}{id}
\DeclareMathOperator*{\esssup}{ess\,sup}  
\newcommand{\opL}{\mathfrak{L}}           
\newcommand{\opLB}{{\Delta_\Sphere}}      
\newcommand{\nablaLB}{\nabla_\Sphere}     
\newcommand{\opMS}{\Gamma}                
\newcommand{\opDg}{\Theta}                
\newcommand{\opGL}{\Delta}                
\newcommand{\nablaGL}{\nabla}             
\newcommand{\nablaL}{\nabla_H}            
\newcommand{\ladderU}{T_\uparrow}         
\newcommand{\ladderD}{T_\downarrow}
\newcommand{\ladderR}{T_\rightarrow}
\newcommand{\group}[1]{\mathrm{#1}}       
\newcommand{\lie}{\mathfrak}
\newcommand{\RSym}{\group{O}(4n)}
\newcommand{\Sp}{\group{Sp}}
\newcommand{\QSymR}{\Sp(n)}
\newcommand{\QSymL}{\overline{\Sp(1)}}
\newcommand{\QSym}{\QSymL \cdot \QSymR}
\newcommand{\dist}{\varrho}                        
\newcommand{\VV}{\mathcal{V}}
\newcommand{\WW}{\mathcal{W}}
\newcommand{\sloc}{\mathrm{sloc}}
\newcommand{\inner}[2]{\langle #1 | #2 \rangle}   
\newcommand{\smc}{\mathrm{c}}
\newcommand{\sms}{\mathrm{s}}
\numberwithin{equation}{section}
\begin{document}
\title[A sharp multiplier theorem on quaternionic spheres]{Quaternionic spherical harmonics and a sharp multiplier theorem on quaternionic spheres}

\author[J. Ahrens]{Julian Ahrens}
\address{Deutsches Forschungszentrum \\ f\"ur K\"unstliche Intelligenz \\ Trippstadter Str.\ 122 \\ D-67663 Kaiserslautern \\ Germany}
\email{julian.ahrens@dfki.de}
\author[M. G. Cowling]{Michael G. Cowling}
\address{School of Mathematics \\ University of New South Wales \\ UNSW Sydney NSW 2052 \\ Australia}
\email{m.cowling@unsw.edu.au}
\author[A. Martini]{Alessio Martini}
\address{School of Mathematics\\ University of Birmingham\\Edgbaston\\Birmingham B15 2TT \\ United Kingdom}
\email{a.martini@bham.ac.uk}
\author[D. M\"uller]{Detlef M\"uller}
\address{Mathematisches Seminar \\ C.-A.-Universit\"at zu Kiel \\ Ludewig-Meyn-Str.\ 4 \\ D-24118 Kiel \\ Germany}
\email{mueller@math.uni-kiel.de}

\subjclass[2000]{Primary: 42B15, 43A85; Secondary: 53C26}

\keywords{spectral multiplier, sub-Laplacian, quaternionic sphere, spherical harmonic}

\thanks{Cowling was supported by the Australian Research Council, through grant DP140100531.
Martini is a member of the Gruppo Nazionale per l'Analisi Matema\-tica, la Probabilit\`a e le loro Applicazioni (GNAMPA) of the Istituto Nazionale di Alta Matematica (INdAM).
M\"uller was supported by the Deutsche Forschungsgemeinschaft, through grant MU 761/11-1.}

\begin{abstract}
A sharp $L^p$ spectral multiplier theorem of Mihlin--H\"ormander type is proved for a distinguished sub-Laplacian on quaternionic spheres.
This is the first such result on compact sub-Riemannian manifolds where the horizontal space has corank greater than one.
The proof hinges on the analysis of the quaternionic spherical harmonic decomposition, of which we present an elementary derivation.
\end{abstract}

\maketitle

\section{Introduction}

Let $\opGL$ be the Laplacian in Euclidean space.
The investigation of the relation between the $L^p$-boundedness of functions $F(\opGL)$ of the operator and the size and smoothness of the ``spectral multiplier'' $F$ is a classical but still very active area of research of harmonic analysis, with important open problems such as the Bochner--Riesz conjecture.
Analogous problems have been investigated in non-Euclidean settings, and a number of optimal results have been proved when the Laplacian is replaced by a more general self-adjoint elliptic operator on a manifold, such as the Laplace--Beltrami operator on a compact Riemannian manifold.
However, weakening the ellipticity assumption on the operator by passing to sub-elliptic operators, whose underlying geometry is considerably more complex than in  the Riemannian case, leads to substantial new challenges, and very little is known about sharp results in this context.
This work is part of a programme aiming at shedding some light on this problem.
Here we consider a sub-elliptic operator in a setting that presents several new difficulties.
Despite these, we are able to prove a sharp spectral multiplier theorem via a delicate analysis of spherical harmonics on quaternionic spheres.

Let $\Quat$ be the skew field of quaternions. Recall that $\Quat$ is a $4$-dimensional associative unital algebra over $\R$. Each element $x \in \Quat$ may be uniquely written as
\begin{equation}\label{eq:quaternion}
x = a+b\Qi+c\Qj+d\Qk,
\end{equation}
where $a,b,c,d \in \R$ and the quaternionic imaginary units $\Qi,\Qj,\Qk$ satisfy the relations
\begin{equation}\label{eq:quaternioncommutation}
\Qi^2 = \Qj^2 = \Qk^2 = \Qi \Qj \Qk = -1.
\end{equation}
For $x \in \Quat$ as in \eqref{eq:quaternion}, we denote by $\Re x$, $\Im x$, $\overline{x}$ and $|x|$ the real part, the imaginary part, the conjugate and the modulus of $x$, given by
\[
\Re x = a, \qquad \Im x = b\Qi+c\Qj+d\Qk, \qquad \overline{x} = \Re x-\Im x, \qquad |x| = \sqrt{x \overline x}.
\]

Let $n \in \N$ be greater than $1$. We consider $\Quat^n$ as a left $\Quat$-module.
Define the quaternionic inner product $\langle \cdot,\cdot \rangle : \Quat^n \times \Quat^n \to \Quat$ by
\[
\langle x,y \rangle = \sum_{j=1}^n x_j \, \overline{y_j}
\]
for all $x=(x_1,\dots,x_n)$ and $y=(y_1,\dots,y_n)$ in $\Quat^n$. The real part $\Re \langle \cdot,\cdot \rangle$ is the usual $\R$-bilinear inner product on $\Quat^n$, corresponding to the identification of $\Quat^n$ with $\R^{4n}$.

Let $\Sphere$ be the unit sphere in $\Quat^n$:
\[
\Sphere = \{ x \in \Quat^n \tc \langle x, x \rangle = 1\}.
\]
Then $\Sphere$ is a smooth real hypersurface in $\Quat^n$, that is, $\dim \Sphere = 4n-1$.
As usual, the tangent space $T_x \Sphere$ at each point $x \in \Sphere$ may be identified with a $1$-codimensional $\R$-linear subspace of $\Quat^n$, given by
\[
T_x \Sphere = \{ y \in \Quat^n \tc \Re \langle x,y \rangle = 0 \}.
\]
The restriction of the inner product $\Re \langle \cdot,\cdot \rangle$ to each tangent space determines a Riemannian metric on $\Sphere$. Unless otherwise specified, integration on $\Sphere$ is considered with respect to the rotation-invariant probability measure $\sigma$ on $\Sphere$.

Let $H \Sphere$ be the tangent distribution on $\Sphere$ of corank $3$ defined by
\[
H_x \Sphere = \{ y \in \Quat^n \tc \langle x,y \rangle =0\}
\]
for all $x \in \Sphere$. It may be shown that $H\Sphere$ is bracket-generating (see \cite{biquard_quaternionic_1999,astengo_cayley_2004,baudoin_subelliptic_2014}). So, together with the Riemannian metric, it determines a sub-Riemannian structure on $\Sphere$, whose horizontal distribution is $H\Sphere$. We denote the corresponding intrinsic sub-Laplacian (see \cite{montgomery_tour_2002,agrachev_intrinsic_2009}) by $\opL$.

A more explicit description of the horizontal distribution $H\Sphere$ and the sub-Laplacian $\opL$ may be given.
It is easily checked that the vector fields
\begin{equation}\label{eq:unitfields}
T_\Qi : x \mapsto -\Qi x,\qquad T_\Qj : x \mapsto -\Qj x, \qquad T_\Qk : x \mapsto -\Qk x
\end{equation}
are tangent to the sphere $\Sphere$, and that we have the orthogonal decomposition
\[
T_x \Sphere = H_x \Sphere \oplus \R T_\Qi|_x \oplus \R T_\Qj|_x \oplus \R T_\Qk|_x
\]
for all $x \in \Sphere$. Indeed $T_\Qi|_x,T_\Qj|_x,T_\Qk|_x$ form an orthonormal basis of the orthogonal complement of $H_x \Sphere$ in $T_x \Sphere$ for all $x \in \Sphere$. Correspondingly, for all real-valued smooth functions $f$ on the sphere $\Sphere$, the Riemannian gradient $\nablaLB f$ may be written as
\[
\nablaLB f = \nablaL f + (T_\Qi f) T_\Qi  + (T_\Qj f) T_\Qj + (T_\Qk f) T_\Qk,
\]
where $\nablaL$ denotes the horizontal gradient associated with $H\Sphere$ (that is, the projection onto $H\Sphere$ of the Riemannian gradient) and the vector fields $T_\Qi,T_\Qj,T_\Qk$ are identified with first-order differential operators as usual. In particular,
\[
\Re \langle \nablaLB f, \nablaLB g\rangle = \Re \langle \nablaL f, \nablaL g\rangle + \sum_{\Qu \in \{\Qi,\Qj,\Qk\}} (T_\Qu f) (T_\mathbbm{u} g)
\]
for all real-valued smooth functions $f,g$ on $\Sphere$. Taking integrals over $\Sphere$ and then integrating by parts finally gives that
\[
\opLB = \opL + \opMS,
\]
where $\opLB$ is the Laplace--Beltrami operator on $\Sphere$ and $\opMS = -(T_\Qi^2+T_\Qj^2+T_\Qk^2)$.

The sub-Laplacian $\opL$ is a nonnegative essentially self-adjoint hypoelliptic operator on $L^2(\Sphere)$.
Hence a functional calculus for $\opL$ may be defined via the spectral theorem and, for all bounded Borel functions $F : \R \to \C$, the operator $F(\opL)$ is bounded on $L^2(\Sphere)$. Here we are interested in the problem of finding sufficient conditions on the function $F$ so that the operator $F(\opL)$, initially defined on $L^2(\Sphere)$, extends to a bounded operator on $L^p(\Sphere)$ for some $p \neq 2$.

For all $s \in [0,\infty)$, let $L^2_s(\R)$ denote the $L^2$ Sobolev space on $\R$ of (fractional) order $s$. We also define a local scale-invariant Sobolev norm as follows: for a Borel function $F : \R \to \C$, set
\[
\|F\|_{L^2_{s,\sloc}} = \sup_{t \geq 0} \| F(t \cdot) \, \chi \|_{L^2_s(\R)},
\]
for any fixed nonzero cutoff function $\chi \in C^\infty_c((0,\infty))$. Note that different choices of $\chi$ give rise to equivalent norms. Note moreover that $\|F\|_{L^2_{s,\sloc}} \gtrsim |F(0)|$, since the value $t=0$ is included in the supremum above.

By Sobolev's embedding theorem, if $\|F\|_{L^2_{s,\sloc}} < \infty$ for some $s>1/2$, then $F$ agrees almost everywhere with a continuous function on $(0,\infty)$. It is this continuous version of $F$ that features in the first of our main results, which is an $L^p$ spectral multiplier theorem of Mihlin--H\"ormander type for the sub-Laplacian $\opL$.

\begin{thm}\label{thm:mainMH}
If $F : \R \to \C$ is a Borel function which is continuous on $(0,\infty)$, and $\|F\|_{L^2_{s,\sloc}} < \infty$ for some $s > (4n-1)/2$, then the operator $F(\opL)$ is of weak type $(1,1)$ and bounded on $L^p(\Sphere)$ for all $p \in (1,\infty)$, and moreover
\[
\| F(\opL) \|_{L^1 \to L^{1,\infty}} \leq C_s \, \|F\|_{L^2_{s,\sloc}}, \qquad \| F(\opL) \|_{L^p \to L^{p}} \leq C_{s,p} \, \|F\|_{L^2_{s,\sloc}}.
\]
\end{thm}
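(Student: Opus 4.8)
The plan is to follow the now-standard route for proving Mihlin--Hörmander multiplier theorems on spaces of homogeneous type, namely to realise $\Sphere$ with its sub-Riemannian (Carnot--Carathéodory) distance $\dist$ and the measure $\sigma$ as a doubling metric measure space, and then to invoke a Calderón--Zygmund-type theory for the ``heat semigroup'' $e^{-t\opL}$. The core analytic input one must supply by hand is a \emph{weighted $L^2$ gradient estimate} (a ``Cowling--Sikora'' or ``Plancherel-type'' estimate) for the kernels of $F(\opL)$ when $F$ is supported in a dyadic interval $[R/2, 2R]$: namely, writing $K_{F(\opL)}$ for the (distributional) convolution kernel, one wants a bound of the form
\[
\int_\Sphere \bigl(1 + R\,\dist(x,y)\bigr)^{2s} \, |K_{F(\opL)}(x,y)|^2 \, d\sigma(y) \lesssim R^{Q} \, \|F(R\,\cdot)\|_{L^2_s}^2
\]
uniformly in $x \in \Sphere$ and $R \geq 1$, where $Q = 4n-1$ plays the role of the homogeneous dimension at small scales and is also the topological dimension. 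Once such an estimate is available for some $s$ (here all $s > Q/2$), the general machinery — e.g.\ the abstract theorems of Duong--Ouhabaz--Sikora or their refinements — yields weak type $(1,1)$, $L^p$-boundedness for $1 < p < \infty$, and the stated operator norm bounds; the contribution at $t=0$ in the $L^2_{s,\sloc}$ norm is harmless because it only sees the projection onto constants, for which $F(\opL)$ acts as multiplication by $F(0)$.

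The key steps, in order, are as follows. First I would record the spectral decomposition of $L^2(\Sphere)$ under $\opL$: by the quaternionic spherical harmonic analysis developed earlier in the paper, $L^2(\Sphere)$ decomposes into joint eigenspaces of $\opL$ and $\opMS$ indexed by a pair of integer parameters, with explicit eigenvalues; this gives a description of $F(\opL)$ as a discrete sum of projections and identifies the spectrum of $\opL$ as a discrete set of points of size $\sim$ (polynomial in the indices). Second, I would establish the ``finite propagation speed'' property for the wave operator $\cos(t\sqrt{\opL})$ — this holds for any sub-Laplacian associated with a bracket-generating distribution, since the wave equation propagates at unit speed with respect to the Carnot--Carathéodory distance — so that $K_{\cos(t\sqrt{\opL})}(x,\cdot)$ is supported in the $\dist$-ball of radius $|t|$. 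Third, using finite propagation speed together with the Fourier inversion formula $F(\sqrt{\opL}) = \tfrac{1}{2\pi}\int \hat F(t)\,\cos(t\sqrt\opL)\,dt$, I would reduce the weighted estimate to an \emph{unweighted} $L^2$ bound for kernels of $\varphi(\opL)$ with $\varphi$ a dyadically localised function — a ``restriction-type'' or Sobolev-trace estimate — of the form $\|K_{\varphi(\opL)}(x,\cdot)\|_{L^2(\Sphere)}^2 \lesssim R^{Q}\|\varphi(R\cdot)\|_\infty^2$ uniformly in $x$, which by the spectral decomposition amounts to controlling the size of the joint eigenprojection kernels on the diagonal, i.e.\ the dimensions and sup-norms of the quaternionic spherical harmonics in each eigenspace. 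Fourth, I would verify the doubling property of $(\Sphere, \dist, \sigma)$ and combine everything via the abstract multiplier theorem.

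The main obstacle I expect is the third step: controlling the diagonal behaviour of the joint eigenprojection kernels. Because $\opL$ is only sub-elliptic, its eigenspaces are \emph{not} the full spherical harmonic spaces but rather refinements thereof indexed by two parameters, and the eigenvalue $\lambda$ of $\opL$ is attained with high multiplicity coming from a whole range of values of the ``central'' parameter (the $\opMS$-eigenvalue); summing the corresponding projection kernels on the diagonal — each of which is a Jacobi-type polynomial in $\Re\langle x, x\rangle$ and related quantities evaluated at the diagonal — and showing the total is $O(R^{Q})$ rather than something larger requires sharp asymptotics for these quaternionic Jacobi polynomials and a careful count of how many $(\ell, m)$-pairs contribute to a given $\lambda$. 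This is precisely where the corank-$3$ feature of the distribution makes the analysis genuinely harder than the CR (corank-$1$) case treated previously in the literature, and where the ``elementary derivation'' of the quaternionic spherical harmonic decomposition advertised in the abstract does the essential work; the exponent $Q = 4n-1$ that emerges is the topological dimension, reflecting that the small-scale homogeneous dimension of this sub-Riemannian structure coincides with $4n-1$ up to the logarithmic-free range of $s$, so that the resulting threshold $s > (4n-1)/2$ is sharp.
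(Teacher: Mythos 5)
Your overall architecture (reduction via finite propagation speed and the abstract Calder\'on--Zygmund machinery of Cowling--Sikora/Duong--Ouhabaz--Sikora to a weighted Plancherel estimate, proved via the quaternionic spherical harmonic decomposition) is the right one, but the key estimate you propose to prove is false, and the error hides exactly the main difficulty of the theorem. You claim
\[
\int_\Sphere \bigl(1+R\,\dist(x,y)\bigr)^{2s}\,|K_{F(\opL)}(x,y)|^2\,d\sigma(y) \lesssim R^{Q}\,\|F(R\,\cdot)\|_{L^2_s}^2
\]
with $Q=4n-1$, asserting that $4n-1$ ``plays the role of the homogeneous dimension at small scales.'' It does not: the volume growth here is $\sigma(B(x,r))\sim r^{4n+2}$ for small $r$ (Lemma \ref{lem:useful_estimates}\ref{en:balls_estimate}), so the local homogeneous dimension is $4n+2$, strictly larger than the topological dimension $4n-1$ -- this discrepancy is forced by the lack of ellipticity of $\opL$. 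Consequently the unweighted Plancherel estimate, which by the spectral decomposition is just $\sum_{(h,m)\in\IQ}\dim\QHarm{h}{m}\,|F(\sqrt{\lambda^\opL_{h,m}})|^2$, is genuinely of size $N^{4n+2}\|F(N\cdot)\|_{N,2}^2$ and no better; feeding this through finite propagation speed and Fourier inversion yields your displayed estimate only with $Q=4n+2$, and hence only the non-sharp threshold $s>(4n+2)/2$. No amount of sharper asymptotics for the zonal Jacobi polynomials on the diagonal can change this, because on the diagonal the projection kernel equals $\dim\QHarm{h}{m}$ exactly.

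The actual proof must therefore introduce a \emph{second} weight, $\weight(x,y)=\sqrt{1-|\langle x,y\rangle|^2}$, which vanishes not just at $y=x$ but on the whole $3$-dimensional $\Sp(1)$-orbit $\{cx : |c|=1\}$, and prove the weighted Plancherel bound
\[
\esssup_{y}\int_\Sphere |K_{F(\sqrt{\opL})}(x,y)|^2\,\weight(x,y)^{\alpha}\,d\sigma(x)\lesssim N^{4n+2-\alpha}\,\|F(N\cdot)\|_{N,2}^2
\]
for all $\alpha<3$ (Proposition \ref{prp:est}\ref{en:est_plancherel}); taking $\alpha$ arbitrarily close to $3$ is what converts $(4n+2)/2$ into $(4n-1)/2$. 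This is the hard step: multiplication by $\weight^4$ does not act diagonally on the kernels $Z_{h,m}$, and one needs the three-term recurrence for the Jacobi polynomials to show it is almost diagonal with diagonal entries $\gamma^\rightarrow_{h,m}\sim((m+1)/(h+1))^2$, an interpolation argument to majorise it by its diagonal part, and then a careful count of the pairs $(h,m)$ with $\lambda^\opL_{h,m}$ in a spectral window, using the factorisation $\lambda^\opL_{h,m}/4=(h-m+n)(m+n-1)-n(n-1)$; the constraint $\alpha<3$ enters precisely to make the final sum converge. None of this appears in your outline, so as written the argument would only reproduce the known, non-sharp result.
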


Note that $L^1$-boundedness of $F(\opL)$ in general does not hold under the assumptions of the previous theorem. However we can recover $L^1$-boundedness in the case $F$ is compactly supported.

\begin{thm}\label{thm:maincpt}
If $F : \R \to \C$ is a continuous function supported in $[-1,1]$ and $\|F\|_{L^2_s} < \infty$ for some $s > (4n-1)/2$, then the operator $F(t\opL)$ is bounded on $L^p(\Sphere)$ for all $t \in (0,\infty)$ and $p \in [1,\infty]$, and moreover
\[
\sup_{t > 0} \|F(t\opL)\|_{L^p \to L^p} \leq C_{s,p} \, \|F\|_{L^2_s}.
\]
\end{thm}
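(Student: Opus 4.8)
The plan is to derive Theorem~\ref{thm:maincpt} from Theorem~\ref{thm:mainMH} by a standard argument that upgrades a Mihlin--H\"ormander weak-type $(1,1)$ estimate to an $L^1$ (hence, by interpolation and duality, $L^p$ for all $p \in [1,\infty]$) bound for compactly supported multipliers. The key structural input is that $\opL$ generates a heat semigroup satisfying Gaussian-type heat kernel bounds and that $(\Sphere,\dist,\sigma)$ is a doubling metric measure space of homogeneous type, where $\dist$ is the Carnot--Carath\'eodory distance associated with $H\Sphere$; on a compact sub-Riemannian manifold with bracket-generating horizontal distribution these facts are classical. Under these hypotheses, a multiplier $F$ supported in $[-1,1]$ with $\|F\|_{L^2_s} < \infty$ for $s > Q/2$ (where here the relevant exponent $(4n-1)/2$ plays the role of the ``dimension'') gives an operator $F(t\opL)$ whose kernel satisfies, uniformly in $t \in (0,\infty)$, weighted $L^1$ estimates of Plancherel--Cotlar type, from which $L^1$-boundedness follows.

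The key steps, in order, are as follows. First I would record the finite propagation speed property for the wave operator $\cos(r\sqrt{\opL})$ associated with $\opL$: the kernel of $\cos(r\sqrt\opL)$ is supported in the $r$-neighbourhood of the diagonal with respect to $\dist$. This is where the sub-Riemannian (rather than Riemannian) nature of $\opL$ enters, but it holds in the generality of sub-Laplacians on manifolds. Second, writing $F(t\opL) = G_t(\sqrt{t\opL})$ with $G_t$ even and supported in $[-1,1]$, I would use the Fourier inversion formula to express $F(t\opL)$ as an average of wave operators $\cos(r\sqrt{t\opL})$ against $\widehat{G_t}(r)$, so that the kernel $K_{F(t\opL)}(x,y)$ is supported in $\{\dist(x,y) \le \sqrt t\}$. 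Third, combining this support restriction with the $L^2$ bound $\|F(t\opL)\|_{L^2\to L^2} \le \|F\|_\infty \lesssim \|F\|_{L^2_s}$ and the volume lower bound $\sigma(B_{\dist}(x,r)) \gtrsim r^{4n-1}$ for $r \le 1$ (so that $L^2$-balls can be converted to $L^1$-balls via Cauchy--Schwarz), one obtains the weighted estimate
\[
\int_\Sphere |K_{F(t\opL)}(x,y)| \, (1 + t^{-1/2}\dist(x,y))^{s} \, d\sigma(y) \le C_s \, \|F\|_{L^2_s},
\]
uniformly in $x$ and $t$; this is the standard ``single-annulus'' or ``Mauceri--Meda'' estimate, proved by decomposing $G_t$ dyadically in frequency and using the finite-speed support together with Plancherel in the $L^2_s$ norm. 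Since $s > (4n-1)/2$, the weight is integrable against the measure $d\sigma(y)$ near the diagonal, and summing over a dyadic decomposition of $\Sphere$ into annuli around $x$ yields $\sup_x \int_\Sphere |K_{F(t\opL)}(x,y)| \, d\sigma(y) \le C_{s} \|F\|_{L^2_s}$, i.e.\ uniform $L^\infty \to L^\infty$ and (by symmetry of the kernel, $F$ being scalar) $L^1 \to L^1$ boundedness. Interpolating with the trivial $L^2$ bound gives $L^p$ for $p \in [1,\infty]$.

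An alternative, arguably cleaner, route is to invoke Theorem~\ref{thm:mainMH} directly: since $F$ is supported in $[-1,1]$, one has $\|F(t\cdot)\|_{L^2_{s,\sloc}} \lesssim \|F\|_{L^2_s}$ uniformly in $t \le 1$ (and for $t \ge 1$ one can exploit compactness of $\Sphere$, e.g.\ the discreteness of the spectrum of $\opL$, so that $F(t\opL)$ is a bounded-rank-type operator with norm controlled by $\|F\|_\infty$), whence Theorem~\ref{thm:mainMH} gives $L^p$-boundedness for $p \in (1,\infty)$ with the stated uniform bound; the endpoints $p \in \{1,\infty\}$ are then handled by the finite-propagation-speed argument sketched above, which only uses the compact support of $F$ and not the full Mihlin--H\"ormander machinery. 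Either way, the main obstacle is establishing the weighted $L^1$ kernel estimate with the sharp threshold $s > (4n-1)/2$: one must be careful that the relevant ``dimension'' governing both the volume growth of Carnot--Carath\'eodory balls at small scales \emph{and} the Plancherel measure for $\opL$ is $4n-1$ (the topological dimension of $\Sphere$), not the homogeneous dimension $Q = 4n+2$ of the horizontal distribution; this is a genuinely delicate point and is presumably where the detailed spherical harmonic analysis developed elsewhere in the paper is used to pin down the precise growth of the spectral projections.
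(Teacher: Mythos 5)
Your overall strategy (finite propagation speed, Fourier decomposition of $F(t\opL)$ into wave operators, Cauchy--Schwarz on annuli to convert weighted $L^2$ kernel bounds into $L^1$ kernel bounds) is indeed the skeleton of the paper's argument, which reduces both theorems to Proposition \ref{prp:est} via the abstract machinery of \cite{cowling_spectral_2001,casarino_spectral_sphere}. However, the quantitative step on which your proof rests is false, and the gap is exactly at the sharp threshold. You claim the volume lower bound $\sigma(B(x,r)) \gtrsim r^{4n-1}$ for the Carnot--Carath\'eodory balls; in fact $\sigma(B(x,r)) \sim r^{4n+2}$ for small $r$ (Lemma \ref{lem:useful_estimates}\ref{en:balls_estimate}), so the lower bound you want fails by a factor $r^3$, and the dimension governing both the volume growth of sub-Riemannian balls and the unweighted Plancherel measure for $\opL$ (the number of eigenvalues of $\opL$ below $N^2$, counted with multiplicity, is $\sim N^{4n+2}$) is the homogeneous dimension $4n+2$, not the topological dimension $4n-1$. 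Running your Cauchy--Schwarz argument with the correct bounds $\sigma(B(y,2^k\sqrt t)) \lesssim (2^k\sqrt t)^{4n+2}$ and $\|K_{F(t\opL)}(\cdot,y)\|_2^2 \lesssim t^{-(2n+1)}\|F\|_\infty^2$ yields convergence of the dyadic sum only for $s > (4n+2)/2$ --- precisely the non-sharp exponent that the paper points out is ``relatively straightforward'' to obtain from the general results of \cite{hebisch_functional_1995,cowling_spectral_2001,duong_plancherel-type_2002}.

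The missing idea is the \emph{weighted} Plancherel estimate of Proposition \ref{prp:est}\ref{en:est_plancherel}, with the weight $\weight(x,y) = \sqrt{1-|\langle x,y\rangle|^2}$, which is not comparable to a power of $\dist(x,y)$: it vanishes on the entire $3$-dimensional quaternionic Hopf fibre through $x$, not just at $x$. Inserting $\weight^{\alpha/2}\weight^{-\alpha/2}$ into the Cauchy--Schwarz step and using parts \ref{en:est_invweight} and \ref{en:est_plancherel} of Proposition \ref{prp:est} replaces the exponent $4n+2$ by $4n+2-\alpha$ on both sides, and since $\alpha$ may be taken arbitrarily close to $3$ (the corank of the horizontal distribution) this lowers the threshold to $(4n-1)/2$. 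Proving that weighted estimate is the real content of the paper: it requires the explicit zonal kernels $Z_{h,m}$ (Proposition \ref{prp:Qzonalharm}), the three-term recurrence showing that multiplication by $\weight^4$ acts almost diagonally on the $Z_{h,m}$ with diagonal coefficients $\gamma^\rightarrow_{h,m} \sim ((m+1)/(h+1))^2$ (Proposition \ref{prp:mult_coefficients}), the interpolation inequality of Proposition \ref{prp:interpol_ineq}, and a lattice-point count over the hyperbola-like level sets of $\lambda^\opL_{h,m}$ in which the restriction $\alpha<3$ is exactly what makes the final sum converge. None of this is supplied, or correctly anticipated, by your sketch; your closing remark does flag the issue, but attributes the exponent $4n-1$ to the ball volumes, which is the opposite of what happens.
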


Consequently, via complex interpolation, we immediately obtain an $L^p$ boundedness result for the Bochner--Riesz means associated to the sub-Laplacian $\opL$.

\begin{cor}
For all $p \in [1,\infty]$ and $\alpha > (4n-2)|1/2-1/p|$, the Bochner--Riesz means $(1-t\opL)^\alpha_+$ are bounded on $L^p(\Sphere)$ uniformly in $t \in [0,\infty)$.
\end{cor}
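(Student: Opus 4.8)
The plan is to deduce the Bochner--Riesz estimate from Theorem~\ref{thm:maincpt} by a standard complex interpolation argument in the exponent $\alpha$. First I would fix $p \in [1,\infty]$ and, by duality and the case $\alpha=0$ being trivial (the function $(1-\lambda)^0_+ = \mathbbm{1}_{[-1,1]}$ is a bounded function, hence $F(t\opL)$ is bounded on $L^2$ — or rather, one works on the two endpoint lines of the strip), reduce to proving two facts: that for $\mathrm{Re}\,\alpha > (4n-1)/2$ the operators $(1-t\opL)^\alpha_+$ are bounded on $L^1(\Sphere)$ (and on $L^\infty$, by taking adjoints, since the multiplier is real-valued) uniformly in $t$ and with norm growing at most polynomially in $|\mathrm{Im}\,\alpha|$, and that for $\mathrm{Re}\,\alpha = 0$ they are bounded on $L^2(\Sphere)$ with at most polynomial growth in $|\mathrm{Im}\,\alpha|$. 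The first fact follows from Theorem~\ref{thm:maincpt}: the function $F_\alpha(\lambda) = (1-\lambda)^\alpha_+$ is supported in $[-1,1]$ and a direct computation shows $\|F_\alpha\|_{L^2_s} \leq C_{s} (1+|\mathrm{Im}\,\alpha|)^{s}$ whenever $s < \mathrm{Re}\,\alpha + 1/2$, so choosing $s \in ((4n-1)/2, \mathrm{Re}\,\alpha + 1/2)$ does the job (such $s$ exists precisely when $\mathrm{Re}\,\alpha > (4n-2)/2$). The second fact is immediate from the spectral theorem, since $\|F_{it}\|_{L^\infty(\R)} = 1$.

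Next I would set up Stein's analytic interpolation theorem with the analytic family $T_z = (1-t\opL)^{\gamma(z)}_+$, where $z \mapsto \gamma(z)$ is a suitable affine map sending the strip $\{0 \le \mathrm{Re}\,z \le 1\}$ to the strip $\{0 \le \mathrm{Re}\,\gamma \le \alpha_0\}$ with $\alpha_0$ slightly larger than $(4n-1)/2$; analyticity of $z \mapsto \langle T_z f, g\rangle$ for $f, g$ simple functions follows from the spectral theorem and dominated convergence, and the growth bounds on the boundary lines are the two facts established above. Interpolating between the $L^1 \to L^1$ (equivalently $L^\infty \to L^\infty$) bound on one edge and the $L^2 \to L^2$ bound on the other yields $L^p \to L^p$ boundedness of $(1-t\opL)^{\alpha}_+$ with $\alpha = (1-\theta)\cdot 0 + \theta \alpha_0$ for the $\theta$ with $1/p = (1-\theta)/2 + \theta/1$ (or the dual relation for $p > 2$), i.e.\ $\theta = 2|1/2 - 1/p|$, giving $\alpha = \alpha_0 \cdot 2|1/2-1/p|$. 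Letting $\alpha_0 \downarrow (4n-1)/2$ covers all $\alpha > (4n-1)\cdot|1/2-1/p| = (4n-2)|1/2-1/p| + |1/2-1/p|$; a slightly more careful bookkeeping, using that the constraint on $s$ in Theorem~\ref{thm:maincpt} is strict on both sides, lets one reach every $\alpha > (4n-2)|1/2-1/p|$.

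I do not expect any serious obstacle here: the argument is the classical Stein--Fefferman scheme for deducing Bochner--Riesz bounds from a Mihlin--H\"ormander multiplier theorem, and the only points requiring a little care are (i) the Sobolev-norm estimate $\|(1-\cdot)^\alpha_+ \chi\|_{L^2_s} \lesssim (1+|\mathrm{Im}\,\alpha|)^s$, which is a routine computation with the Fourier transform of a compactly supported power (one splits near and away from the singularity at the endpoint), and (ii) checking the hypotheses of Stein interpolation, in particular the admissible growth of the operator norms along vertical lines, which the polynomial-in-$|\mathrm{Im}\,\alpha|$ bounds supply. The uniformity in $t$ is automatic since all the multiplier-norm bounds are $t$-independent.
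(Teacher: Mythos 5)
Your argument is correct and is exactly the complex-interpolation deduction from Theorem~\ref{thm:maincpt} that the paper intends (the paper offers no more detail than ``via complex interpolation''): the $L^1$/$L^\infty$ endpoint for $\Re\alpha>(4n-2)/2$ comes from $\|(1-\cdot)^{\alpha}_+\|_{L^2_s}<\infty$ for $s<\Re\alpha+1/2$, the $L^2$ endpoint from the spectral theorem, and Stein interpolation with admissible growth in $\Im\alpha$ gives the stated range. Only cosmetic points: since $\opL\geq 0$ one should replace $(1-\lambda)^\alpha_+$ (supported in $(-\infty,1]$, not $[-1,1]$) by $(1-\lambda)^\alpha_+\eta(\lambda)$ for a smooth $\eta$ equal to $1$ on $[0,\infty)$ and supported in $[-1,\infty)$ before invoking Theorem~\ref{thm:maincpt}, and the strip should be mapped to $\{0\leq\Re\gamma\leq\alpha_0\}$ with $\alpha_0$ slightly above $(4n-2)/2$ rather than $(4n-1)/2$ --- which is what your ``more careful bookkeeping'' amounts to and what your first paragraph already establishes.
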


One reason of interest of the above results is that the critical index $(4n-1)/2$ in the statements of Theorems \ref{thm:mainMH} and \ref{thm:maincpt} is sharp, in the sense that it cannot be replaced by any smaller number.

Indeed it would be relatively straightforward to derive from the general results of \cite{hebisch_functional_1995,cowling_spectral_2001,duong_plancherel-type_2002} a weaker version of Theorems \ref{thm:mainMH} and \ref{thm:maincpt}, where the $L^2$ Sobolev norm is replaced by an $L^\infty$ Sobolev norm and the critical index $(4n-1)/2$ is replaced by $(4n+2)/2$. Here the value $4n+2$ is the ``local dimension'' associated with the sub-Riemannian structure on $\Sphere$; more precisely, if $\dist$ is the sub-Riemannian (or Carnot--Carath\'eodory) distance function on $\Sphere$, then
\[
\sigma(B(x,r)) \simeq \min \{1, r^{4n+2}\}
\]
for all $x \in \Sphere$ and $r \in (0,\infty)$, where $B(x,r) = \{y \in \Sphere \tc \dist(x,y) < r \}$ denotes the sub-Riemannian ball of centre $x$ and radius $r$ (see Proposition \ref{prp:est}\ref{en:est_invweight} below). The fact that the local dimension associated with $\dist$ is strictly larger than the topological dimension $4n-1$ of the manifold $\Sphere$ is connected with the lack of ellipticity of the sub-Laplacian $\opL$ \cite{fefferman_subelliptic_1983}.

Note that the statements of Theorems \ref{thm:mainMH} and \ref{thm:maincpt} remain true (and sharp) when the sub-Laplacian $\opL$ is replaced by the Laplace--Beltrami operator $\opLB$ on $\Sphere$. However the results for $\opLB$ are particular instances of more general results for elliptic operators on compact manifolds \cite{seeger_boundedness_1989}.
In contrast to the elliptic case,
the problem of obtaining analogous sharp results for sub-Laplacians, with a similar degree of generality to \cite{seeger_boundedness_1989}, appears to be still wide open.

Sharp multiplier theorems for sub-Laplacians are known in a few particular cases. Among these, the results of \cite{cowling_spectral_2011} for a distinguished sub-Laplacian on the unit sphere in $\C^k$ are a natural predecessor of ours. When $k = 2n$, the sphere considered in \cite{cowling_spectral_2011} coincides with our $\Sphere$ as a manifold; however here we study a different sub-Laplacian, associated with a different sub-Riemannian structure. Indeed in \cite{cowling_spectral_2011} the horizontal distribution has corank $1$, rather than $3$. Actually, our result is the first that we are aware of that applies to a sub-Laplacian on a compact sub-Riemannian manifold of corank greater than $1$.

Another case where sharp multiplier theorems are known is that of homogeneous sub-Laplacians on certain classes of
2-step
stratified groups \cite{hebisch_multiplier_1993,mller_spectral_1994,martini_heisenbergreiter,martini_further,martini_necessary}.
Homogeneous sub-Laplacians on stratified groups are of particular relevance, in that they serve as ``local models'' for more general sub-Laplacians on sub-Riemannian manifolds (in much the same way as the Euclidean Laplace operator is a local model for second-order elliptic operators on manifolds). Indeed the sub-Laplacian on the sphere in $\C^k$ studied in \cite{cowling_spectral_2011} is locally modelled on a homogeneous sub-Laplacian on the $(2k-1)$-dimensional Heisenberg group $H_{k-1}$, while the sub-Laplacian $\opL$ on $\Sphere$ considered here corresponds to a homogeneous sub-Laplacian on the $(4n-1)$-dimensional quaternionic Heisenberg group $\Quat H_{n-1}$ (see \cite[Theorem 2.7]{astengo_cayley_2004}). In particular,  by means of a transplantation argument \cite[Section 5]{martini_crsphere}, the sharpness of Theorems \ref{thm:mainMH} and \ref{thm:maincpt} may be derived from the results of \cite{martini_necessary}.

The basic approach to the proof of Theorems \ref{thm:mainMH} and \ref{thm:maincpt} will follow the scheme of \cite{cowling_spectral_2011}. Namely, since $\opL$ has finite propagation speed with respect to $\dist$ \cite{melrose_propagation_1986,cowling_subfinsler_2013},
Theorems \ref{thm:mainMH} and \ref{thm:maincpt} may be reduced, by means of general results proved in \cite{cowling_spectral_2001} (see also \cite[Theorems 2.1 and 2.2]{casarino_spectral_sphere}), to a certain set of estimates, that are listed in Proposition \ref{prp:est} below.

To state the required estimates, it is convenient to introduce some notation. For all Borel functions $F : \R \to \C$ supported in $[0,1]$ and all $N \in \N \setminus \{0\}$, define
\[
\|F\|_{N,2} = \left( \frac{1}{N} \sum_{k=1}^N \sup_{\lambda \in [(k-1)/N,k/N]} |F(\lambda)|^2 \right)^{1/2}
\]
(see \cite[eq.\ (2.5)]{cowling_spectral_2001}). Moreover, for all bounded operators $T$ on $L^2(\Sphere)$, denote by $K_T$ the distributional Schwartz kernel of $T$, formally viewed as an integral kernel, so that
\[
\langle T f, g \rangle_{L^2(\Sphere)} = \int_{\Sphere} \int_{\Sphere} K_{T}(x,y) \, f(y) \, \overline{g(x)} \,d\sigma(x) \,d\sigma(y)
\]
for all $f,g \in C^\infty(\Sphere)$ when the kernel is indeed a function; in general the double integral in the right-hand side is intended in the sense of distributions.

\begin{prp}\label{prp:est}
Let $\weight : \Sphere \times \Sphere \to \R$ be defined by
\begin{equation}\label{eq:weight}
\weight(x,y) = \sqrt{1-|\langle x,y \rangle|^2}
\end{equation}
for all $x,y \in \Sphere$. Then the following estimates hold.
\begin{enumerate}[label=(\roman*)]
\item\label{en:est_invweight} For all $\alpha \in [0,3)$, all $x \in \Sphere$ and all $r \in (0,\infty)$,
\[
\int_{B(x,r)} \weight(x,y)^{-\alpha} \,d\sigma(y) \leq C_\alpha \min\{r^{4n+2-\alpha}, 1\}.
\]
\item\label{en:est_plancherel} For all $\alpha \in [0,3)$, all $N \in \N \setminus \{0\}$, and all bounded Borel functions $F : \R \to \C$ vanishing outside $[0,N)$,
\[
\esssup_{y \in \Sphere} \int_\Sphere |K_{F(\sqrt{\opL})}(x,y)|^2 \, \weight(x,y)^\alpha \,d\sigma(x) \leq C_\alpha \, N^{4n+2-\alpha} \|F(N \cdot)\|_{N,2}^2.
\]
\item\label{en:est_ondiagonal} For all sufficiently large $\ell \in \N$, and for all $x \in \Sphere$ and $r \in (0,\infty)$,
\[
\sigma(B(x,r))^{1/2} \| (1+r^2\opL)^{-\ell} \|_{L^2 \to L^\infty} \leq C_\ell.
\]
\item\label{en:est_doubling} For all $x \in \Sphere$ and $r \in (0,\infty)$,
\[
\sigma(B(x,2r)) \leq C \sigma(B(x,r)).
\]
\end{enumerate}
\end{prp}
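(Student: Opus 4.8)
The four estimates are handled in turn: \ref{en:est_doubling} is the case $\alpha=0$ of \ref{en:est_invweight}, estimate \ref{en:est_invweight} rests on the sub-Riemannian geometry of $\Sphere$, and the quaternionic spherical harmonic decomposition --- the main technical apparatus of this paper --- is reserved for \ref{en:est_ondiagonal} and, above all, for the key estimate \ref{en:est_plancherel}. Throughout we use the transitivity of $\QSym$ on $\Sphere$ to place the distinguished point at the north pole $e_1=(1,0,\dots,0)$, for which $\langle e_1,z\rangle=\overline{z_1}$ and $\weight(e_1,z)^2=1-|z_1|^2=|z'|^2$, writing $z=(z_1,z')\in\Quat\times\Quat^{n-1}$. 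The geometric input for \ref{en:est_invweight} is the comparison
\[
\dist(e_1,z)^2\sim\weight(e_1,z)^2+|\Im\langle e_1,z\rangle|=|z'|^2+|\Im z_1|
\]
for $z$ in a fixed neighbourhood of $e_1$, with $\dist(e_1,z)\sim1$ otherwise: here $\dist(e_1,z)\gtrsim\weight(e_1,z)$ because $z\mapsto\weight(e_1,z)$ is $1$-Lipschitz for $\dist$ and vanishes at $e_1$, $\dist(e_1,z)^2\gtrsim|\Im\langle e_1,z\rangle|$ because the vertical increment is controlled by the square of the horizontal length, and the opposite inequalities follow by exhibiting short horizontal curves (or by transplanting known estimates on the quaternionic Heisenberg group $\Quat H_{n-1}$ through the Cayley transform of \cite{astengo_cayley_2004}). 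Integrating in the coordinates $(\Im z_1,z')\in\R^3\times\R^{4n-4}$, in which $\sigma$ has bounded density near $e_1$, then gives for small $r$
\[
\int_{B(e_1,r)}\weight(e_1,z)^{-\alpha}\,d\sigma(z)\lesssim\int_{|z'|\lesssim r}|z'|^{-\alpha}\Bigl(\int_{|\Im z_1|\lesssim r^2}d(\Im z_1)\Bigr)dz'\lesssim(r^2)^3\,r^{4n-4-\alpha}=r^{4n+2-\alpha},
\]
the $z'$-integral converging since $\alpha<3<4n-4$; since $\int_\Sphere\weight(e_1,z)^{-\alpha}\,d\sigma(z)<\infty$ for the same reason, this proves \ref{en:est_invweight}, and the case $\alpha=0$ gives $\sigma(B(x,r))\sim\min\{r^{4n+2},1\}$, which is \ref{en:est_doubling}.

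For \ref{en:est_ondiagonal} and \ref{en:est_plancherel} we use that the commuting operators $\opL$ and $\opMS$ (equivalently $\opL$ and $\opLB=\opL+\opMS$) are simultaneously diagonalised by the quaternionic spherical harmonic decomposition $L^2(\Sphere)=\osum_{p,q}\QHarm{p}{q}$, with explicitly computable dimensions $\dim\QHarm{p}{q}$ and $\opL$-eigenvalues $\lambda_{p,q}$, and that the orthogonal projection onto $\QHarm{p}{q}$ has a $\QSym$-invariant zonal kernel $Z_{p,q}(x,y)$ --- an explicit Jacobi-type polynomial in $\Re\langle x,y\rangle$ and $|\langle x,y\rangle|^2$ with $Z_{p,q}(x,x)=\dim\QHarm{p}{q}$. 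For \ref{en:est_ondiagonal}, the diagonal of the kernel of $(1+r^2\opL)^{-2\ell}$ is the constant $\sum_{p,q}(1+r^2\lambda_{p,q})^{-2\ell}\dim\QHarm{p}{q}$, and this equals $\|(1+r^2\opL)^{-\ell}\|_{L^2\to L^\infty}^2$; for $\ell$ large it is dominated by $1+r^{-(4n+2)}\sim\sigma(B(x,r))^{-1}$, by the growth of $\dim\QHarm{p}{q}$ and $\lambda_{p,q}$ --- equivalently, because the dimension-weighted counting function $\#\{\lambda_{p,q}\le\Lambda\}$ grows like $\Lambda^{(4n+2)/2}$, as dictated by the local dimension. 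This yields \ref{en:est_ondiagonal}.

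Estimate \ref{en:est_plancherel} is the crux. With $K_{F(\sqrt\opL)}(x,y)=\sum_{p,q}F(\sqrt{\lambda_{p,q}})\,Z_{p,q}(x,y)$, the case $\alpha=0$ reduces by orthogonality of the $\QHarm{p}{q}$ to
\[
\sum_{p,q}|F(\sqrt{\lambda_{p,q}})|^2\dim\QHarm{p}{q}\lesssim N^{4n+2}\,\|F(N\cdot)\|_{N,2}^2,
\]
the standard Plancherel-type estimate, which follows from the eigenvalue counting above and the design of the norm $\|\cdot\|_{N,2}$ (see \cite{cowling_spectral_2001}). For $\alpha\in(0,3)$ one exploits that multiplication of the kernel by $\weight(x,y)^2=1-|\langle x,y\rangle|^2$ is governed by the ladder operators $\ladderU,\ladderD,\ladderR$ relating adjacent $\QHarm{p}{q}$, acting on the decomposition as a finite-order difference operator in $(p,q)$; a summation-by-parts argument then reduces, for each $k\in\N$, the $\weight^{2k}$-weighted version of the estimate to the unweighted one applied to modified data, each factor $\weight^2$ lowering the power of $N$ by $2$. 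Analytic (Stein) interpolation in $\alpha$ extends this to all $\alpha\in[0,3)$ with exponent $4n+2-\alpha$; the restriction $\alpha<3$ reflects the three-dimensional vertical space.

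The principal obstacle is \ref{en:est_plancherel}, and within it the construction and fine analysis of the quaternionic spherical harmonics: pinning down the eigenspaces $\QHarm{p}{q}$ with their dimensions, eigenvalues and ladder operators, writing the zonal kernels $Z_{p,q}$ explicitly, and --- most delicately --- obtaining asymptotics of the associated Jacobi-type polynomials and of their $\weight^\alpha$-weighted $L^2$ norms that are sharp enough to produce the exponent $4n+2-\alpha$ after summation. The corank-$3$ feature of $H\Sphere$, namely a genuinely three-dimensional vertical space carrying the action of $\QSymL$ (rather than the one-dimensional centre of the CR sphere treated in \cite{cowling_spectral_2011}), is what makes both the underlying representation theory and these special-function estimates substantially heavier here, and is why a self-contained, elementary derivation of the decomposition is developed.
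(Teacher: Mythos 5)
Parts \ref{en:est_invweight}, \ref{en:est_ondiagonal} and \ref{en:est_doubling} of your sketch follow the paper's argument essentially verbatim: reduction to the north pole by $\QSym$-invariance, the identification $\weight(e,x)=|x'|$, the comparison $\dist(x,e)\sim|x'|+|\Im x_1|^{1/2}$ in adapted local coordinates, and, for \ref{en:est_ondiagonal}, the identity $\|(1+r^2\opL)^{-\ell}\|_{L^2\to L^\infty}^2=\sum_{(h,m)\in\IQ}\dim\QHarm{h}{m}\,(1+r^2\lambda^{\opL}_{h,m})^{-2\ell}$ followed by eigenvalue counting. (For \ref{en:est_doubling} note that the $\alpha=0$ case of \ref{en:est_invweight} alone gives only an upper bound on $\sigma(B(x,r))$; you do assert the two-sided $\sigma(B(x,r))\sim\min\{r^{4n+2},1\}$, which the same coordinate computation indeed yields, so this is fine.)

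The genuine gap is in \ref{en:est_plancherel}, in the passage from $\alpha=0$ to $\alpha\in(0,3)$. You propose to establish the weighted estimate at the even integers $\alpha=2k$ for each $k\in\N$ (``each factor $\weight^2$ lowering the power of $N$ by $2$'') and then interpolate. But the bound $N^{4n+2-\alpha}$ is claimed (and is sharp) only for $\alpha<3$, so the endpoint $\alpha=4$ that you would need in order to reach the subrange $(2,3)$ by interpolation is not available, while interpolating between $\alpha=0$ and $\alpha=2$ covers only $[0,2]$. The paper avoids this by interpolating a different, weaker statement: the majorization $\|\weight(\cdot,y)^{\beta}K(\cdot,y)\|_2\le\|M^{\beta}K(\cdot,y)\|_2$ for kernel polynomials $K$, where $M$ acts diagonally on the $Z_{h,m}$ with weights $(5\gamma^{\rightarrow}_{h,m})^{1/4}$ and $\gamma^{\rightarrow}_{h,m}\sim((m+1)/(h+1))^2$ is the diagonal entry of the pentadiagonal matrix of multiplication by $\weight^4$. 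This is proved for $\beta\in[0,2]$ (the endpoint $\beta=2$ by splitting $\IQ$ into residue classes of $m$ mod $5$ so that the off-diagonal part of $\weight^4$ becomes orthogonal to each piece, plus Cauchy--Schwarz) and is applied with $\beta=\alpha/2<3/2$, so no forbidden endpoint arises. The restriction $\alpha<3$ then enters only at the final counting step: writing $\lambda^{\opL}_{h,m}/4=ab-n(n-1)$ with $a=h-m+n$, $b=m+n-1$, the number of $b$ per $a$ on the shell $\lambda^{\opL}_{h,m}\in[(j-1)^2,j^2)$ is $O(j/a)$, and $\sum_{a\lesssim j^2}a^{2-\alpha}=O(j^{6-2\alpha})$ precisely because $2-\alpha>-1$, which yields $j^{4n+1-\alpha}$ per shell. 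This quantitative step --- the size of the diagonal coefficients $\gamma^{\rightarrow}_{h,m}$ together with the hyperbola lattice-point count --- is the heart of \ref{en:est_plancherel} and is missing from your sketch; no asymptotics of the Jacobi-type polynomials themselves are needed, only the three-term recurrence that produces the band structure, and the summation-by-parts/interpolation scheme as you state it does not close.
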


The proof of Proposition \ref{prp:est} may be found in Section \ref{section:multipliers} below. As in \cite{cowling_spectral_2011}, the ``weighted Plancherel-type estimate'' for $\opL$, appearing as part \ref{en:est_plancherel} of Proposition \ref{prp:est}, is the most demanding. Its proof requires a careful analysis of the spectral decomposition of $\opL$, which is developed throughout the paper.

Since $\opL$ and the Laplace--Beltrami operator $\opLB$ commute, the spectral decomposition of $\opL$ may be obtained by refining and recombining the spectral decomposition of $\opLB$. The latter is nothing else than the well-known decomposition into spherical harmonics, that is, the decomposition of $L^2(\Sphere)$ into spaces of homogeneous harmonic polynomials.

A similar observation holds true for complex spheres. In \cite{cowling_spectral_2011} a decomposition into ``complex spherical harmonics'' is considered, that refines the classical (or ``real'') spherical harmonic decomposition on the unit sphere in $\C^k$ and yields the joint spectral decomposition of the Laplace--Beltrami operator and the sub-Laplacian studied there. This complex spherical harmonic decomposition may be easily described in terms of ``complex homogeneity'', once polynomials on $\C^k \cong \R^{2k}$ are represented as polynomials in the ``complex indeterminates'' $z_1,\dots,z_k,\bar z_1,\dots,\bar z_k$.
Namely, the space of homogeneous harmonic polynomials of a given degree $h \in \N$ is decomposed into spaces of $(p,q)$-bihomogeneous polynomials, where $p+q = h$ and $p$ and $q$ denote the degrees with respect to the ``holomorphic indeterminates'' $z_1,\dots,z_k$ and the ``antiholomorphic indeterminates'' $\bar z_1,\dots,\bar z_k$ respectively.

One of the main difficulties in dealing with the quaternionic case is that there does not seem to be a comparably straightforward way of describing the ``quaternionic spherical harmonic decomposition'' (that is, the joint spectral decomposition of $\opL$ and $\opLB$) in terms of homogeneity properties of polynomials, as in the real and complex cases. In addition, despite the fact that $\C$ embeds into $\Quat$ as a subfield and
 $\Quat^n$ may be identified with $\C^{2n}$, the quaternionic spherical harmonic decomposition
is not itself a refinement of the complex spherical harmonic decomposition of $L^2(\Sphere)$ resulting from this identification. In other words, the passage from the complex case to the quaternionic case is substantially different from the passage from the real case to the complex case.

Nevertheless, as it turns out, the complex and quaternionic decompositions are compatible (that is, they admit a common refinement). More is true: the action of the differential operators $T_\Qi,T_\Qj,T_\Qk$ on the space of polynomials on $\Quat^n$ defines a representation of the Lie algebra $\lie{su}(2)$ which ``intertwines'' the two decompositions.
This makes it
 possible to derive a sufficiently detailed description of the quaternionic decomposition from the already known properties of the complex decomposition.
 Below we outline such an approach to the quaternionic spherical harmonic decomposition, which is developed in full detail in the thesis of the first-named author \cite{ahrens_masterarbeit_2016}.

An important role in our analysis is naturally played by invariance properties with respect to certain isometries of the sphere. As a matter of fact, the aforementioned real, complex and quaternionic spherical harmonic decompositions correspond to the decomposition of the space of square-integrable functions on the sphere into irreducible representations of certain groups of isometries of the sphere, and may be subsumed in the analysis of the compact Gelfand pairs $(\group{O}(d),\group{O}(d-1))$, $(\group{U}(k),\group{U}(k-1))$, $(\group{Sp}(n) \times \group{Sp}(1),\group{Sp}(n-1) \times \group{Sp}(1))$ respectively. In particular, several of the properties of the quaternionic spherical harmonic decomposition that we present below may be deduced from more general results about the representation theory of the group $\Sp(n) \times \Sp(1)$ and, as such, may be found elsewhere in the literature (see, for example, \cite{pajas_degenerate_1968,johnson_composition_1977,casarino_joint_eigenfunction}).

In contrast, the approach described here does not rely heavily on representation theory, except perhaps a few elementary and well-known facts about $\lie{su}(2)$. Therefore our presentation is likely to be more readily accessible to a wider audience. Moreover, despite the lack of an evident notion of ``quaternionic homogeneity'', here the quaternionic spherical harmonic decomposition is derived as a byproduct of a more general decomposition of the space of all polynomials on $\Quat^n$ (see Section \ref{section:qsh}).
This differs from the previous approaches of \cite{pajas_degenerate_1968,johnson_composition_1977}, which focus on functions on the sphere and zonal harmonics, and instead is consistent with the homogeneity-based approaches to the real and complex cases. For all these reasons, the elementary approach to quaternionic spherical harmonics presented below may be of independent interest.

\subsection*{Notation}
The letter $C$ and variants such as $C_s$ denote constants, always assumed to be positive, which may vary from one occurrence to the next. The expressions $a \simeq b$ and
$a \lesssim b$ mean that there are constants $C$ and $C'$ such that $Ca \leq b \leq C' a$ and $a \leq Cb$ respectively.
$\N$ denotes the set of natural numbers, including $0$.

\subsection*{Post scriptum}
After this paper was submitted, the results of \cite{martini_necessary2} were discovered; these show that half the topological dimension is a lower bound for the Mihlin--H\"ormander critical index for any sub-Laplacian on a sub-Riemannian manifold of arbitrary step. In particular, the sharpness of Theorems \ref{thm:mainMH} and \ref{thm:maincpt} follows directly from \cite{martini_necessary2}.

\section{Quaternionic spherical harmonics}\label{section:qsh}

The Riemannian structure on $\Sphere$ is clearly rotation-invariant, that is, it is invariant under the natural action of the orthogonal group $\RSym$ on $\Quat^n \cong \R^{4n}$. In fact $\RSym$ may be viewed as the group of $\R$-linear automorphisms of $\Quat^n$ that preserve the real inner product $\Re \langle \cdot,\cdot \rangle$. In particular, the Laplace--Beltrami operator $\opLB$ is $\RSym$-invariant.

The sub-Riemannian structure determined by $H\Sphere$ has a smaller symmetry group. Indeed an arbitrary element of $\RSym$ need not preserve the quaternionic inner product $\langle \cdot, \cdot \rangle$ on $\Quat^n$. However two subgroups of $\RSym$ that preserve the sub-Riemannian structure are easily identified. One is the compact symplectic group $\QSymR$, that is, the group of the $\Quat$-linear elements of $\RSym$. The elements of $\QSymR$ preserve the quaternionic inner product.  The other is the group, that we denote by $\QSymL$, of $\R$-linear transformations of $\Quat^n$ given by left multiplication by unit quaternions, that is, of the form
\[
\Quat^n \ni x \mapsto c x \in \Quat^n,
\]
where $c \in \Quat$ and $|c| = 1$. Not all the elements of $\QSymL$ preserve the quaternionic inner product; however, they do preserve orthogonality with respect to it.

Recall that $\Quat^n$ has been given the structure of a left $\Quat$-module. Since multiplication in $\Quat$ is not commutative, the elements of $\QSymL$ need not be $\Quat$-linear. On the other hand, the elements of $\QSymR$ are $\Quat$-linear and therefore commute with the elements of $\QSymL$, by definition of $\Quat$-linearity.

Note that, if we consider the elements of $\Quat^n$ as row vectors, then $\Quat$-linear endomorphisms of $\Quat^n$ may be represented by $n \times n$ matrices with coefficients in $\Quat$, acting by right multiplication.
In this matrix representation, elements of $\QSymR$ correspond to quaternionic matrices whose rows form an orthonormal $\Quat$-basis of $\Quat^n$. From this it follows easily that $\QSymR$ acts transitively on $\Sphere$, and that $\QSymL$ is isomorphic to (the opposite group of) the group $\Sp(1)$ of $\Quat$-linear isometries of $\Quat^1$.
We remark that, in the sequel, the quaternionic matrix representation of $\QSymR$ will not be used, and, for all $x \in \Quat^n$, we will write $Ax$ to denote the action on $x$ of any element $A$ of $\QSymR$ --- or, more generally, any element $A$ of $\RSym$.

From the above considerations, it is immediate that the subgroup $\QSym$ of $\RSym$ generated by $\QSymL$ and $\QSymR$ is compact.
Moreover, since $\QSym$ preserves orthogonality with respect to the quaternionic inner product on $\Quat^n$, the horizontal distribution $H\Sphere$ is $\QSym$-invariant as well. Consequently both $\opL$ and $\opMS$ are $\QSym$-invariant.

Note that, for all $\Qu \in \{\Qi,\Qj,\Qk\}$,
\begin{equation}\label{eq:QexpTfields}
T_\Qu f(x) = \left.\frac{d}{dt}\right|_{t=0} f(\exp(-t\mathbbm{u}) x),
\end{equation}
where $\exp : \Quat \to \Quat$ denotes the quaternionic exponential map.
Since $\opLB$ is $\QSymL$-invariant, each of the vector fields $T_\Qi$, $T_\Qj$, $T_\Qk$ commutes with $\opLB$, and in particular
$\opLB$ and $\opMS = -(T_\Qi^2+T_\Qj^2+T_\Qk^2)$ commute as well. Hence the analysis of the spectral decomposition of $\opL = \opLB - \opMS$ may be reduced to that of the joint spectral decomposition of $\opLB$ and $\opMS$.

The main result of this section is the description of this joint spectral decomposition, which is stated in the proposition below. The set of indices
\[
\IQ = \{ (h,m) \in \N^2 \tc 2m \leq h \}
\]
will be of use in this description.

\begin{prp}\label{prp:quatharmonics}
There is a Hilbert space orthogonal direct sum decomposition
\begin{equation}\label{eq:quaternionicharmonics}
L^2(\Sphere) = \osum_{(h,m) \in \IQ} \QHarm{h}{m}
\end{equation}
with the following properties.
\begin{enumerate}[label=(\roman*)]
\item\label{en:quatharmonics_dim} $\QHarm{h}{m}$ is $\QSym$-invariant and finite-dimensional, and
\[
\dim \QHarm{h}{m} = \frac{(h-2m+1)^2 (h+2n-1)}{(2n-2)(2n-1)} \binom{h-m+2n-2}{2n-3} \binom{m+2n-3}{2n-3}.
\]

\item\label{en:quatharmonics_eigen} The elements of $\QHarm{h}{m}$ are joint eigenfunctions of $\opLB$ and $\opMS$ of eigenvalues
\[
\lambda_{h,m}^\opLB = h(h+4n-2), \qquad \lambda_{h,m}^\opMS = (h-2m)(h-2m+2)
\]
respectively. In particular, they are eigenfunctions of $\opL$ of eigenvalue
\[
\lambda_{h,m}^\opL = 4m(h-m+1) + 4(n-1)h.
\]
\end{enumerate}
\end{prp}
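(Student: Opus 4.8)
The plan is to obtain the decomposition \eqref{eq:quaternionicharmonics} by refining the classical (real) spherical harmonic decomposition $L^2(\Sphere) = \osum_{h \in \N} \hHarm{h}$, where $\hHarm{h}$ is the space of restrictions to $\Sphere$ of the harmonic polynomials on $\R^{4n} \cong \Quat^n$ that are homogeneous of degree $h$; recall that $\hHarm{h}$ is $\RSym$-invariant, finite-dimensional, and consists of eigenfunctions of $\opLB$ of eigenvalue $h(h+4n-2)$. The crucial point is that, by \eqref{eq:QexpTfields} and the quaternion relations \eqref{eq:quaternioncommutation}, the vector fields $T_\Qi, T_\Qj, T_\Qk$ span a Lie algebra isomorphic to $\lie{su}(2)$ which acts on polynomials and preserves each $\hHarm{h}$, and that $\opMS = -(T_\Qi^2 + T_\Qj^2 + T_\Qk^2)$ is (a suitable normalisation of) the associated Casimir operator; hence each $\hHarm{h}$ is a finite-dimensional $\lie{su}(2)$-module, and $\opMS$ acts as a scalar on each of its isotypic components. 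One then defines $\QHarm{h}{m}$, for $(h,m) \in \IQ$, to be the isotypic component of $\hHarm{h}$ for the irreducible $\lie{su}(2)$-representation of dimension $h-2m+1$ (equivalently, of highest weight $h-2m$); summing over $h$ and over $0 \le m \le h/2$ gives \eqref{eq:quaternionicharmonics}, and orthogonality is automatic, since distinct $\hHarm{h}$ correspond to distinct eigenvalues of the self-adjoint operator $\opLB$, while the subspaces $\QHarm{h}{m} \subseteq \hHarm{h}$, being isotypic components of a unitary action, are mutually orthogonal.

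To set up this dictionary and extract the quantitative information, I would first identify $\Quat^n$ with $\C^{2n}$ by writing each coordinate $x_j \in \Quat$ as $z_j + w_j \Qj$ with $z_j, w_j \in \C \defeq \R + \R\Qi$; under this identification, left multiplication by $\Qi$ becomes complex scalar multiplication by $\Qi$ on $\C^{2n}$, whereas left multiplication by $\Qj$ and $\Qk$ becomes $\C$-antilinear. Consequently $T_\Qi$ generates the diagonal circle action on $\C^{2n}$, so it is diagonalised by the complex bihomogeneous spherical harmonic decomposition $\hHarm{h} = \osum_{p+q=h} \CHarm{p}{q}$, acting on $\CHarm{p}{q}$ as the scalar $-\Qi(p-q)$; thus the $\CHarm{p}{q}$ with $p+q=h$ are exactly the weight spaces of $\hHarm{h}$ (for the Cartan element $-\Qi T_\Qi$, the space $\CHarm{p}{q}$ having weight $p-q$), while $T_\Qj$ and $T_\Qk$, which shift the bidegree by $(-1,+1)$ or $(+1,-1)$ and commute with $\opLB$, play the role of raising and lowering operators. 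The structure theory of $\lie{su}(2)$-modules now applies: the multiplicity of the irreducible of highest weight $h-2m$ in $\hHarm{h}$ is the difference of consecutive weight multiplicities, namely $\dim \CHarm{h-m}{m} - \dim \CHarm{h-m+1}{m-1}$, whence $\dim \QHarm{h}{m} = (h-2m+1)\bigl(\dim \CHarm{h-m}{m} - \dim \CHarm{h-m+1}{m-1}\bigr)$. Inserting the known dimension formula for $\CHarm{p}{q}$ on $\C^{2n}$ and simplifying the resulting difference of products of binomial coefficients yields the formula in part~\ref{en:quatharmonics_dim}.

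For part~\ref{en:quatharmonics_eigen}, the $\opLB$-eigenvalue $h(h+4n-2)$ is inherited from $\hHarm{h}$, while the $\opMS$-eigenvalue on $\QHarm{h}{m}$ is the scalar by which the Casimir $\opMS$ acts on the irreducible of highest weight $k = h-2m$; this scalar depends only on $k$ and is pinned down once and for all by evaluating $\opMS$ on a single explicit element of one copy of that irreducible, for instance the harmonic polynomial $x \mapsto z_1^{k}$, which lies in $\CHarm{k}{0} \subseteq \QHarm{k}{0}$. A direct computation using the explicit first-order expressions for $T_\Qi, T_\Qj, T_\Qk$ gives $\opMS(z_1^{k}) = k(k+2)\, z_1^{k}$, so $\lambda^\opMS_{h,m} = (h-2m)(h-2m+2)$; then $\opL = \opLB - \opMS$ gives $\lambda^\opL_{h,m} = h(h+4n-2) - (h-2m)(h-2m+2) = 4m(h-m+1) + 4(n-1)h$ by elementary algebra. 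Finally, the $\QSym$-invariance asserted in part~\ref{en:quatharmonics_dim} holds because $\QHarm{h}{m}$ is $\lie{su}(2)$-invariant, hence $\QSymL$-invariant by connectedness of $\QSymL$, and because the $\QSymR$-action commutes with left multiplications and preserves $\hHarm{h}$, hence preserves each of its isotypic components.

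The main obstacle is the structural identification underlying the whole scheme: verifying that $T_\Qi, T_\Qj, T_\Qk$ generate a copy of $\lie{su}(2)$ on the space of polynomials, that the complex harmonic spaces $\CHarm{p}{q}$ are precisely its weight spaces inside $\hHarm{h}$, and --- most delicately for the eigenvalue formula --- fixing the correct normalisation of $\opMS$ relative to this $\lie{su}(2)$, which is why its Casimir acts as $k(k+2)$ rather than a different multiple. Once this dictionary is established, the remaining work --- the binomial-coefficient manipulations behind the dimension formula and the bookkeeping of eigenvalues --- is routine, if somewhat lengthy.
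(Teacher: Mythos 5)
Your proposal is correct and rests on the same key mechanism as the paper's proof: the $\lie{su}(2)$-action generated by $T_\Qi,T_\Qj,T_\Qk$ with $\opMS$ as its Casimir, the complex bihomogeneous spaces as weight spaces for $iT_\Qi$, and the standard multiplicity count by differences of consecutive weight multiplicities. The only organisational difference is that you apply this count directly to $\hHarm{h}$, obtaining $\dim\QHarm{h}{m}=(h-2m+1)\bigl(\dim\CHarm{h-m}{m}-\dim\CHarm{h-m+1}{m-1}\bigr)$, whereas the paper first decomposes the polynomial spaces $\hPoly{h}$ into the $\opMS$-eigenspaces $\QPoly{h}{m}$ and then intersects with harmonic polynomials via $\QPoly{h}{m}=\QHarm{h}{m}\oplus|\cdot|^2\QPoly{h-2}{m-1}$ --- the two computations agree, and the paper's detour through $\QPoly{h}{m}$ is motivated by its later use in the zonal harmonics section rather than by any need in this proposition.
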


Note that $T_\Qi,T_\Qj,T_\Qk$ do not commute with one another. Indeed
from the commutation relations \eqref{eq:quaternioncommutation} between $\Qi,\Qj,\Qk$ it follows that
\begin{equation}\label{eq:su2commutation}
[T_\Qi,T_\Qj] = 2 T_\Qk, \qquad [T_\Qj,T_\Qk] = 2 T_\Qi, \qquad [T_\Qk,T_\Qi] = 2 T_\Qj.
\end{equation}
However these relations imply that each of $T_\Qi,T_\Qj,T_\Qk$ commutes with $\opMS$. Hence it is possible to relate the joint spectral decomposition of $\opLB,\opMS$ and that of $\opLB,T_\Qi$ via their common refinement, that is, the joint spectral decomposition of $\opLB,\opMS,T_\Qi$.

As we shall see, in order to describe the joint spectral decomposition of $\opLB,T_\Qi$, it is useful to consider $\Quat^n$ as a complex vector space. Note however that there is more than one complex structure on $\Quat^n$. Indeed every $\Qu \in \{\Qi,\Qj,\Qk\}$ may be viewed as a complex structure on $\Quat^{n}$, because $\Qu^2 = -1$: namely, $\Qu$ induces the structure of a $\C$-vector space on $\Quat^{n}$, where multiplication by the imaginary unit $i \in \C$ corresponds to left multiplication by $\Qu$. This complex structure is orthogonal, in the sense that multiplication by $i$ is a linear isometry of $\Quat^n$. Moreover, with respect to this complex structure, $T_\Qu$ is the same as the vector field $z \mapsto -iz$, that is,
\begin{equation}\label{eq:complex_structure_diff}
T_\Qu f(z) = \left.\frac{d}{dt}\right|_{t=0} f(e^{-it}z).
\end{equation}

The various spectral decompositions that we are interested in will be described in terms of spaces of polynomials on $\Quat^n$. Hence it is convenient to consider extensions of the differential operators on $\Sphere$ introduced so far. Extend the Laplace--Beltrami operator $\opLB$ to a differential operator on $\Quat^n \setminus \{0\}$ as follows: for a smooth function $f$ on $\Quat^n \setminus \{0\}$, $\opLB f(x)$ is defined by applying $\opLB$ to the restriction of $y \mapsto f(|x| \, y)$ to the sphere $\Sphere$ and evaluating the result at $x/|x|$. If $\opGL$ is the usual (nonnegative) Laplace operator on $\Quat^n \cong \R^{4n}$, then the well-known formula for the Laplacian in spherical coordinates gives that
\begin{equation}\label{eq:sphericallaplacian}
\opLB = |\cdot|^2 \opGL + \opDg^2 +(4n-2) \opDg,
\end{equation}
where $\opDg$ is the Euler operator (or degree operator) given by
\[
\opDg f(x) = \left.\frac{d}{dt}\right|_{t=1} f(tx),
\]
and $|\cdot|^2$ is the multiplication operator given by
\[
(|\cdot|^2 f)(x) = |x|^2 f(x).
\]
Moreover the operators $T_\Qi$, $T_\Qj$, $T_\Qk$ are naturally extended to differential operators on $\Quat^n$ (indeed the formulas \eqref{eq:unitfields} define global vector fields on $\Quat^n$), hence the same holds for $\opMS = -\sum_{\Qu \in \{\Qi,\Qj,\Qk\}} T_\Qu^2$.

Let $\Poly$ denote the space of (complex valued) polynomial functions on $\Quat^n \cong \R^{4n}$. Any system $\xi_1,\dots,\xi_{4n}$ of real orthonormal coordinates on $\Quat^n$ may be used as a system of indeterminates for $\Poly$; in other words, any element of $\Poly$ may be uniquely written in the form $\sum_{\alpha \in \N^{4n}} c_\alpha \xi^\alpha$ for some coefficients $c_\alpha \in \C$ (all but finitely many of which are zero), where $\xi^\alpha = \xi_1^{\alpha_1} \cdots \xi_{4n}^{\alpha_{4n}}$ for all multiindices $\alpha \in \N^{4n}$.

For a polynomial $p = \sum_{\alpha \in \N^{4n}} c_\alpha \xi^\alpha \in \Poly$, let $p(\partial)$ denote the constant-coefficient differential operator $\sum_{\alpha \in \N^{4n}} c_{\alpha} \partial^\alpha_\xi$, where $\partial_{\xi}^\alpha = \partial_{\xi_1}^{\alpha_1} \cdots \partial_{\xi_{4n}}^{\alpha_{4n}}$. Note that the operator $p(\partial)$ does not depend on the choice of orthonormal coordinates.

For any given orthogonal complex structure on $\Quat^n$, one may choose real orthonormal coordinates $\xi_1,\dots,\xi_{4n}$ in such a way that the expressions $z_j = \xi_{2j-1} + i \xi_{2j}$ define $\C$-linear functionals on $\Quat^n$ for $j=1,\dots,2n$. If we set $\bar z_j = \xi_{2j-1} - i \xi_{2j}$ for $j=1,\dots,2n$, then every polynomial $p \in \Poly$ may be uniquely written in the form $p = \sum_{\alpha,\beta \in \N^{2n}} c_{\alpha,\beta} z^\alpha \bar z^\beta$ for some coefficients $c_{\alpha,\beta} \in \C$  (all but finitely many of which are zero), where $z^\alpha = z_1^{\alpha_1} \cdots z_{2n}^{\alpha_{2n}}$ and $\bar z^\beta = \bar z_1^{\beta_1} \cdots \bar z_{2n}^{\beta_{2n}}$. The system $z_1,\dots,z_{2n},\bar z_1,\dots,\bar z_{2n}$ will be called a system of complex indeterminates for $\Poly$ compatible with the given complex structure.

Note that each of the operators $\opGL,\opDg,T_\Qi,T_\Qj,T_\Qk,\Gamma,|\cdot|^2$ maps $\Poly$ into $\Poly$.

\begin{lem}[{see \cite[\S IV.2]{stein_introduction_1971}}]\label{lem:polyinnerproduct}
Let $B : \Poly \times \Poly \to \C$ be the sesquilinear form on $\Poly$ defined by
\[
B(p,q) = p(\partial) \overline{q} (0) .
\]
Then $B$ is a (positive definite, hermitian) inner product on $\Poly$. With respect to this inner product, the operators $\opMS, \opDg$ are self-adjoint, the operators $T_\Qi,T_\Qj,T_\Qk$ are skew-adjoint, and the operator $|\cdot|^2$ is the adjoint of $-\opGL$. Moreover each of the operators $T_\Qi,T_\Qj,T_\Qk,\Gamma,|\cdot|^2\Delta$ commutes with $\opDg$.
\end{lem}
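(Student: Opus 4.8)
The plan is to base everything on one elementary computation carried out in an arbitrary system of real orthonormal coordinates $\xi_1,\dots,\xi_{4n}$ for $\Poly$, together with the coordinate-free fact that the form $B$ does not depend on the chosen orthonormal coordinates. First I would record that, since $\overline{\xi^\beta}=\xi^\beta$ and $\partial_\xi^\alpha(\xi^\beta)|_{0}=\alpha!\,\delta_{\alpha\beta}$, sesquilinearity of $B$ yields
\[
B\Bigl(\sum_\alpha c_\alpha\xi^\alpha,\ \sum_\beta d_\beta\xi^\beta\Bigr)=\sum_\alpha \alpha!\,c_\alpha\overline{d_\alpha}.
\]
This makes the first assertion immediate: $B$ is hermitian and positive definite, the monomials $\xi^\alpha$ form a $B$-orthogonal basis with $\|\xi^\alpha\|_B^2=\alpha!$, and, since distinct total degrees $|\alpha|$ force $\xi^\alpha\perp_B\xi^\beta$, the grading of $\Poly$ by homogeneity degree is $B$-orthogonal.

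Next I would treat the operators acting diagonally on this grading. On the space of homogeneous polynomials of degree $k$ the Euler operator $\opDg$ is multiplication by the real scalar $k$; hence $\opDg$ is self-adjoint, and moreover \emph{any} operator preserving the grading commutes with $\opDg$. This settles the last assertion: $T_\Qi,T_\Qj,T_\Qk$ and $\opMS=-(T_\Qi^2+T_\Qj^2+T_\Qk^2)$ preserve the grading because the vector fields \eqref{eq:unitfields} have linear (hence degree-preserving) coefficients, while $|\cdot|^2\opGL$ preserves it because $|\cdot|^2$ raises degree by $2$ and $\opGL$ lowers it by $2$ (equivalently $[\opDg,|\cdot|^2]=2|\cdot|^2$ and $[\opDg,\opGL]=-2\opGL$). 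For the adjointness statements, the same monomial computation gives $B(\partial_{\xi_j}\xi^\alpha,\xi^\beta)=\alpha_j(\alpha-e_j)!\,\delta_{\alpha-e_j,\beta}=\alpha!\,\delta_{\alpha,\beta+e_j}=B(\xi^\alpha,\xi_j\xi^\beta)$, so multiplication by $\xi_j$ is the adjoint of $\partial_{\xi_j}$; squaring and summing over $j$ shows that $|\cdot|^2$ is the adjoint of $\sum_j\partial_{\xi_j}^2=-\opGL$, as claimed.

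It remains to see that $T_\Qi,T_\Qj,T_\Qk$ are skew-adjoint, whence $\opMS$ is self-adjoint, being minus a sum of squares of skew-adjoint operators. Here I would exploit that $B$ is $\RSym$-invariant: if $R\in\RSym$ acts on $\Poly$ by $(R\cdot p)(x)=p(R^{-1}x)$, then passing from $\xi$ to the rotated coordinates is a change of orthonormal coordinate system under which the displayed formula for $B$ is unchanged, so $B(R\cdot p,R\cdot q)=B(p,q)$. Since left multiplication by a unit quaternion is an $\R$-linear isometry of $\Quat^n\cong\R^{4n}$, the one-parameter group $x\mapsto e^{-t\Qu}x$ lies in $\RSym$ and therefore acts on $(\Poly,B)$ by unitaries; by \eqref{eq:QexpTfields} its infinitesimal generator is $T_\Qu$, which is thus skew-adjoint. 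I do not anticipate a genuine obstacle: the only points requiring care are the intrinsic, coordinate-independent nature of $p\mapsto p(\partial)$ — which underlies both the well-definedness of $B$ and its $\RSym$-invariance — and the routine bookkeeping of signs and of the order of factors when passing to adjoints of composite operators. All of the underlying facts are classical; see \cite[\S IV.2]{stein_introduction_1971}.
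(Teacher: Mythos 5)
Your proof is correct, and most of it runs along the same lines as the paper's: the monomial computation $B(\xi^\alpha,\xi^\beta)=\alpha!\,\delta_{\alpha\beta}$, the diagonality of $\opDg$ on the homogeneity grading, and the degree-counting argument for the commutation statements are identical in substance. The one genuine divergence is the skew-adjointness of $T_\Qi,T_\Qj,T_\Qk$. The paper proves it by introducing complex indeterminates $z,\bar z$ compatible with the complex structure $\Qu$ and checking that $T_\Qu$ is diagonal on the $B$-orthogonal basis $\{z^\alpha\bar z^\beta\}$ with purely imaginary eigenvalues $i(|\beta|-|\alpha|)$; this explicit diagonalization (equation \eqref{eq:complex_eigen}) is then reused later to identify the $\ladderR$-eigenvalues on the spaces $\CPoly{p}{q}$, so the paper's computation does double duty. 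You instead invoke the $\RSym$-invariance of $B$ (a consequence of the coordinate-independence of $p\mapsto p(\partial)$) and observe that $T_\Qu$ generates the one-parameter unitary group $f\mapsto f(e^{-t\Qu}\,\cdot\,)$ on each finite-dimensional graded piece, whence skew-adjointness by differentiating $B(U_tp,U_tq)=B(p,q)$ at $t=0$. This is perfectly rigorous and arguably more conceptual — it makes clear that skew-adjointness is forced by isometry invariance alone and requires no choice of complex structure — but it yields only the qualitative conclusion, not the eigenvalue formula that the paper extracts from the same computation. Your treatment of the adjointness of $|\cdot|^2$ and $-\opGL$ via the elementary relation between $\partial_{\xi_j}$ and multiplication by $\xi_j$ is a minor and harmless variant of the paper's one-line identity $(|\cdot|^2p)(\partial)=-\opGL\,p(\partial)$.
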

\begin{proof}
Let $(\xi_1,\dots,\xi_{4n})$ be real orthonormal coordinates on $\Quat^n$. It is easily seen that
\[
B(\xi^\alpha,\xi^{\alpha'}) = \delta_{\alpha,\alpha'} \, \alpha!
\]
for all $\alpha,\alpha' \in \N^{4n}$. Since $\{\xi^\alpha\}_{\alpha \in \N^{4n}}$ is a basis of $\Poly$, this shows that the sesquilinar form $B$ is hermitian and positive definite, and that $\{\xi^\alpha/\sqrt{\alpha!}\}_{\alpha \in \N^{4n}}$ is an orthonormal basis with respect to $B$.

With respect to this orthonormal basis, the operator $\opDg$ is diagonal, with nonnegative eigenvalues, since
\begin{equation}\label{eq:degree_eigen}
\opDg \xi^\alpha = |\alpha| \, \xi^\alpha,
\end{equation}
where $|\alpha| = \alpha_1 + \dots + \alpha_{4n}$ is the length of the multiindex $\alpha$.
Hence $\Theta$ is self-adjoint, and its eigenspaces in $\Poly$ correspond to the subspaces of homogeneous polynomials.

Note also that
\[
B(|\cdot|^2 p, q) = (|\cdot|^2 p)(\partial) \overline{q}(0) = -\opGL p(\partial)\overline{q} (0) = p(\partial) \overline{(-\opGL q)} (0) = B(p,-\opGL q)
\]
for all $p,q \in \Poly$, whence $|\cdot|^2$ is the adjoint of $-\opGL$.

Take now $\Qu \in \{\Qi,\Qj,\Qk\}$. Introduce complex indeterminates $z_1,\dots,z_{2n}$, $\bar z_1,\dots,\bar z_{2n}$ for $\Poly$ which are compatible with the orthogonal complex structure $\Qu$ on $\Quat^n$. Then
\[
B(z^\alpha \bar z^{\beta}, z^{\alpha'} \bar z^{\beta'}) = \delta_{\alpha,\alpha'} \, \delta_{\beta,\beta'} \, 2^{4n} \, \alpha! \, \beta!,
\]
so $\{z^\alpha \bar z^{\beta} / \sqrt{2^{4n} \alpha! \beta!}\}_{\alpha,\beta \in \N^{2n}}$ is an orthonormal basis of $\Poly$ with respect to $B$. Moreover $T_\Qu$ is diagonal with respect to this basis, with purely imaginary eigenvalues, since, by \eqref{eq:complex_structure_diff},
\begin{equation}\label{eq:complex_eigen}
T_\Qu (z^\alpha \bar z^{\beta}) = i(|\beta|-|\alpha|) z^\alpha \bar z^{\beta}.
\end{equation}
Hence $T_\Qu$ is skew-adjoint with respect to $B$.

Consequently $\opMS = -\sum_{\Qu \in \{\Qi,\Qj,\Qk\}} T_\Qu^2$
is self-adjoint with respect to $B$.

Clearly each the operators $T_\Qi,T_\Qj,T_\Qk,\Gamma,|\cdot|^2\Delta$ preserves homogeneity and degree of polynomials; in other words, each of them preserves the eigenspaces of $\opDg$, and therefore commutes with $\opDg$.
\end{proof}

The previous lemma allows us to recover immediately a few basic results about the classical decomposition in spherical harmonics (see, for example, \cite{stein_introduction_1971,axler_harmonic_2001}).

Let $\Harm$ be the subspace of $\Poly$ of harmonic polynomials, that is, the $p \in \Poly$ such that $\opGL p = 0$.
For all $h \in \N$, let $\hPoly{h}$ denote the subspace of $\Poly$ of polynomials that are homogeneous of degree $h$, and set $\hHarm{h} = \Harm \cap \hPoly{h}$. Clearly
\[
\dim \hPoly{h} = \binom{h+4n-1}{4n-1}.
\]
By \eqref{eq:degree_eigen}, the decomposition
\[
\Poly = \bigoplus_{h \in \N} \hPoly{h}
\]
corresponds to the decomposition of $\Poly$ into eigenspaces of $\opDg$; since $\opDg$ and $|\cdot|^2 \opGL$ commute, we have the corresponding decomposition
\[
\Harm = \bigoplus_{h \in \N} \hHarm{h}.
\]
From the identity \eqref{eq:sphericallaplacian}, it follows immediately that the elements of $\hHarm{h}$ are eigenfunctions of $\opLB$ of eigenvalue $h(h+4n-2)$.

For notational convenience we set $\hPoly{h} = \hHarm{h} = \{0\}$ for $h \in \Z \setminus \N$. Then
\begin{equation}\label{eq:polyharm}
\hPoly{h} = \hHarm{h} \oplus |\cdot|^2 \hPoly{h-2}
\end{equation}
for all $h \in \Z$: indeed $|\cdot|^2 : \hPoly{h-2} \to \hPoly{h}$ is the adjoint of $-\opGL : \hPoly{h} \to \hPoly{h-2}$ with respect to the inner product $B$ defined in Lemma \ref{lem:polyinnerproduct}, so $\hPoly{h}$ may be written as the direct sum of the kernel $\hHarm{h}$ of $-\opGL$ and the range of its adjoint. In particular
\[\begin{split}
\dim \hHarm{h} &= \dim \hPoly{h} - \dim \hPoly{h-2} \\
&= \frac{2h+4n-2}{4n-2} \binom{h+4n-3}{4n-3}.
\end{split}\]

Denote by $\Poly|_\Sphere,\Harm|_\Sphere,\hHarm{h}|_\Sphere$ the sets of restrictions to $\Sphere$ of elements of $\Poly,\Harm,\hHarm{h}$ respectively.
Iteration of \eqref{eq:polyharm} shows that $\Poly|_\Sphere = \Harm|_\Sphere$. In particular, by the Stone--Weierstra\ss\ theorem, $\Harm|_\Sphere$ is dense in $L^2(\Sphere)$. Moreover, by the maximum principle for harmonic functions, each element of $\Harm$ is uniquely determined by its restriction to $\Sphere$. Hence
\begin{equation}\label{eq:realdecomposition_restricted}
\Harm|_\Sphere = \bigoplus_{h \in \N} \hHarm{h}|_\Sphere.
\end{equation}
Since $\opLB$ is self-adjoint on $L^2(\Sphere)$ and the eigenvalues $h(h+4n-2)$ are distinct, the spaces $\hHarm{h}|_\Sphere$ are mutually orthogonal in $L^2(\Sphere)$. Taking the closure in $L^2(\Sphere)$ of \eqref{eq:realdecomposition_restricted} then yields the direct sum decomposition
\begin{equation}\label{eq:realharmonics}
L^2(\Sphere) = \osum_{h \in \N} \hHarm{h}|_\Sphere,
\end{equation}
which is the spectral decomposition of $\opLB$.

Due to the injectivity of the restriction map $\Harm \ni f \mapsto f|_\Sphere \in L^2(\Sphere)$, henceforth we shall generally identify subspaces of $\Harm$ with the corresponding subspaces of $L^2(\Sphere)$ and omit the restriction notation.

Introduce the ladder operators
\[
\ladderR = i T_\Qi, \qquad \ladderU = iT_\Qj - T_\Qk, \qquad \ladderD = iT_\Qj+T_\Qk.
\]
It is worth remarking that $\opMS$, $\ladderR$, $\ladderU$, $\ladderD$ commute with the degree operator $\opDg$. Therefore, as in the case of real spherical harmonics, we will first study the decomposition of $\Poly$ into joint eigenspaces of $\opDg,\opMS,\ladderR$, and then consider its intersection with $\Harm$ (where, by \eqref{eq:sphericallaplacian}, $\opLB$ and $\opDg^2 + (4n-2)\opDg$ coincide).

The decomposition of $\Poly$ into joint eigenspaces of $\opDg$ and $\ladderR$ is then easily obtained and well-known \cite{folland_tangential_1972,nagel_moebius_1976,cowling_spectral_2011}.
Take complex indeterminates $z_1,\dots,z_{2n},\bar z_1,\dots,\bar z_{2n}$ corresponding to the complex structure $\Qi$ on $\Quat^n$.
For all $p,q \in \N$, we may then define the space $\CPoly{p}{q}$ to be the space of bihomogeneous polynomials of degree $p$ in $z_1,\dots,z_{2n}$ and of degree $q$ in $\bar z_1,\dots,\bar z_{2n}$. Clearly
\begin{equation}\label{eq:dimCPoly}
\dim \CPoly{p}{q} = \binom{p+2n-1}{2n-1} \binom{q+2n-1}{2n-1}
\end{equation}
and
\begin{equation}\label{eq:bihomog}
\hPoly{h} = \bigoplus_{\substack{p,q \in \N\\ p+q=h}} \CPoly{p}{q}.
\end{equation}
Moreover, by \eqref{eq:complex_eigen}, every element of $\CPoly{p}{q}$ is an eigenfunction of $\ladderR$ of eigenvalue $\lambda^{\ladderR}_{p,q} = p-q$.

Correspondingly
\[
\hHarm{h} = \bigoplus_{\substack{p,q \in \N\\ p+q=h}} \CHarm{p}{q},
\]
where $\CHarm{p}{q} = \CPoly{p}{q} \cap \Harm$. This decomposition is orthogonal in $L^2(\Sphere)$, because the $\CHarm{p}{q}$ are contained in distinct eigenspaces of the self-adjoint operator $\ladderR$. So from \eqref{eq:realharmonics} it follows that
\[
L^2(\Sphere) = \osum_{p,q \in \N} \CHarm{p}{q}.
\]
Moreover from \eqref{eq:polyharm} we deduce that
\[
\CPoly{p}{q} = \CHarm{p}{q} \oplus |\cdot|^2 \CPoly{p-1}{q-1}
\]
(note that $\lambda^{\ladderR}_{p,q} = \lambda^{\ladderR}_{p-1,q-1}$ and $|\cdot|^2$ and $\ladderR$ commute, because $|\cdot|^2$ is rotation-invariant). In particular
\[\begin{split}
\dim \CHarm{p}{q} &= \dim \CPoly{p}{q} - \dim \CPoly{p-1}{q-1} \\
&= \frac{p+q+2n-1}{2n-1} \binom{p+2n-2}{2n-2} \binom{q+2n-2}{2n-2}.
\end{split}\]

We are now going to use the above information about the joint spectral decomposition of $\opDg$ and $\ladderR$ to give a precise description of the joint spectral decomposition of $\opDg$ and $\opMS$. The link between them is given by the following, well-known elementary results about the representation theory of the Lie algebra $\lie{su}(2)$.

\begin{lem}\label{lem:sl2rep}
Let $\VV$ be a minimal finite-dimensional $\{T_\Qi,T_\Qj,T_\Qk\}$-invari\-ant subspace of $\Poly$.
Then there exists a basis $v_0,\dots,v_{d-1}$ of $\VV$ such that
\begin{equation}\label{eq:sl2repT}
\ladderR v_j = (d-1-2j) v_j,\qquad
\ladderU v_j = 2 j(d-j) v_{j-1}, \qquad
\ladderD v_j = 2 v_{j+1},
\end{equation}
for $j=0,\dots,d-1$, where $v_{-1} = v_d = 0$, and in particular
\begin{equation}\label{eq:sl2repMS}
\opMS|_\VV = (d-1)(d+1) \id_{\VV}.
\end{equation}
\end{lem}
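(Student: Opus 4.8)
The statement is the classical structure theorem for finite-dimensional irreducible representations of $\lie{su}(2)$, applied to the representation of $\lie{su}(2)$ on $\Poly$ generated by $T_\Qi, T_\Qj, T_\Qk$ via the commutation relations \eqref{eq:su2commutation}. The natural approach is to work with the ladder operators $\ladderR, \ladderU, \ladderD$ and reconstruct the standard weight-space argument. First I would record the commutation relations among $\ladderR, \ladderU, \ladderD$ obtained from \eqref{eq:su2commutation}: a direct computation gives $[\ladderR, \ladderU] = 2\ladderU$, $[\ladderR, \ladderD] = -2\ladderD$, and $[\ladderU, \ladderD] = 4\ladderR$ (signs and constants to be checked against the definitions $\ladderR = iT_\Qi$, $\ladderU = iT_\Qj - T_\Qk$, $\ladderD = iT_\Qj + T_\Qk$). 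Thus $\ladderR$ plays the role of (a multiple of) the Cartan element, $\ladderU$ raises the $\ladderR$-eigenvalue by $2$, and $\ladderD$ lowers it by $2$.

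Next, since $\VV$ is finite-dimensional and $\ladderR$ preserves $\VV$, and since $\ladderR$ is (by Lemma \ref{lem:polyinnerproduct}, or rather by \eqref{eq:complex_eigen}) diagonalizable on $\Poly$ with integer eigenvalues, $\VV$ decomposes into $\ladderR$-eigenspaces with integer eigenvalues. Let $\mu$ be the largest such eigenvalue and pick a nonzero highest-weight vector $w \in \VV$ with $\ladderR w = \mu w$; then $\ladderU w = 0$ by maximality. Define $v_j$ recursively by $v_0 = w$ and $v_{j+1} = \tfrac12 \ladderD v_j$ (so that $\ladderD v_j = 2 v_{j+1}$ holds by construction). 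One then proves by induction on $j$, using the commutation relations, the two identities $\ladderR v_j = (\mu - 2j) v_j$ (immediate since $\ladderD$ lowers the weight by $2$) and $\ladderU v_j = c_j v_{j-1}$ for suitable constants $c_j$; pushing $[\ladderU, \ladderD] = 4\ladderR$ through the recursion yields $c_{j+1} = c_j + 2(\mu - 2j)$, hence by summation $c_j = 2j\mu - 2j(j-1) = 2j(\mu - j + 1)$. Since $\VV$ is finite-dimensional the chain $v_0, v_1, \dots$ must terminate: let $d$ be least with $v_d = 0$. Applying $\ladderU$ to $v_d = 0$ gives $0 = \ladderU v_d = c_d v_{d-1} = 2d(\mu - d + 1) v_{d-1}$ with $v_{d-1} \neq 0$, forcing $\mu = d - 1$. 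Substituting back gives exactly \eqref{eq:sl2repT} with $c_j = 2j(d-j)$. Minimality of $\VV$ then guarantees that $\{v_0, \dots, v_{d-1}\}$ spans $\VV$: the span of these vectors is nonzero and visibly invariant under $\ladderR, \ladderU, \ladderD$, hence under $T_\Qi, T_\Qj, T_\Qk$, so it equals $\VV$; linear independence follows since the $v_j$ lie in distinct $\ladderR$-eigenspaces.

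Finally, for \eqref{eq:sl2repMS}, I would express $\opMS = -(T_\Qi^2 + T_\Qj^2 + T_\Qk^2)$ in terms of the ladder operators. A short computation gives $\ladderU \ladderD = -4T_\Qj^2 - 4T_\Qk^2 + 4i[T_\Qj, T_\Qk]/\text{(const)}$ — more precisely one finds $-T_\Qj^2 - T_\Qk^2 = \tfrac14(\ladderU \ladderD + \ladderD \ladderU)$ and, combined with $-T_\Qi^2 = \ladderR^2$, an expression of the Casimir-type form $\opMS = \ladderR^2 + \tfrac14(\ladderU \ladderD + \ladderD \ladderU)$; alternatively $\opMS = \ladderR^2 - \ladderR + \tfrac12 \ladderD \ladderU + \dots$ adjusted by the $[\ladderU,\ladderD]$ relation. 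Evaluating any such expression on the highest weight vector $v_0$ (where $\ladderU v_0 = 0$, $\ladderR v_0 = (d-1)v_0$) immediately yields the scalar $(d-1)^2 + (d-1) = (d-1)d$ — wait, one must get $(d-1)(d+1) = d^2 - 1$, so the correct normalization of the Casimir is $\opMS = \ladderR^2 - ( \text{something}) + \ladderD\ladderU$ tuned so that on $v_0$ it reads $(d-1)^2 + 2(d-1) = d^2-1$; since $\opMS$ commutes with $T_\Qi, T_\Qj, T_\Qk$ (noted after \eqref{eq:su2commutation}) it acts as a scalar on the irreducible $\VV$ by Schur's lemma, and that scalar is determined by its value on $v_0$, giving \eqref{eq:sl2repMS}.

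The main obstacle is purely bookkeeping: getting all the constants and signs right in the commutation relations for $\ladderR, \ladderU, \ladderD$ from the slightly asymmetric definitions, and then correctly identifying the Casimir combination so that it evaluates to $(d-1)(d+1)$ rather than some nearby value. There is no conceptual difficulty — the only genuine structural input is that $\VV$ is finite-dimensional (forcing termination of the ladder) and minimal (forcing the $v_j$ to span), both of which are used exactly once.
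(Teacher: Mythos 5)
Your proof is correct and takes essentially the same approach as the paper: the paper simply observes that a minimal invariant subspace is an irreducible representation of $\lie{su}(2)$ and cites the standard highest-weight classification, which is exactly the argument you reconstruct (your commutation relations $[\ladderR,\ladderU]=2\ladderU$, $[\ladderR,\ladderD]=-2\ladderD$, $[\ladderU,\ladderD]=4\ladderR$ are right, and the existence of an $\ladderR$-eigenvector in $\VV$ is justified by \eqref{eq:complex_eigen}). The only slip is in the Casimir identity --- the correct normalisation is $\opMS=\ladderR^2+\tfrac12(\ladderU\ladderD+\ladderD\ladderU)$, not $\tfrac14$ --- but you flag the constant as tentative and your final evaluation on $v_0$ lands on the correct value $(d-1)(d+1)$.
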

\begin{proof}
Because of the commutation rules \eqref{eq:su2commutation}, a minimal $\{T_\Qi,T_\Qj,T_\Qk\}$-invariant subspace of $\Poly$ is an irreducible representation of the Lie algebra $\lie{su}(2)$, and the above description follows by the (elementary) characterisation of such representations (see, for example, \cite{hall_lie_2003}).
\end{proof}

\begin{lem}\label{lem:sl2rep_dec}
Let $\VV$ be a finite-dimensional $\{T_\Qi,T_\Qj,T_\Qk\}$-invari\-ant subspace of $\Poly$. Then
\begin{equation}\label{eq:MSdiag}
\VV = \bigoplus_{\ell \in \N} E^\VV_\opMS(\ell(\ell+2)) = \bigoplus_{m \in \Z} E^{\VV}_{\ladderR}(m),
\end{equation}
where, for all $\lambda \in \C$, $E^\VV_\opMS(\lambda)$ and $E^\VV_{\ladderR}(\lambda)$ denote the eigenspaces of $\opMS|_{\VV}$ and $\ladderR|_\VV$ of eigenvalue $\lambda$. Moreover, for all $m \in \Z$,
\begin{equation}\label{eq:Teigen}
E^{\VV}_{\ladderR}(m) = \bigoplus_{j \in \N} E^{\VV}_{\ladderR}(m) \cap E^\VV_\opMS((|m|+2j)(|m|+2j+2)),
\end{equation}
and correspondingly̧, for all $\ell \in \N$,
\begin{equation}\label{eq:MSeigen}
E^\VV_\opMS(\ell(\ell+2)) = \bigoplus_{j=0}^\ell E^\VV_\opMS(\ell(\ell+2)) \cap  E^\VV_{\ladderR}(\ell-2j).
\end{equation}
Moreover
\begin{align}
\label{eq:MSup} \ladderU &: E^\VV_\opMS(\ell(\ell+2)) \cap  E^\VV_{\ladderR}(\ell-2j) \to E^\VV_\opMS(\ell(\ell+2)) \cap  E^\VV_{\ladderR}(\ell-2(j-1)),\\
\label{eq:MSdown} \ladderD &: E^\VV_\opMS(\ell(\ell+2)) \cap  E^\VV_{\ladderR}(\ell-2(j-1)) \to E^\VV_\opMS(\ell(\ell+2)) \cap  E^\VV_{\ladderR}(\ell-2j)
\end{align}
are isomorphisms for all $j=1,\dots,\ell$ and $\ell \in \N$. In particular, for all $\ell \in \N$,
\begin{equation}\label{eq:MSdim}
\dim E^\VV_\opMS(\ell(\ell+2)) = (\ell+1) ( \dim  E^\VV_{\ladderR}(\ell) - \dim E^\VV_{\ladderR}(\ell+2)).
\end{equation}
\end{lem}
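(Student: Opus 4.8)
The plan is to reduce everything to the irreducible case already treated in Lemma~\ref{lem:sl2rep}. First I would decompose $\VV$ into minimal $\{T_\Qi,T_\Qj,T_\Qk\}$-invariant subspaces. Complete reducibility is available here with essentially no work: by Lemma~\ref{lem:polyinnerproduct} the operators $T_\Qi,T_\Qj,T_\Qk$ are skew-adjoint for the inner product $B$ on $\Poly$, so the $B$-orthogonal complement in $\VV$ of any $\{T_\Qu\}$-invariant subspace is again $\{T_\Qu\}$-invariant; since $\dim\VV<\infty$, an obvious induction yields $\VV=\bigoplus_k\VV_k$ with each $\VV_k$ a minimal --- hence, by \eqref{eq:su2commutation}, irreducible --- $\{T_\Qu\}$-invariant subspace. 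Set $\ell_k=\dim\VV_k-1$. Lemma~\ref{lem:sl2rep} then tells us that on $\VV_k$ the operator $\opMS$ acts as the scalar $\ell_k(\ell_k+2)$, while $\ladderR$ has simple spectrum $\{\ell_k,\ell_k-2,\dots,-\ell_k\}$, with the action of $\ladderU,\ladderD$ on the corresponding basis vectors $v_0^{(k)},\dots,v_{\ell_k}^{(k)}$ given by \eqref{eq:sl2repT}.

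From this I would read off \eqref{eq:MSdiag}--\eqref{eq:MSeigen}. Since $\ell\mapsto\ell(\ell+2)$ is strictly increasing on $\N$, the scalar $\ell_k(\ell_k+2)$ determines $\ell_k$, so $\opMS|_\VV$ is diagonalizable with eigenvalues among $\{\ell(\ell+2):\ell\in\N\}$ and $E^\VV_\opMS(\ell(\ell+2))=\bigoplus_{k:\,\ell_k=\ell}\VV_k$, while $\ladderR|_\VV$ is diagonalizable with eigenvalues in $\Z$; this is \eqref{eq:MSdiag}. A summand $\VV_k$ contributes to $E^\VV_{\ladderR}(m)$ exactly when $m\in\{\ell_k,\ell_k-2,\dots,-\ell_k\}$, i.e.\ when $\ell_k=|m|+2j$ for some $j\in\N$ (note $m\equiv|m|\pmod 2$), and then contributes the single line $\C v_j^{(k)}\subseteq E^\VV_\opMS((|m|+2j)(|m|+2j+2))$; summing over $k$ gives \eqref{eq:Teigen}, and keeping only those $k$ with a fixed value $\ell_k=\ell$ gives \eqref{eq:MSeigen}.

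For the ladder isomorphisms \eqref{eq:MSup}--\eqref{eq:MSdown}, note that $\ladderU,\ladderD$ are linear combinations of $T_\Qj,T_\Qk$ and hence commute with $\opMS$, while \eqref{eq:su2commutation} gives $[\ladderR,\ladderU]=2\ladderU$ and $[\ladderR,\ladderD]=-2\ladderD$, so $\ladderU$ raises and $\ladderD$ lowers the $\ladderR$-eigenvalue by $2$ and both preserve $\opMS$-eigenspaces --- precisely the mapping properties claimed. To see they are isomorphisms, write $E^\VV_\opMS(\ell(\ell+2))\cap E^\VV_{\ladderR}(\ell-2j)=\bigoplus_{k:\,\ell_k=\ell}\C v_j^{(k)}$ and use \eqref{eq:sl2repT} with $d=\ell+1$: $\ladderU v_j^{(k)}=2j(\ell+1-j)v_{j-1}^{(k)}$ with $2j(\ell+1-j)\neq 0$ for $1\le j\le\ell$, and $\ladderD v_{j-1}^{(k)}=2v_j^{(k)}$; thus on each $\VV_k$ both maps are isomorphisms between the relevant lines, and so are the direct sums over $k$.

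Finally \eqref{eq:MSdim} is a counting argument. Let $\nu_\ell$ be the number of indices $k$ with $\ell_k=\ell$. The previous step shows $\dim\bigl(E^\VV_\opMS(\ell(\ell+2))\cap E^\VV_{\ladderR}(\ell-2j)\bigr)=\nu_\ell$ for each $j\in\{0,\dots,\ell\}$, so \eqref{eq:MSeigen} gives $\dim E^\VV_\opMS(\ell(\ell+2))=(\ell+1)\nu_\ell$; while \eqref{eq:Teigen} (the $j$-th summand there having dimension $\nu_{|m|+2j}$) gives $\dim E^\VV_{\ladderR}(\ell)=\sum_{j\ge0}\nu_{\ell+2j}$ and $\dim E^\VV_{\ladderR}(\ell+2)=\sum_{j\ge1}\nu_{\ell+2j}$, whose difference is $\nu_\ell$. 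I do not expect a genuine obstacle: the only substantive inputs are complete reducibility and the explicit irreducible model of Lemma~\ref{lem:sl2rep}, and the one place requiring care is the bookkeeping --- parities, the ranges of $j$, the injectivity of $\ell\mapsto\ell(\ell+2)$, and correctly matching which irreducible summands $\VV_k$ feed into each intersection space $E^\VV_\opMS(\cdot)\cap E^\VV_{\ladderR}(\cdot)$.
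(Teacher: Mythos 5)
Your proposal is correct and follows essentially the same route as the paper: complete reducibility of $\VV$ via the skew-adjointness of $T_\Qi,T_\Qj,T_\Qk$ with respect to $B$, reduction to the irreducible model of Lemma~\ref{lem:sl2rep}, and a multiplicity count for \eqref{eq:MSdim} (your $\nu_\ell$ bookkeeping is just a slightly more explicit phrasing of the paper's dimension-difference argument via \eqref{eq:Teigen} and the isomorphisms \eqref{eq:MSup}). The only nitpick is that for $m<0$ the line that $\VV_k$ contributes to $E^{\VV}_{\ladderR}(m)$ is $\C v_{(\ell_k-m)/2}^{(k)}$ rather than $\C v_j^{(k)}$, which does not affect the argument.
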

\begin{proof}
Note that, if $\WW$ is a $\{T_\Qi,T_\Qj,T_\Qk\}$-invariant subspace of $\VV$, then its orthogonal complement $\WW'$ in $\VV$ with respect to the inner product $B$ of Lemma \ref{lem:polyinnerproduct} is $\{T_\Qi,T_\Qj,T_\Qk\}$-invariant as well (because $T_\Qi$, $T_\Qj$ and $T_\Qk$ are skew-adjoint with respect to $B$), and $\VV = \WW \oplus \WW'$. Iteration of this observation shows that $\VV$ may be decomposed as a direct sum of minimal $\{T_\Qi,T_\Qj,T_\Qk\}$-invariant subspaces.

By Lemma \ref{lem:sl2rep}, for each of these subspaces one may choose a basis so that the behaviour of $\ladderR$, $\ladderU$, $\ladderD$ and $\opMS$ is prescribed by \eqref{eq:sl2repT} and \eqref{eq:sl2repMS}. These bases together constitute a basis of $\VV$, and inspection of the behaviour of $\ladderR$, $\ladderU$, $\ladderD$ and $\opMS$ on this basis immediately yields the validity of the decompositions \eqref{eq:MSdiag}, \eqref{eq:Teigen} and \eqref{eq:MSeigen}, as well as the fact that \eqref{eq:MSup} and \eqref{eq:MSdown} are isomorphisms.

In particular, the summands in the right-hand side of \eqref{eq:MSeigen} have all the same dimension, and therefore
\[
\dim E^\VV_\opMS(\ell(\ell+2)) = (\ell+1) \dim ( E^\VV_\opMS(\ell(\ell+2)) \cap  E^\VV_{\ladderR}(\ell) ).
\]
On the other hand, by applying \eqref{eq:Teigen} with $m = \ell$,
\[
\dim E^{\VV}_{\ladderR}(\ell) = \sum_{j \in \N} \dim (E^{\VV}_{\ladderR}(\ell) \cap E^\VV_\opMS((\ell+2j)(\ell+2j+2))),
\]
and also, by applying \eqref{eq:Teigen} with $m= \ell+2$ and using the isomorphism \eqref{eq:MSup} with $\ell$ replaced by $\ell+2j$ and then $j$ replaced by $j+1$, we find that
\[\begin{split}
\dim E^{\VV}_{\ladderR}(\ell+2) &= \sum_{j \in \N} \dim (E^{\VV}_{\ladderR}(\ell+2) \cap E^\VV_\opMS((\ell+2j+2)(\ell+2j+4)))\\
&=\sum_{j \in \N} \dim (E^{\VV}_{\ladderR}(\ell) \cap E^\VV_\opMS((\ell+2j+2)(\ell+2j+4))).
\end{split}\]
Thus, by looking at the difference, we obtain that
\[
\dim  E^\VV_{\ladderR}(\ell) - \dim E^\VV_{\ladderR}(\ell+2) = \dim(E^{\VV}_{\ladderR}(\ell) \cap E^\VV_\opMS(\ell(\ell+2))),
\]
and \eqref{eq:MSdim} follows.
\end{proof}

We may now apply Lemma \ref{lem:sl2rep_dec} to $\hPoly{h}$. From the decomposition \eqref{eq:bihomog}, we know that the only eigenvalues of $\ladderR$ that appear in $\hPoly{h}$ are
\[
-h,-h+2,\dots,h-2,h,
\]
that is, they have the form $\pm (h-2m)$ for some $m \in \{0,\dots,\lfloor h/2 \rfloor\}$.
Hence, by \eqref{eq:sl2repMS} and \eqref{eq:MSdim}, the only eigenvalues of $\opMS$ that may appear in $\hPoly{h}$ are of the form $\lambda^\opMS_{h,m} = (h-2m)(h-2m+2)$ for some $m \in \{0,\dots,\lfloor h/2 \rfloor\}$.

Define now $\QPoly{h}{m}$ to be the subspace of $\hPoly{h}$ made of eigenfunctions of $\opMS$ of eigenvalue $\lambda_{h,m}^\opMS$. Observe that $\lambda^{\ladderR}_{p,q} = p-q=h-2q$ whenever $h=p+q$; so, by \eqref{eq:MSdiag} to \eqref{eq:MSeigen}, putting $\ell = h-2m$ and $j=q-m$, we find that
\[
\hPoly{h} = \bigoplus_{m=0}^{\lfloor h/2 \rfloor} \QPoly{h}{m}, \qquad \QPoly{h}{m} = \bigoplus_{q=m}^{h-m} \QPoly{h}{m} \cap \CPoly{h-q}{q},
\]
and moreover, by \eqref{eq:MSdim} and \eqref{eq:dimCPoly},
\[\begin{split}
\dim \QPoly{h}{m} &= (h-2m+1) (\dim \CPoly{h-m}{m} - \dim \CPoly{h-m+1}{m-1}) \\
&= \frac{(h-2m+1)^2}{2n-1} \binom{h-m+2n-1}{2n-2} \binom{m+2n-2}{2n-2}.
\end{split}\]

Correspondingly, if we define $\QHarm{h}{m} = \QPoly{h}{m} \cap \Harm$, then
\begin{equation}\label{eq:hqharmdec}
\hHarm{h} = \bigoplus_{m=0}^{\lfloor h/2 \rfloor} \QHarm{h}{m}, \qquad \QHarm{h}{m} = \bigoplus_{q=m}^{h-m} \QHarm{h}{m} \cap \CHarm{h-q}{q}.
\end{equation}
Moreover from \eqref{eq:polyharm} it follows that
\[
\QPoly{h}{m} = \QHarm{h}{m} \oplus |\cdot|^2 \QPoly{h-2}{m-1}
\]
(note that $|\cdot|^2$ and $\opMS$ commute and $\lambda^\opMS_{h,m} = \lambda^\opMS_{h-2,m-1}$), and therefore
\[\begin{split}
\dim \QHarm{h}{m} &= \dim \QPoly{h}{m} - \dim \QPoly{h-2}{m-1} \\
&= \frac{(h-2m+1)^2 (h+2n-1)}{(2n-2)(2n-1)} \binom{h-m+2n-2}{2n-3} \binom{m+2n-3}{2n-3}.
\end{split}\]

Orthogonality in $L^2(\Sphere)$ of the decompositions \eqref{eq:hqharmdec} follows because the summands are contained in distinct eigenspaces of the self-adjoint operators $\opMS$ and $\ladderR$. By \eqref{eq:realharmonics} we then conclude that
\[
L^2(\Sphere) = \osum_{(h,m) \in \IQ} \QHarm{h}{m},
\]
so in particular the spaces $\QHarm{h}{m}$ are the joint eigenspaces of $\opLB,\opMS$ in $L^2(\Sphere)$, and therefore they are $\QSym$-invariant. This proves Proposition \ref{prp:quatharmonics}.

\section{Zonal harmonics}

In this section, we obtain explicit formulas for the integral kernels of the orthogonal projection operators associated with the quaternionic spherical harmonic decomposition \eqref{eq:quaternionicharmonics}.
These kernels may be characterised by their invariance properties with respect to subgroups of $\QSym$ and may be thought of the quaternionic analogue of ``zonal spherical harmonics''.

The explicit formulas for these kernels are given in terms of classical orthogonal polynomials. For all $q \in \N$, let $\pU_q$ denote the $q$th Chebyshev polynomial of the second kind, that is,
\[
\pU_q(t) = \sum_{j=0}^{\lfloor q/2 \rfloor} (-1)^j \binom{q-j}{j} (2t)^{q-2j}
\]
\cite[\S10.11, p.\ 185, eq.\ (23)]{erdelyi_higher2_1981}.
Moreover, for all $m,\alpha,\beta \in \N$, define the polynomial $\pG^{(\alpha,\beta)}_m$ by
\[\begin{split}
\pG^{(\alpha,\beta)}_m(t) &= \pP^{(\alpha,\beta)}_m(2t-1) \\
&= \frac{(\beta+m)!}{m! (\alpha+\beta+m)!} \sum_{l=0}^m (-1)^l \binom{m}{l} \frac{(\alpha+\beta+2m-l)!}{(\beta+m-l)!} t^{m-l},
\end{split}\]
where $\pP^{(\alpha,\beta)}_m$ is a Jacobi polynomial (see \cite[\S10.8, p.\ 170, eq.\ (16)]{erdelyi_higher2_1981} and \cite[\S 2.8, p.\ 101]{erdelyi_higher1_1981}).

For later use, we record here some useful identities involving the above polynomials. First of all,
\begin{equation}\label{eq:special_normalization}
\pU_q(1) = q+1, \qquad \pG^{(\alpha,\beta)}_m(1) = \binom{\alpha+m}{m}
\end{equation}
(see \cite[\S 10.11, p.\ 184, eq.\ (2)]{erdelyi_higher2_1981} and \cite[\S 10.8, p.\ 169, eq.\ (3)]{erdelyi_higher2_1981}).
Moreover,
\begin{equation}\label{eq:special_weight}
\begin{split}
t \, \pG^{(\alpha,\beta)}_m(t)
&= \frac{(m+1)(m+\alpha+\beta+1)}{(2m+\alpha+\beta+2)(2m+\alpha+\beta+1)} \pG^{(\alpha,\beta)}_{m+1}(t) \\
&\qquad + \frac{1}{2} \left(1-\frac{(\alpha-\beta)(\alpha+\beta)}{(2m+\alpha+\beta+2)(2m+\alpha+\beta)}\right) \pG^{(\alpha,\beta)}_{m}(t) \\
&\qquad + \frac{(m+\alpha)(m+\beta)}{(2m+\alpha+\beta+1)(2m+\alpha+\beta)} \pG^{(\alpha,\beta)}_{m-1}(t);
\end{split}
\end{equation}
see \cite[\S10.8, p.\ 173, eqs.\ (33) and (36)]{erdelyi_higher2_1981}.

This section is devoted to the proof of the following result, which should be compared to \cite[Theorem 3.1(4)]{johnson_composition_1977}.

\begin{prp}\label{prp:Qzonalharm}
For all $(h,m) \in \IQ$, the following hold.
\begin{enumerate}[label=(\roman*)]
\item\label{en:Qzonalharm1} The integral kernel $Z_{h,m}$ of the orthogonal projection of $L^2(\Sphere)$ onto $\QHarm{h}{m}$ is given by
\begin{multline*}
Z_{h,m}(x,y) = \frac{(h-2m+1)(h+2n-1)}{(2n-2)(2n-1)} \binom{h-m+2n-2}{2n-3} \\
\times |\langle x,y \rangle|^{h-2m} \, \pG^{(2n-3,h-2m+1)}_{m}(|\langle x,y \rangle|^2 ) \,  \pU_{h-2m}\left(\frac{\Re \langle x,y \rangle}{|\langle x,y \rangle|}\right)
\end{multline*}
for all $x,y \in \Sphere$.

\item\label{en:Qzonalharm2} For all $e \in \Sphere$, if $\QSym_e$ is the stabiliser of $e$ in $\QSym$, then the space of $\QSym_e$-invariant elements of $\QHarm{h}{m}$ is $1$-dimensional and spanned by $Z_{h,m}(\cdot,e)$.
\end{enumerate}
\end{prp}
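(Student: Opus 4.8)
The plan is to compute the zonal kernels $Z_{h,m}$ explicitly by exploiting the $\lie{su}(2)$-module structure developed in Section~\ref{section:qsh}, combined with the known formulas for the zonal kernels of the complex spherical harmonic decomposition. First I would recall that, since the space $\QHarm{h}{m}$ is $\QSym$-invariant and, by the refinement \eqref{eq:hqharmdec}, decomposes as $\bigoplus_{q=m}^{h-m} \QHarm{h}{m} \cap \CHarm{h-q}{q}$, the projection $P_{h,m}$ onto $\QHarm{h}{m}$ is the sum over $q$ of the projections onto the pieces $\QHarm{h}{m} \cap \CHarm{h-q}{q}$. The key point is that the ladder operators $\ladderU, \ladderD$ (which are first-order differential operators on $\Sphere$, hence given by explicit vector fields via \eqref{eq:unitfields} and the definitions $\ladderU = iT_\Qj - T_\Qk$, $\ladderD = iT_\Qj + T_\Qk$) act as isomorphisms \eqref{eq:MSup}-\eqref{eq:MSdown} between consecutive pieces, and they intertwine the $\QSym_e$-invariant vectors. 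So the whole kernel $Z_{h,m}$ can be reconstructed from the ``top'' piece $\QHarm{h}{m} \cap \CHarm{h-m}{m}$ by repeatedly applying $\ladderD$ in one variable.

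Concretely, for part~\ref{en:Qzonalharm2} I would argue as follows. The stabiliser $\QSym_e$ contains the stabiliser of $e$ in $\QSymR$, which for the standard point $e = (1,0,\dots,0)$ is a copy of $\Sp(n-1)$ acting on the last $n-1$ quaternionic coordinates; by Frobenius reciprocity / the Gelfand pair property of $(\Sp(n) \times \Sp(1), \Sp(n-1) \times \Sp(1))$, each irreducible $\QHarm{h}{m}$ contains at most a one-dimensional space of $\QSym_e$-invariants. Existence of a nonzero invariant vector follows because $Z_{h,m}(\cdot, e)$ is the reproducing kernel evaluated at $e$ and hence nonzero (the space $\QHarm{h}{m}$ is nontrivial), while the invariance of $Z_{h,m}(\cdot,e)$ under $\QSym_e$ is immediate from the $\QSym$-equivariance of the projection kernel: $Z_{h,m}(g\cdot, g\cdot) = Z_{h,m}(\cdot,\cdot)$ for $g \in \QSym$, so $Z_{h,m}(g\cdot, e) = Z_{h,m}(\cdot, e)$ for $g \in \QSym_e$. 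This already gives \ref{en:Qzonalharm2} once \ref{en:Qzonalharm1} identifies $Z_{h,m}$.

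For part~\ref{en:Qzonalharm1}, the strategy is to determine the function $F_{h,m}(x,y) := Z_{h,m}(x,y)$ as a function of the two invariants $|\langle x,y\rangle|$ and $\Re\langle x,y\rangle / |\langle x,y\rangle|$ (these generate the $\QSym$-invariant functions on $\Sphere \times \Sphere$, since $\QSym$ acts on pairs with orbit space parametrised by $\langle x,y\rangle$ up to the $\QSymL$-action of left multiplication). One route: start from the known complex zonal kernels. The projection onto $\CHarm{p}{q}$ on the sphere in $\C^{2n}$ has a kernel expressible in terms of Jacobi polynomials in $\langle x,y\rangle_\C$ (this is classical, e.g.\ from \cite{folland_tangential_1972,cowling_spectral_2011}). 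Summing over the ``diagonal strip'' $p + q = h$, $q \in \{m,\dots,h-m\}$ the pieces that make up $\QHarm{h}{m}$, one obtains $Z_{h,m}$ as a sum; the dependence on $\Re\langle x,y\rangle/|\langle x,y\rangle|$ arises because the $\Sp(1)$-factor rotates the phase of $\langle x,y\rangle$, and the sum over the strip collapses (via a Christoffel--Darboux-type identity, or directly from the generating function for Chebyshev polynomials) into the product of a Chebyshev polynomial $\pU_{h-2m}$ in the phase variable and a single Jacobi polynomial $\pG^{(2n-3,h-2m+1)}_m$ in $|\langle x,y\rangle|^2$. The normalisation constant is then fixed by setting $x = y = e$, using $Z_{h,m}(e,e) = \dim \QHarm{h}{m}$ together with the evaluations \eqref{eq:special_normalization}: one checks that
\[
\frac{(h-2m+1)(h+2n-1)}{(2n-2)(2n-1)} \binom{h-m+2n-2}{2n-3} \cdot (h-2m+1) \cdot \binom{2n-3+m}{m} = \dim \QHarm{h}{m},
\]
which matches the dimension formula in Proposition~\ref{prp:quatharmonics}\ref{en:quatharmonics_dim}. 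An alternative, perhaps cleaner route that avoids guessing the final closed form: verify directly that the proposed $Z_{h,m}$ is (a) $\QSym$-invariant in the required sense, (b) lies in $\QHarm{h}{m}$ in each variable — i.e.\ is annihilated by $\opGL$ and is a $\lambda^\opLB_{h,m}$, $\lambda^\opMS_{h,m}$ joint eigenfunction — using \eqref{eq:sphericallaplacian} and the recurrence \eqref{eq:special_weight}, and (c) has the correct value at the diagonal; uniqueness of the reproducing kernel then finishes it.

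The main obstacle I expect is step~(b) of the verification route, or equivalently the collapse of the strip-sum in the first route: showing that the explicit product of $\pU_{h-2m}$ and $\pG^{(2n-3,h-2m+1)}_m$ actually reproduces $\QHarm{h}{m}$ and nothing else. This requires checking that left multiplication by the phase variable, which is how $\ladderU,\ladderD$ effectively act on these zonal functions, moves within the span of Chebyshev polynomials of the right degrees, and that applying $\opGL$ to $|\langle x,y\rangle|^{h-2m}\pG^{(2n-3,h-2m+1)}_m(|\langle x,y\rangle|^2)$ as a function of $x$ produces exactly the Jacobi differential equation — a somewhat delicate computation in the radial/spherical coordinates adapted to the $\Qi$-complex structure, where one must carefully track how $T_\Qi$ and the complex Laplacian interact. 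The bookkeeping of indices (the shift $\alpha = 2n-3$, $\beta = h-2m+1$, and the argument $2|\langle x,y\rangle|^2 - 1$ of the underlying Jacobi polynomial) is where the proof will be most error-prone, so I would cross-check the final formula against the $m=0$ case (where $Z_{h,0}$ should reduce, via \eqref{eq:special_normalization}, to a multiple of $|\langle x,y\rangle|^h \pU_h(\Re\langle x,y\rangle/|\langle x,y\rangle|)$, the zonal kernel of $\hHarm{h}$ restricted appropriately) and against the known complex zonal kernels for $n=1$-type degenerations.
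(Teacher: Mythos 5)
Your overall architecture (identify $Z_{h,m}(\cdot,e)$ as the essentially unique $\QSym_e$-invariant element of $\QHarm{h}{m}$, then fix the constant from $Z_{h,m}(e,e)=\dim\QHarm{h}{m}$ and the evaluations \eqref{eq:special_normalization}) matches the paper's, and your normalisation check is correct. But the crux of the whole proposition is the \emph{uniqueness} in part \ref{en:Qzonalharm2}, and your derivation of it has a genuine gap. Invoking Frobenius reciprocity for the Gelfand pair $(\Sp(n)\times\Sp(1),\Sp(n-1)\times\Sp(1))$ to get an at most one-dimensional space of $\QSym_e$-invariants in $\QHarm{h}{m}$ presupposes that $\QHarm{h}{m}$ is \emph{irreducible} under $\QSym$ — a $\QSym$-invariant subspace that splits into $k$ irreducibles carries a $k$-dimensional space of invariants. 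Irreducibility is established nowhere (Proposition~\ref{prp:quatharmonics} only gives invariance), and in fact it is most naturally deduced \emph{from} the one-dimensionality of the invariants, so the argument as proposed is circular unless you import the full multiplicity-free decomposition from the representation-theoretic literature. The paper deliberately avoids this: it proves one-dimensionality by elementary means, first characterising all $\QSym_e$-invariant polynomials as polynomials in $\Re\langle x,e\rangle$, $|\langle x,e\rangle|^2$, $|x|^2$ (Lemma~\ref{lem:poly_invariance} and Proposition~\ref{prp:Qzonalpoly}), then imposing homogeneity, the $\opMS$-eigenvalue condition and harmonicity as explicit two-term and three-term recurrences on the coefficients (Proposition~\ref{prp:Qhomzonalpoly}); solving the recurrences shows the invariant element is unique up to scalar \emph{and} produces the Chebyshev and Jacobi coefficient patterns, so existence, uniqueness and the closed form all come out of one computation.

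Your first route for part \ref{en:Qzonalharm1} also fails as stated: the summands of $\QHarm{h}{m}=\bigoplus_{q=m}^{h-m}\QHarm{h}{m}\cap\CHarm{h-q}{q}$ are in general \emph{proper} subspaces of the $\CHarm{h-q}{q}$ (each $\CHarm{h-q}{q}$ distributes over several $\QHarm{h}{m'}$), so summing the known complex zonal kernels over the strip produces the projection onto $\bigoplus_{q=m}^{h-m}\CHarm{h-q}{q}$, not $Z_{h,m}$; the kernels of the intersections are precisely what you do not yet know. Your second route (direct verification that the proposed formula is harmonic, an $\opMS$-eigenfunction and correctly normalised) is viable and is essentially the paper's computation run backwards, but it still requires the uniqueness of invariants as an input to conclude that a kernel with these properties must equal $Z_{h,m}$, so it does not rescue the argument from the gap above.
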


The symbol $Z_{h,m}$ will denote the zero function whenever the indices $h,m$ are out of the range $h,m \in \N$, $m \leq \lfloor h/2 \rfloor$.

In order to prove Proposition \ref{prp:Qzonalharm}, it will be first useful to determine the form of a polynomial which is invariant with respect to a group of isometries.

\begin{lem}\label{lem:poly_invariance}
Let $\inner{\cdot}{\cdot}$ denote the standard inner product on $\R^d$. Let $G$ be a subgroup of the group $\group{O}(d)$ of linear isometries of $\R^d$. Suppose that we have an orthogonal decomposition
\[
\R^d = V \oplus \bigoplus_{j=1}^{m} W_j,
\]
where $G$ fixes $V$, and the spaces $W_j$ are $G$-invariant. Moreover, for all $k =1,\dots,m$, assume that the subgroup $G_k$ of $G$ defined by
\[
G_k = \{ T \in G \tc Tx = x \text{ for all } x \in W_j \text{ and } j=1,\dots,k-1\}
\]
acts transitively on the unit sphere $\{x \in W_k \tc |x| = 1\}$ of $W_k$. Let $b_1,\dots,b_l$ be a basis of $V$, and let $P_j$ be the orthogonal projection onto $W_j$ for $j=1,\dots,m$. Then a polynomial $p$ on $\R^d$ is $G$-invariant if and only if $p$ is of the form
\begin{equation}\label{eq:invariant_polynomial}
p(x) = g(\inner{b_1}{x},\dots,\inner{b_l}{x},|P_1 x|^2,\dots,|P_{m} x|^2)
\end{equation}
for some polynomial $g$ on $\R^{l+m}$.
\end{lem}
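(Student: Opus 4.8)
The plan is to prove both implications, the easy one being that any polynomial of the form \eqref{eq:invariant_polynomial} is $G$-invariant. Indeed, since $G$ fixes $V$ pointwise, it fixes each $\inner{b_i}{x}$; and since each $W_j$ is $G$-invariant and $G$ acts by isometries, $|P_j x|^2$ is $G$-invariant as well. Hence any polynomial expression in these quantities is $G$-invariant. The substance of the lemma is the converse: a $G$-invariant polynomial must have this form.

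For the converse, I would argue by induction on $m$. When $m=0$ there is nothing invariant to exploit beyond the fact that $G$ fixes $V = \R^d$ pointwise, so every polynomial is trivially of the form $g(\inner{b_1}{x},\dots,\inner{b_l}{x})$ (expanding $p$ in the coordinates dual to $b_1,\dots,b_l$). For the inductive step, suppose $p$ is $G$-invariant. First restrict attention to the subgroup $G_1$, which acts transitively on the unit sphere of $W_1$ and fixes $V \oplus \bigoplus_{j \geq 2} W_j$ pointwise. Writing $x = x' + x_1$ with $x_1 = P_1 x \in W_1$ and $x'$ in the complementary sum, and expanding $p(x) = \sum_\beta q_\beta(x') \, x_1^\beta$ (multi-index $\beta$ in coordinates on $W_1$), the $G_1$-invariance forces, for each fixed $x'$, that $x_1 \mapsto p(x'+x_1)$ is an $\group{O}(W_1)$-invariant polynomial on $W_1$ — here I use that $G_1$ acting transitively on the unit sphere of $W_1$ together with being a subgroup of $\group{O}(W_1)$ actually forces $\group{O}(W_1)$-invariance of the resulting polynomial, by the classical fact (Lemma~\ref{lem:polyinnerproduct}-style, or rather the elementary invariant theory of $\group{O}(k)$, cf.\ \cite{stein_introduction_1971}) that a polynomial on $\R^k$ invariant under a subgroup transitive on $\mathbb{S}^{k-1}$ depends only on $|x|^2$. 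Thus $p(x) = \tilde p(x', |P_1 x|^2)$ for some polynomial $\tilde p$ in $x'$ and one extra variable $s_1 = |P_1 x|^2$; concretely, expand $\tilde p$ in powers of $s_1$ with coefficients polynomials in $x'$.

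Now the coefficients of $\tilde p$ as polynomials in $s_1$ are themselves $G$-invariant: applying an arbitrary $T \in G$ sends $|P_1 x|^2$ to $|P_1 x|^2$ (as $W_1$ is $G$-invariant) and permutes the rest, so invariance of $p$ forces invariance of each coefficient polynomial on the space $V \oplus \bigoplus_{j=2}^m W_j$ under the restricted action of $G$. That restricted action fits the hypotheses of the lemma with $m$ replaced by $m-1$ (the decomposition is $V \oplus \bigoplus_{j=2}^m W_j$, the groups $G_k$ for $k = 2,\dots,m$ are unchanged and still act transitively on the respective unit spheres), so by the inductive hypothesis each coefficient is a polynomial in $\inner{b_1}{x},\dots,\inner{b_l}{x},|P_2 x|^2,\dots,|P_m x|^2$. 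Substituting back yields the desired form \eqref{eq:invariant_polynomial}, completing the induction.

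The main obstacle I anticipate is making precise the step where $G_1$-invariance of $x_1 \mapsto p(x'+x_1)$ is upgraded to $\group{O}(W_1)$-invariance, and hence to dependence on $|P_1 x|^2$ alone. This requires the elementary but not entirely trivial fact that a polynomial on $\R^k$ invariant under a subgroup of $\group{O}(k)$ that is transitive on the unit sphere must be a polynomial in $|x|^2$; the cleanest route is to note that such a polynomial is constant on each sphere $\{|x| = r\}$, hence, being a polynomial in one less variable after passing to the ``radial'' description, is a polynomial in $r^2$ (one can see this by averaging over $\group{O}(k)$ and using that the spherical average of a polynomial is a polynomial in $|x|^2$, or by restricting to a line and using homogeneity of the homogeneous components). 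A secondary bookkeeping point is ensuring the induction hypothesis genuinely applies to the coefficient polynomials, which are a priori only defined on the subspace $V \oplus \bigoplus_{j\ge2} W_j$ rather than all of $\R^d$; but this is harmless since we may regard them as polynomials on that subspace and the restricted group action preserves the required structure, and at the end a polynomial in $\inner{b_i}{x}$ and $|P_j x|^2$ for $j \geq 2$ is automatically a polynomial on all of $\R^d$ of the claimed shape.
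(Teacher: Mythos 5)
Your easy direction, your base case $m=0$, and your final ``transitive on the sphere implies radial'' argument (via homogeneous components) all agree with the paper. The gap is in the first step of your inductive step, where you assert that $G_1$ ``acts transitively on the unit sphere of $W_1$ and fixes $V \oplus \bigoplus_{j\geq 2} W_j$ pointwise''. No such subgroup is supplied by the hypotheses: $G_1$ is all of $G$, which is only assumed to preserve the spaces $W_j$ setwise, and the subgroups $G_k$ that do fix spaces pointwise fix the \emph{earlier} $W_1,\dots,W_{k-1}$ while being transitive on the sphere of the \emph{later} $W_k$ — never the other way around. Hence for $T \in G$ moving $P_1x$ around the unit sphere of $W_1$ you only get $p(x'+x_1) = p(Tx'+Tx_1)$ with $Tx'\neq x'$ in general, so you cannot conclude that $x_1 \mapsto p(x'+x_1)$ is invariant for frozen $x'$. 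This is not a harmless technicality: in the intended application (Proposition \ref{prp:Qzonalpoly}, with $G = \QSym_e$, $W_1 = (\Im\Quat)e$, $W_2 = (\Quat e)^\perp$) the maps $x \mapsto cTx$ used to realize transitivity on the unit sphere of $W_1$ necessarily act nontrivially on $W_2$ whenever $c \neq \pm 1$, since left multiplication by a non-central unit quaternion is not $\Quat$-linear; the subgroup you restrict to simply does not exist there.

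The paper avoids this by running the induction in the opposite order. One first applies the inductive hypothesis to the subgroup $\{T \in G \tc T|_{W_1} = \id\}$ (this is $G_2$ in the statement's indexing) and the decomposition with $\tilde V = V \oplus W_1$ and $\tilde W_j = W_{j+1}$; its hypotheses are satisfied because the relevant stabilizer subgroups coincide with $G_{k+1}$. This writes the $G$-invariant $p$ as a polynomial in the $\inner{b_i}{x}$, the linear coordinates $\inner{\tilde b_j}{x}$ of $P_1x$ in an orthonormal basis of $W_1$, and the invariants $|P_kx|^2$ for $k \geq 2$. Expanding in monomials in the already-$G$-invariant quantities, each coefficient is a $G$-invariant polynomial depending on $x$ only through $P_1 x$; for such a function the (possibly nontrivial) action of $T\in G$ on the remaining $W_j$ is irrelevant, so transitivity of $G$ on the unit sphere of $W_1$ applies directly and your radiality argument then finishes the proof. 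If you wish to keep your ordering, you must replace the ``freeze $x'$'' step by this coefficient-extraction argument.
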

\begin{proof}
Clearly a polynomial $p$ of the form \eqref{eq:invariant_polynomial} is $G$-invariant. Indeed, if $P_0$ denotes the orthogonal projection of $\R^d$ onto $V$, then $P_0 T x = T P_0 x = P_0 x$, $\inner{b_j}{x} = \inner{b_j}{P_0 x}$, $P_k T x = T P_k x$, and therefore $\inner{b_j}{Tx} = \inner{b_j}{x}$, $|P_k T x|^2 = |P_k x|^2$ for all $T \in G$, $x \in \R^d$, $j=1,\dots,l$, and $k=1,\dots,m$. So it remains to prove the converse, that is, that every $G$-invariant polynomial on $\R^d$ is of the form \eqref{eq:invariant_polynomial}.

We proceed by induction on $m \in \N$. The case $m=0$ is trivial, since every polynomial $f$ on $\R^d$ may be written in the form $f(x) = g(\inner{b_1}{x},\dots,\inner{b_l}{x})$ (the correspondence $x \mapsto (\inner{b_1}{x},\dots,\inner{b_l}{x})$ is a linear automorphism of $\R^d$). Suppose instead that $m > 0$. Then we apply the inductive hypothesis to the group $G_2$ and the decomposition $\R^d = \tilde V \oplus \bigoplus_{j=1}^{m-1} \tilde W_j$, where $\tilde V= V \oplus W_1$ and $\tilde W_j = W_{j+1}$. In this way, if $\tilde b_1,\dots,\tilde b_r$ is an orthonormal basis of $W_1$, then we obtain that every $G_2$-invariant polynomial $p$ is of the form
\[
p(x) = h(\inner{b_1}{x},\dots,\inner{b_l}{x},\inner{\tilde b_1}{x},\dots,\inner{\tilde b_r}{x},|P_2 x|^2,\dots,|P_m x|^2)
\]
for some polynomial $h$ on $\R^{l+r+m-1}$. Note that every such polynomial $p$ may be uniquely written as a sum:
\[
p(x) = \sum_{\substack{s = (s_2,\dots,s_m) \in \N^{m-1} \\ t = (t_1,\dots,t_l) \in \N^l}} h_{s,t}(\inner{\tilde b_1}{x},\dots,\inner{\tilde b_r}{x}) \prod_{j=1}^l \inner{b_j}{x}^{t_j} \prod_{k=2}^m |P_k x|^{2s_k},
\]
for some polynomials $h_{s,t}$ on $\R^r$ (all but finitely many of which are zero); in particular, if $p$ is $G$-invariant, it follows at once (since the $\inner{b_j}{x}$ and the $|P_k x|^2$ are $G$-invariant) that each of the polynomials $h_{s,t}(\inner{\tilde b_1}{x},\dots,\inner{\tilde b_r}{x})$ is $G$-invariant as well.

Since $\tilde b_1,\dots,\tilde b_r$ is an orthonormal basis of $W_1$, the correspondence
\[
W_1 \ni x \mapsto (\inner{\tilde b_1}{x},\dots,\inner{\tilde b_r}{x}) \in \R^r
\]
is a linear isometric isomorphism. Since $G$ acts transitively on the unit sphere of $W_1$ (and therefore, by linearity, on any sphere centred at the origin), we deduce that, if $h_{s,t}(\inner{\tilde b_1}{x},\dots,\inner{\tilde b_r}{x})$ is $G$-invariant, then $h_{s,t}(y) = h_{s,t}(y')$ for all $y,y' \in \R^r$ with $|y|=|y'|$. In other words, $h_{s,t}$ is $\group{O}(r)$-invariant. So, if we decompose $h_{s,t}$ into its homogeneous components,
\[
h_{s,t} = \sum_{u \in \N} h_{s,t,u},
\]
then each $h_{s,t,u}$ is $O(r)$-invariant as well. On the other hand, by homogeneity, $h_{s,t,u}$ is uniquely determined by its values on the unit sphere of $\R^r$; so, if $h_{s,t,u}$ is constant on the unit sphere of $\R^r$, then $h_{s,t,u}(y) = c_{s,t,u} |y|^u$ for some $c_{s,t,u} \in \C$, and in particular $u$ must be even, unless $c_{s,t,u} = 0$.
From this it follows that $h_{s,t}(y) = q_{s,t}(|y|^2)$ for some polynomial $q_{s,t}$ on $\R$; hence, in this case,
\[
h_{s,t}(\inner{\tilde b_1}{x},\dots,\inner{\tilde b_r}{x}) = q_{s,t}\left(\sum_{j=1}^r \inner{\tilde b_j}{x}^2\right) = q_{s,t}(|P_1 x|^2),
\]
and we are done.
\end{proof}

We now apply the previous lemma to the subgroup of $\RSym$ considered in Proposition \ref{prp:Qzonalharm}.

\begin{prp}\label{prp:Qzonalpoly}
Let $G = \QSym_e$ be the stabilizer of $e \in \Sphere$ in $\QSym$. Then a polynomial $f \in \Poly$ is $G$-invariant if and only if $f$ is of the form
\[
f(x) = \sum_{j,k,l \in \N} c_{j,k,l} \, (\Re \langle x,e \rangle)^j \, |\langle x,e \rangle|^{2k} \, |x|^{2l}
\]
for some coefficients $c_{j,k,l} \in \C$ (all but finitely many of which are zero).
\end{prp}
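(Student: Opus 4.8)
The plan is to derive this proposition from Lemma~\ref{lem:poly_invariance}, applied to an orthogonal decomposition of $\Quat^n \cong \R^{4n}$ adapted to the base point $e$.

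\emph{Reduction to a special point.} Since $\QSymR$ acts transitively on $\Sphere$, pick $T \in \QSymR \subseteq \QSym$ with $T e_0 = e$, where $e_0 := (1,0,\dots,0)$. Then $\QSym_e = T\,\QSym_{e_0}\,T^{-1}$, so $f$ is $\QSym_e$-invariant if and only if $f \circ T$ is $\QSym_{e_0}$-invariant. Moreover $T$ preserves $\langle\cdot,\cdot\rangle$ and $|\cdot|$, so $\langle Tx, e\rangle = \langle x, e_0\rangle$ and $|Tx| = |x|$; hence the class of polynomials of the form stated in the proposition (with respect to $e$) is carried by $f \mapsto f \circ T$ exactly onto the corresponding class with respect to $e_0$. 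It therefore suffices to prove the proposition when $e = e_0$.

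\emph{The stabiliser.} Write a generic element of $\QSym$ as $x \mapsto c\,(xA)$ with $c \in \Quat$, $|c| = 1$, and $A$ a quaternionic matrix whose rows form an orthonormal basis of $\Quat^n$, acting by right multiplication on row vectors. Since $e_0 A$ is the first row of $A$, the equation $c\,(e_0 A) = e_0$ forces that row to be $(\bar c, 0, \dots, 0)$; orthogonality of the rows then makes $A$ block diagonal, $A = \mathrm{diag}(\bar c, A')$ with $A' \in \Sp(n-1)$. Thus $G := \QSym_{e_0}$ consists exactly of the maps
\[
(x_1, x') \longmapsto (c\,x_1\,\bar c,\ c\,x'\,A'), \qquad |c| = 1,\quad A' \in \Sp(n-1),
\]
on $\Quat^n = \Quat \oplus \Quat^{n-1}$.

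\emph{Applying Lemma~\ref{lem:poly_invariance}.} Use the orthogonal decomposition $\Quat^n = V \oplus W_1 \oplus W_2$, with $V = \R e_0$, $W_1 = \Im\Quat \oplus 0$ and $W_2 = 0 \oplus \Quat^{n-1}$. Here $V$ is fixed by $G$ (conjugation fixes the reals) and $W_1, W_2$ are $G$-invariant. For $k = 1$, $G_1 = G$ acts on $W_1 \cong \Im\Quat$ by $u \mapsto c u \bar c$, which realises all of $\group{SO}(3)$ and so is transitive on the unit sphere of $W_1$; for $k = 2$, an element of $G$ fixes $W_1$ pointwise only if $c$ is central, i.e.\ $c = \pm 1$, so $G_2 = \{\pm 1\} \times \Sp(n-1)$ acts on $W_2 \cong \Quat^{n-1}$ by $x' \mapsto \pm x' A'$, which is transitive on the unit sphere of $W_2$ since $\Sp(n-1)$ is. Taking $b_1 = e_0$ as a basis of $V$, the orthogonal projection onto $\Quat e_0$ is $x \mapsto \langle x, e_0\rangle e_0$, so $P_1 x = (\Im\langle x, e_0\rangle)\,e_0$ and $P_2 x = x - \langle x, e_0\rangle e_0$, whence
\[
\inner{b_1}{x} = \Re\langle x, e_0\rangle, \qquad |P_1 x|^2 = |\langle x, e_0\rangle|^2 - (\Re\langle x, e_0\rangle)^2, \qquad |P_2 x|^2 = |x|^2 - |\langle x, e_0\rangle|^2.
\]
Lemma~\ref{lem:poly_invariance} then says that $f$ is $G$-invariant if and only if $f(x) = g(\Re\langle x, e_0\rangle,\ |\langle x, e_0\rangle|^2 - (\Re\langle x, e_0\rangle)^2,\ |x|^2 - |\langle x, e_0\rangle|^2)$ for some polynomial $g$ on $\R^3$. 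Since $(a,b,c) \mapsto (a,\, b - a^2,\, c - b)$ is a polynomial automorphism of $\R^3$, one may rewrite $g$ as a polynomial in $\Re\langle x, e_0\rangle$, $|\langle x, e_0\rangle|^2$ and $|x|^2$, which yields the asserted expansion; conversely every such expansion is $\QSym$-invariant, hence $G$-invariant, because $\Re\langle x, e\rangle$, $|\langle x, e\rangle|$ and $|x|$ are $\QSym$-invariant.

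The reverse implication and the change of variables at the end are routine. The points that require care are the explicit identification of $G = \QSym_{e_0}$ and the verification of the transitivity hypotheses of Lemma~\ref{lem:poly_invariance} — that conjugation by unit quaternions surjects onto $\group{SO}(3)$ and that $\Sp(n-1)$ acts transitively on the unit sphere of $\Quat^{n-1}$ — together with the short computation identifying the projections $P_1, P_2$ with the quaternionic expressions appearing in the statement.
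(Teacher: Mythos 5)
Your proof is correct and follows essentially the same route as the paper: both apply Lemma~\ref{lem:poly_invariance} to the decomposition $\Quat^n = \R e \oplus (\Im\Quat)e \oplus (\Quat e)^\perp$, verify transitivity via conjugation by unit quaternions on $\Im\Quat$ (the double cover of $\group{SO}(3)$) and the transitive action of $\Sp(n-1)$ on the unit sphere of $(\Quat e)^\perp$, and then pass from $\Re\langle x,e\rangle$, $|\langle x,e\rangle|^2-(\Re\langle x,e\rangle)^2$, $|x|^2-|\langle x,e\rangle|^2$ to the stated generators by a triangular change of variables. Your preliminary reduction to $e=e_0$ and the explicit block-diagonal description of the stabiliser are harmless presentational variants of the paper's more implicit computation $A(qe)=cq\bar c\,Ae$; the only loose phrase is the closing claim that the expansion is ``$\QSym$-invariant'' (it is only $G$-invariant), which is immaterial since the converse is already contained in the lemma.
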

\begin{proof}
The conclusion would follow if we could apply Lemma \ref{lem:poly_invariance} to the group $G$ and the orthogonal decomposition $\Quat^n = V \oplus W_1 \oplus W_2$, where $V = \R e$ and $W_1 = (\Im \Quat) e$; indeed a polynomial in $\Re \langle x,e \rangle$, $|\langle x,e \rangle|^{2}-(\Re \langle x,e \rangle)^2$, $|x|^2-|\langle x,e \rangle|^{2}$ is the same as a polynomial in $\Re \langle x,e \rangle$, $|\langle x,e \rangle|^{2}$, $|x|^2$.

So we must show that the hypotheses of Lemma \ref{lem:poly_invariance} are satisfied. Recall that an element $A$ of $\QSym$ is an $\R$-linear map on $\Quat^n$ of the form
\begin{equation}\label{eq:qsym_map}
A : x \mapsto c T x,
\end{equation}
where $c$ is a unit quaternion and $T \in \QSymR$. In particular, for all $q \in \Quat$,
\begin{equation}\label{eq:qsym_map_action}
A(qe) = cT(qe) = cq\overline{c} \, cTe = cq\overline{c} \, Ae.
\end{equation}
This immediately shows that, if $A \in G$, that is, $Ae = e$, then $A$ fixes $V = \R e$ as well, and moreover $V \oplus W_1 = \Quat e$ is $A$-invariant. Since $A$ is an isometry, it follows that $W_1 = \Quat e \cap V^\perp$ and $W_2 = (\Quat e)^\perp$ are $A$-invariant too.

Note that the unit sphere in $W_1$ is the set of elements of the form $q e$, where $q$ is an imaginary unit quaternion. Moreover, for all unit quaternions $c$, there exists $T \in \QSymR$ such that $Te = \bar c e$ (indeed $\QSymR$ acts transitively on $\Sphere$), so the map $A$ defined by \eqref{eq:qsym_map} with this $T$ belongs to $G$ and $A(qe) = (c q \bar{c}) e$ by \eqref{eq:qsym_map_action}. Note now that unit quaternions act transitively by conjugation
 on the unit sphere of $\Im \Quat$: indeed, for all $t \in \R$, the matrices of $q \mapsto \exp(t\Qi) q \exp(-t\Qi)$,  $q \mapsto \exp(t\Qj) q \exp(-t\Qj)$, and $q \mapsto \exp(t\Qk) q \exp(-t\Qk)$ with respect to the $\R$-basis $\{\Qi,\Qj,\Qk\}$ of $\Im \Quat$ are
\[
\begin{pmatrix}
1 & 0 & 0 \\
0 & \smc(t) & -\sms(t) \\
0 & \sms(t) & \smc(t)
\end{pmatrix},
\begin{pmatrix}
\smc(t) & 0 & \sms(t) \\
0 & 1 & 0 \\
-\sms(t) & 0 & \smc(t)
\end{pmatrix},
\begin{pmatrix}
\smc(t) & -\sms(t) & 0 \\
\sms(t) & \smc(t) & 0 \\
0 & 0 & 1
\end{pmatrix},
\]
where $\smc(t) = \cos(2t)$ and $\sms(t) = \sin(2t)$.
Hence we conclude that $G$ acts transitively on the unit sphere of $W_1$.

Finally, it is clear that the stabilizer $\QSymR_e$ of $e$ in $\QSymR$ is contained in $G$, $\QSymR_e$ fixes $\Quat e = V \oplus W_1$ and $\QSymR_e \cong \Sp(n-1)$ acts transitively on the unit sphere of $W_2 = (\Quat e)^\perp$.
\end{proof}

We can now determine a more precise expression for the quaternionic zonal harmonics.

\begin{prp}\label{prp:Qhomzonalpoly}
Let $e \in \Sphere$ and $G = \QSym_e$. Let $(h,m)\in \IQ$. Let $f \in \Poly$.
\begin{enumerate}[label=(\roman*)]
\item $f$ is $G$-invariant and in $\QPoly{h}{m}$  if and only if $f$ is of the form
\[
f(x) = |\langle x,e \rangle|^{h-2m} \, \pU_{h-2m}\left(\frac{\Re \langle x,e \rangle}{|\langle x,e \rangle|}\right) \sum_{l=0}^m a_l |x|^{2l} |\langle x,e \rangle|^{2(m-l)} ,
\]
for some coefficients $a_0,\dots,a_m \in \C$.

\item $f$ is $G$-invariant and in $\QHarm{h}{m}$  if and only if $f$ is of the form
\[
f(x) =  a \,|x|^{2m}  |\langle x,e \rangle|^{h-2m} \, \pG^{(2n-3,h-2m+1)}_{m}\left(\frac{|\langle x,e \rangle|^2}{|x|^2}\right)  \pU_{h-2m}\left(\frac{\Re \langle x,e \rangle}{|\langle x,e \rangle|}\right),
\]
for some $a \in \C$.
\end{enumerate}
\end{prp}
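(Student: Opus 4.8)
The plan is to combine Proposition~\ref{prp:Qzonalpoly}, which describes all $G$-invariant polynomials, with an analysis of how $\opMS$ (for part~(i)) and $\opGL$ (for part~(ii)) act on the finite-dimensional space of such polynomials of a fixed degree. First I would reduce to the case $e=(1,0,\dots,0)$: the group $\QSymR$ acts transitively on $\Sphere$ by $\Quat$-linear isometries, which commute with $T_\Qi,T_\Qj,T_\Qk$ and preserve homogeneity, harmonicity and the inner product $\langle\cdot,\cdot\rangle$, and conjugation by such a map carries $G$ to the stabiliser of another point; so no generality is lost. With this $e$ and the splitting $x=(x_1,x')\in\Quat\times\Quat^{n-1}$, one has $\langle x,e\rangle=x_1$, so $\Re\langle x,e\rangle$, $|\langle x,e\rangle|^2=|x_1|^2$ and $|x|^2=|x_1|^2+|x'|^2$ are the basic invariants of Proposition~\ref{prp:Qzonalpoly}; since $n\geq 2$ these three functions are algebraically independent in $\Poly$, so (writing $\Poly^G$ for the $G$-invariant polynomials) for each $h$ the monomials $(\Re\langle\cdot,e\rangle)^j\,|\langle\cdot,e\rangle|^{2k}\,|\cdot|^{2l}$ with $j+2k+2l=h$ form a basis of $\Poly^G\cap\hPoly{h}$.

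For part~(i): each of $T_\Qi,T_\Qj,T_\Qk$ differentiates only the variable $x_1$, acting on $\Quat e\cong\Quat$ by left multiplication, and left multiplication by unit quaternions is isometric in each coordinate; hence $T_\Qu|x_1|^2=T_\Qu|x|^2=0$, so $\opMS$ commutes with multiplication by $|\langle\cdot,e\rangle|^2$ and by $|\cdot|^2$, while a short computation gives $\opMS(\Re\langle\cdot,e\rangle)^j=j(j+2)(\Re\langle\cdot,e\rangle)^j-j(j-1)\,|\langle\cdot,e\rangle|^2\,(\Re\langle\cdot,e\rangle)^{j-2}$. Thus on $\Poly^G\cap\hPoly{h}$ the operator $\opMS$ is triangular with respect to the above monomial basis, with diagonal entries $j(j+2)$; as $\opMS$ is self-adjoint for the inner product $B$ of Lemma~\ref{lem:polyinnerproduct} (and preserves this finite-dimensional space), it is diagonalisable there, and therefore its eigenspace for the eigenvalue $(h-2m)(h-2m+2)$, namely $\QPoly{h}{m}\cap\Poly^G$, has dimension $\#\{(k,l):k+l=m\}=m+1$. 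On the other hand, writing $q=h-2m$ and letting $V_q=|\langle\cdot,e\rangle|^q\,\pU_q(\Re\langle\cdot,e\rangle/|\langle\cdot,e\rangle|)$ — the degree-$q$ homogeneous part of the harmonic function $x_1\mapsto|x_1-e|^{-2}$ on $\Quat e\cong\R^4$, so that $V_q$ is harmonic there and $\opMS V_q=q(q+2)V_q$ — the $m+1$ polynomials $V_q\,|\langle\cdot,e\rangle|^{2(m-l)}\,|\cdot|^{2l}$, $l=0,\dots,m$, lie in $\QPoly{h}{m}$, are $G$-invariant, and are linearly independent; so they span $\QPoly{h}{m}\cap\Poly^G$, which is the content of~(i).

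For part~(ii): by~(i), a $G$-invariant element of $\QPoly{h}{m}$ has the form $f=V_q\cdot g$, where $g$ is a polynomial homogeneous of degree $m$ in the pair $(u,v)=(|x_1|^2,|x'|^2)$. Writing $\opGL=\opGL_{x_1}+\opGL_{x'}$ and using the harmonicity of $V_q$, Euler's relation, and the standard formula for the Laplacian of a function of $|x_1|^2$ (resp.\ $|x'|^2$), one obtains
\[
\opGL f=4V_q\bigl[u\,g_{uu}+(q+2)\,g_u+v\,g_{vv}+2(n-1)\,g_v\bigr].
\]
Substituting $g=v^m G(u/v)$ turns $\opGL f=0$ into the ODE $t(1+t)\,G''+[(q+2)-2(m+n-2)t]\,G'+m(m+2n-3)\,G=0$, and the further substitution $\tilde t=t/(1+t)$, $G=(1+t)^m\hat G(\tilde t)$ converts it into $\tilde t(1-\tilde t)\hat G''+[(q+2)-(q+2n)\tilde t]\hat G'+m(m+q+2n-1)\hat G=0$, which is precisely the (shifted Jacobi) differential equation satisfied by $\pG^{(2n-3,\,q+1)}_m$. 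A look at leading coefficients shows this equation has no polynomial solution of degree other than $m$, and that its polynomial solutions form a one-dimensional space; since $v^m G(u/v)=(u+v)^m\hat G(u/(u+v))$, with $(u+v)^m$ accounting for the factor $|x|^{2m}$ and $u/(u+v)$ for the argument $|\langle x,e\rangle|^2/|x|^2$, unravelling the substitutions gives exactly the expression in~(ii) (and shows $\QHarm{h}{m}\cap\Poly^G$ is one-dimensional).

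The step I expect to be most delicate is the computation in part~(ii): verifying the reduction of $\opGL f=0$ to the ODE, and then matching that ODE — after the change of variable $\tilde t=t/(1+t)$ — to the Jacobi equation with the precise parameters $(2n-3,\,h-2m+1)$, is routine but bookkeeping-heavy and error-prone. A secondary point needing care is rigour in the dimension counts (that $\opMS$ preserves the relevant finite-dimensional space and is diagonalisable on it, so that the triangular form determines the eigenspace dimensions exactly), together with the identity $\opMS V_q=q(q+2)V_q$, which follows from a direct computation or from recognising $V_q$ as the homogeneous extension of the degree-$q$ zonal spherical harmonic on $\R^4$. Alternatively, one could first derive $\dim(\QHarm{h}{m}\cap\Poly^G)\leq 1$ from the orthogonal decomposition $\QPoly{h}{m}=\QHarm{h}{m}\oplus|\cdot|^2\QPoly{h-2}{m-1}$ of Section~\ref{section:qsh} and the count in~(i), reducing~(ii) to merely checking that the displayed candidate is harmonic.
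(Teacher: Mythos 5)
Your argument is correct and complete, but it takes a genuinely different route from the paper's. Both proofs start from Proposition \ref{prp:Qzonalpoly} and the action of $\opMS$ and $\opGL$ on the invariant monomials $(\Re\langle x,e\rangle)^j\,|\langle x,e\rangle|^{2k}\,|x|^{2l}$; from there, however, the paper works purely with coefficients: membership in $\QPoly{h}{m}$ and harmonicity are turned into explicit two- and three-term recurrences for the $c_{k,l}$, which are solved in closed form and matched against the series expansions of $\pU_{h-2m}$ and $\pG^{(2n-3,h-2m+1)}_m$. You instead argue structurally: in (i) the dimension $m+1$ of the $G$-invariant part of $\QPoly{h}{m}$ comes from the triangular form of $\opMS$ together with its self-adjointness for $B$ (hence diagonalisability), and a spanning set is exhibited by recognising the Chebyshev factor as the degree-$(h-2m)$ zonal harmonic on $\Quat e \cong \R^4$; in (ii) harmonicity is converted into a hypergeometric ODE whose polynomial solutions form a one-dimensional space spanned by the shifted Jacobi polynomial. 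Your computations do check out: the ODE in $t$ is the hypergeometric equation with parameters $(-m,\,-(m+2n-3);\,h-2m+2)$ evaluated at $-t$, and $\tilde t = t/(1+t)$ is the Pfaff transformation carrying it to the equation for $\pG^{(2n-3,h-2m+1)}_m$. What your route buys is conceptual transparency (it explains \emph{why} Gegenbauer and Jacobi polynomials appear) and a clean proof of the ``if'' direction of (ii), which in the paper is only implicit (sufficiency there ultimately rests on the nonvanishing of $Z_{h,m}(\cdot,e)$, obtained afterwards from \eqref{eq:proj_norm_diagonal}); what the paper's route buys is that everything stays at the level of finite sums and elementary recurrences, with no ODE theory or special-function identities beyond the quoted coefficient formulas.
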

\begin{proof}
Let $\nablaGL$ denote the usual (Euclidean) gradient on $\Quat^{n} \cong \R^{4n}$. Then it is not difficult to compute that, for all $j,k,l \in \N$,
\begin{align*}
\nablaGL (\Re \langle x,e \rangle)^j &= j (\Re \langle x,e \rangle)^{j-1} e, & -\opGL (\Re \langle x,e \rangle)^j &= j(j-1) (\Re \langle x,e \rangle)^{j-2},\\
\nablaGL |\langle x,e \rangle|^{2k} &= 2k |\langle x,e \rangle|^{2k-2} \langle x,e\rangle e, & -\opGL |\langle x,e \rangle|^{2k} &= 4 k(k+1) |\langle x,e \rangle|^{2k-2},\\
\nablaGL |x|^{2l} &= 2l |x|^{2l-2} x, & -\opGL |x|^{2l} &= 4l(l+2n-1) |x|^{2l-2},
\end{align*}
(differentiation is always meant with respect to $x$) from which one may derive that
\begin{equation}\label{eq:zonal_GL}
\begin{split}
-\opGL \bigl( (\Re \langle x,e \rangle)^j \, &|\langle x,e \rangle|^{2k} \, |x|^{2l} \bigr) \\
&= j(j-1) (\Re \langle x,e \rangle)^{j-2} \, |\langle x,e \rangle|^{2k} \, |x|^{2l} \\
&\qquad + 4k(k+j+1) (\Re \langle x,e \rangle)^j \, |\langle x,e \rangle|^{2k-2} \, |x|^{2l} \\
&\qquad + 4l(l+2k+j+2n-1) (\Re \langle x,e \rangle)^j \, |\langle x,e \rangle|^{2k} \, |x|^{2l-2}.
\end{split}
\end{equation}

Similarly, for all $j,k,l \in \N$ and all  $\Qu \in \{\Qi,\Qj,\Qk\}$,
\begin{gather*}
T_\Qu |\langle x,e \rangle|^{2k} = 0,
 \qquad T_\Qu |x|^{2l} = 0, \\
T_\Qu (\Re \langle x,e \rangle)^j = j (\Re \langle x,e \rangle)^{j-1} \Re\langle x, \Qu e\rangle, \\
-\opMS (\Re \langle x,e \rangle)^j = j(j-1) (\Re \langle x,e \rangle)^{j-2} |\langle x,e \rangle|^2 - j(j+2) (\Re \langle x,e \rangle)^{j},
\end{gather*}
so
\begin{equation}\label{eq:zonal_MS}
\begin{split}
-\opMS \left( (\Re \langle x,e \rangle)^j \, |\langle x,e \rangle|^{2k} \, |x|^{2l} \right)
&= j(j-1) (\Re \langle x,e \rangle)^{j-2} \, |\langle x,e \rangle|^{2k+2} \, |x|^{2l} \\
&\qquad- j(j+2) (\Re \langle x,e \rangle)^j \, |\langle x,e \rangle|^{2k} \, |x|^{2l}.
\end{split}
\end{equation}

Let now $f \in \hPoly{h}$ be $G$-invariant, where $G = \QSym_e$. By Proposition \ref{prp:Qzonalpoly}, $f$ has the form
\begin{equation}\label{eq:hom_qzon}
f(x) = \sum_{\substack{k,l \in \N \\2k+2l \leq h}} c_{k,l} \, (\Re \langle x,e \rangle)^{h-2k-2l} \, |\langle x,e \rangle|^{2k} \, |x|^{2l}
\end{equation}
for some coefficients $c_{k,l} \in \C$. We now want to obtain conditions on the coefficients that correspond to $f$ being in $\QPoly{h}{m}$ and in $\hHarm{h}$.

Indeed, from \eqref{eq:zonal_MS}, we easily deduce that $f \in \QPoly{h}{m}$ if and only if
\[
(h-2k-2l+2)(h-2k-2l+1) c_{k-1,l} = -4(k+l-m)(h-m-k-l+1) c_{k,l},
\]
for all $k,l \in \N$ with $2k+2l \leq h$, where we stipulate that $c_{-1,l} = 0$. This recurrence relation implies that $c_{k,l}$ may be chosen arbitrarily for $k+l = m$, that $c_{k,l} = 0$ for $k+l < m$, and that the coefficients $c_{k,l}$ for $k+l > m$ are uniquely determined by the previous choices (in particular $c_{k,l} = 0$ for $l > m$). In other words, if we set
\begin{equation}\label{eq:qhom_newcoeff}
c_{m-l,l} = 2^{h-2m} a_l ,
\end{equation}
and define
\[
b^{q}_0 = 2^q, \qquad b^{q}_{j} = -\frac{(q-2j+2)(q-2j+1)}{4j(q-j+1)} b^{q}_{j-1},
\]
that is,
\[
b^{q}_j = (-1)^{-j} 2^{q-2j} \binom{q-j}{j},
\]
then
\[
c_{m-l+j,l} = b^{h-2m}_j a_l,
\]
and we may rewrite $f$ as
\begin{equation}\label{eq:qhomzon}
f(x) =  \sum_{j=0}^{\lfloor (h-2m)/2 \rfloor} b^{h-2m}_j \left(\frac{\Re \langle x,e \rangle}{|\langle x,e \rangle|}\right)^{h-2m-2j} \sum_{l=0}^m  a_l  \, |\langle x,e \rangle|^{h-2l} \, |x|^{2l}.
\end{equation}
Note that, in the above expression, the coefficients $a_l$ are freely chosen.

On the other hand, from \eqref{eq:zonal_GL}, we deduce that the polynomial $f$ given by \eqref{eq:hom_qzon} belongs to $\Harm$ if and only if
\begin{equation}\label{eq:harm_coeff}\begin{split}
(h-2k-2l)(h-2k-2l-1) c_{k,l}  &+ 4 (k+1)(h-k-2l)  c_{k+1,l} \\
&+ 4(l+1)(h-l+2n-2) c_{k,l+1} = 0
\end{split}\end{equation}
for all $k,l \in \N$, where we stipulate that $c_{k,l} = 0$ whenever $k < 0$ or $l<0$ or $2k+2l > h$. If we assume as before that $f \in \QPoly{h}{m}$, so $f$ is given by \eqref{eq:qhomzon}, and specialize the identity \eqref{eq:harm_coeff} to the case where $k = m-l-1$ (so $k+l < m$ and $c_{k,l} = 0$), we obtain that
\[
4(m-l)(h-m+1-l) c_{m-l,l} + 4(l+1)(h-l+2n-2) c_{m-l-1,l+1} = 0,
\]
that is, by \eqref{eq:qhom_newcoeff},
\[
4(m-l)(h-m+1-l) a_l + 4(l+1)(h-l+2n-2) a_{l+1} = 0.
\]
This shows that all the $a_l$ ($0 \leq l \leq m$) in this case are determined by the choice of $a_0$. In other words, if we set
\[
a_0 = \binom{h+2n-2}{m} a,
\]
and we define
\[
A^{n,h,m}_{0}= \binom{h+2n-2}{m}, \qquad A^{n,h,m}_{l+1} = -\frac{(m-l)(h-m-l+1)}{(l+1)(h-l+2n-2)} A^{n,h,m}_l,
\]
that is,
\[
A^{n,h,m}_l = (-1)^l \frac{(h-m+1)!}{(h+2n-2-m)!} \frac{(h-l+2n-2)!}{l! (m-l)! (h-m-l+1)!},
\]
then
\begin{multline*}
f(x) =  a \sum_{j=0}^{\lfloor h/2-m \rfloor} b^{h-2m}_j \left(\frac{\Re \langle x,e \rangle}{|\langle x,e \rangle|}\right)^{h-2m-2j} \\
\times \sum_{l=0}^m  A^{n,h,m}_l  \, \left(\frac{|\langle x,e \rangle|}{|x|}\right)^{2(m-l)} \, |\langle x,e \rangle|^{h-2m} \, |x|^{2m}.
\end{multline*}

The conclusion follows by comparing
$A^{n,h,m}_l$ and $b^{h-2m}_j$ with
the coefficients of $Q_{h-2m}$ and $\pG^{(2n-3,h-2m+1)}_m$.
\end{proof}

In order to complete the proof of Proposition \ref{prp:Qzonalharm}, it remains to determine the correct normalization factors for kernels of orthogonal projections.

\begin{lem}
Let $\VV$ be a finite-dimensional, $\QSym$-invariant subspace of $L^2(\Sphere)$ of continuous functions. Let $K$ be the integral kernel of the orthogonal projection of $L^2(\Sphere)$ onto $\VV$. Then
\begin{equation}\label{eq:proj_element}
K(\cdot,x) \in \VV
\end{equation}
for all $x \in \Sphere$,
\begin{equation}\label{eq:proj_invariance}
K(Tx,Ty) = K(x,y)
\end{equation}
for all $T \in \QSym$ and $x,y \in \Sphere$, and
\begin{equation}\label{eq:proj_norm_diagonal}
K(x,x) = \|K(\cdot,x)\|_2^2 = \dim \VV
\end{equation}
for all $x \in \Sphere$.
\end{lem}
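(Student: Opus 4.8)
The plan is to express the kernel $K$ through an orthonormal basis of $\VV$ and then read off all three assertions from that explicit formula, using two inputs established earlier: that $\sigma$ is $\QSym$-invariant (since $\QSym\subseteq\RSym=\group{O}(4n)$) and that $\QSymR=\Sp(n)\subseteq\QSym$ acts transitively on $\Sphere$. Fix an orthonormal basis $e_1,\dots,e_N$ of $\VV$, where $N=\dim\VV$; each $e_j$ is continuous by hypothesis. Since $P_\VV f=\sum_{j=1}^N \langle f,e_j\rangle_{L^2(\Sphere)}\,e_j$ for all $f\in L^2(\Sphere)$, interchanging the (finite) sum with the integral gives $(P_\VV f)(x)=\int_\Sphere\bigl(\sum_{j=1}^N e_j(x)\,\overline{e_j(y)}\bigr)f(y)\,d\sigma(y)$; as $\sum_j e_j(x)\overline{e_j(y)}$ is a continuous function on $\Sphere\times\Sphere$ and $P_\VV$ is the orthogonal projection onto $\VV$, this identifies the Schwartz kernel as the genuinely continuous function
\[
K(x,y)=\sum_{j=1}^N e_j(x)\,\overline{e_j(y)},
\]
a formula which does not depend on the chosen orthonormal basis.

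Assertion \eqref{eq:proj_element} is then immediate, since $K(\cdot,x)$ is the function $y\mapsto\sum_j\overline{e_j(x)}\,e_j(y)$, a linear combination of $e_1,\dots,e_N$, hence an element of $\VV$. For \eqref{eq:proj_invariance}, observe that every $T\in\QSym$ preserves $\Sphere$ and $\sigma$, so $f\mapsto f\circ T$ is a unitary operator on $L^2(\Sphere)$; since $\VV$ is $\QSym$-invariant, $e_1\circ T,\dots,e_N\circ T$ is again an orthonormal basis of $\VV$. Feeding this basis into the basis-independent formula above gives $K(x,y)=\sum_j e_j(Tx)\,\overline{e_j(Ty)}=K(Tx,Ty)$.

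For \eqref{eq:proj_norm_diagonal} I would first note that $K$ is self-adjoint ($\overline{K(y,x)}=K(x,y)$, clear from the formula) and implements $P_\VV$, so for every $g\in\VV$ one has $g(x)=(P_\VV g)(x)=\int_\Sphere K(x,y)\,g(y)\,d\sigma(y)=\overline{\langle K(\cdot,x),g\rangle_{L^2(\Sphere)}}$, i.e. $\langle g,K(\cdot,x)\rangle_{L^2(\Sphere)}=g(x)$ (the reproducing property). Taking $g=K(\cdot,x)\in\VV$ yields $\|K(\cdot,x)\|_{L^2(\Sphere)}^2=K(x,x)$; equivalently, directly from the formula, $K(x,x)=\sum_j|e_j(x)|^2=\int_\Sphere\sum_{j,k}e_j(x)\overline{e_j(y)}\,\overline{e_k(x)}e_k(y)\,d\sigma(y)=\int_\Sphere|K(y,x)|^2\,d\sigma(y)$. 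Finally, by \eqref{eq:proj_invariance} and transitivity of $\QSymR$ (hence of $\QSym$) on $\Sphere$, the function $x\mapsto K(x,x)$ is constant; integrating it against the probability measure $\sigma$ and using $\int_\Sphere|e_j|^2\,d\sigma=1$ gives $K(x,x)=\int_\Sphere K(y,y)\,d\sigma(y)=\sum_{j=1}^N\|e_j\|_{L^2(\Sphere)}^2=N=\dim\VV$.

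I do not expect a serious obstacle: the content is precisely that a finite-dimensional $\QSym$-invariant space of continuous functions on $\Sphere$ is a reproducing kernel Hilbert space whose reproducing kernel is $K$, with constant diagonal by homogeneity. The only points warranting a word of care are the identification of $\sum_j e_j(x)\overline{e_j(y)}$ with the \emph{distributional} Schwartz kernel $K$ (legitimate because $\VV$ is finite-dimensional and made of continuous functions, so $P_\VV$ really is integration against this continuous kernel) and the invocation of the transitivity of $\QSymR$ on $\Sphere$ together with the $\QSym$-invariance of $\sigma$, both of which are available from the earlier discussion.
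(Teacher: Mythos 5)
Your proposal is correct and follows essentially the same route as the paper: expand $K(x,y)=\sum_j e_j(x)\overline{e_j(y)}$ over an orthonormal basis, read off \eqref{eq:proj_element} and \eqref{eq:proj_invariance}, and use invariance plus transitivity of $\QSym$ on $\Sphere$ to see that $K(x,x)$ and $\|K(\cdot,x)\|_2^2$ are constant, then integrate against the probability measure $\sigma$ to evaluate them as $\dim\VV$. Your extra details (basis-independence of the formula, the reproducing-property identity $\|K(\cdot,x)\|_2^2=K(x,x)$) are harmless refinements of the same argument.
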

\begin{proof}
If $\{\phi_j\}_j$ is any orthonormal basis of $\VV$, then
\[
K(x,y) = \sum_j \phi_j(x) \overline{\phi_j(y)},
\]
and \eqref{eq:proj_element} follows. Moreover \eqref{eq:proj_invariance} is an immediate consequence of the $\QSym$-invariance of $\VV$.

By \eqref{eq:proj_invariance} and the transitivity of $\QSym$ on $\Sphere$, we obtain that $K(x,x)$ does not depend on $x \in \Sphere$. Integration over $\Sphere$ then gives that
\[
K(x,x) = \int_\Sphere K(y,y) \,d\sigma(y) = \sum_j \|\phi_j\|_2^2 = \dim \VV
\]
for all $x \in \Sphere$. Similarly, for all $y \in \Sphere$,
\[
\int_\Sphere |K(x,y)|^2 \,d\sigma(x) = \sum_j |\phi_j(y)|^2,
\]
but the left-hand side does not depend on $y$, again by \eqref{eq:proj_invariance} and transitivity; therefore, by integration over $\Sphere$,
\[
\int_\Sphere |K(x,y)|^2 \,d\sigma(x) = \sum_j \|\phi_j\|_2^2 = \dim\VV.
\]
This gives \eqref{eq:proj_norm_diagonal}.
\end{proof}

We now prove Proposition \ref{prp:Qzonalharm}. Let $Z_{h,m}$ be the integral kernel of the orthogonal projection of $L^2(\Sphere)$ onto $\QHarm{h}{m}$. For all $e \in \Sphere$, by \eqref{eq:proj_element} and \eqref{eq:proj_invariance},
$Z_{h,m}(\cdot,e)$ is a nonzero $\QSym_e$-invariant element of $\QHarm{h}{m}$. This, in conjunction with Proposition \ref{prp:Qhomzonalpoly}, proves part \ref{en:Qzonalharm2}.

As for part \ref{en:Qzonalharm1}, again from Proposition \ref{prp:Qhomzonalpoly} we obtain that
\[
Z_{h,m}(x,e) =  a \,  |\langle x,e \rangle|^{h-2m} \, \pG^{(2n-3,h-2m+1)}_{m}(|\langle x,e \rangle|^2) \,  \pU_{h-2m}\left(\frac{\Re \langle x,e \rangle}{|\langle x,e \rangle|}\right)
\]
for some $a \in \C$. On the other hand, by \eqref{eq:proj_norm_diagonal},
$Z_{h,m}(e,e) = \dim \QHarm{h}{m}$.
By Proposition \ref{prp:quatharmonics}\ref{en:quatharmonics_dim} and \eqref{eq:special_normalization} we then deduce that
\[
a = \frac{(h-2m+1)(h+2n-1)}{(2n-2)(2n-1)} \binom{h-m+2n-2}{2n-3},
\]
and part \ref{en:Qzonalharm1} follows as well.

\section{Weighted Plancherel estimates}

Thanks to the explicit formulas obtained in Proposition \ref{prp:Qzonalharm}\ref{en:Qzonalharm1}, we may now precisely describe the effect of multiplication by $\weight^4$ on the kernels $Z_{h,m}$, where $\weight$ is the weight defined in \eqref{eq:weight}.

\begin{prp}\label{prp:mult_coefficients}
Let $(h,m) \in \IQ$.
\begin{enumerate}[label=(\roman*)]
\item\label{en:mult_coefficients1} For all $x,y \in \Sphere$,
\[\begin{split}
|\langle x,y\rangle|^2 Z_{h,m}(x,y) &= c_{h,m}^\rightarrow Z_{h,m}(x,y) \\
&\qquad + c_{h,m}^\uparrow Z_{h+2,m+1}(x,y) +   c_{h,m}^\downarrow Z_{h-2,m-1}(x,y),
\end{split}\]
where
\begin{align*}
c_{h,m}^\uparrow &= \frac{(m+1)(h-m+2)}{(h+2n)(h+2n+1)},\\
c_{h,m}^\rightarrow &= \frac{1}{2} \left(1-\frac{(2n-4-h+2m)(h-2m+2n-2)}{(h+2n)(h+2n-2)}\right),\\
c_{h,m}^\downarrow &= \frac{(m+2n-3)(h-m+2n-2)}{(h+2n-3)(h+2n-2)}.
\end{align*}

\item\label{en:mult_coefficients2} For all $x,y \in \Sphere$,
\[\begin{split}
\weight(x,y)^4 Z_{h,m}(x,y) &= \gamma_{h,m}^\rightarrow Z_{h,m}(x,y) \\
&\qquad + \gamma_{h,m}^\uparrow Z_{h+2,m+1}(x,y) + \gamma_{h,m}^\downarrow Z_{h-2,m-1}(x,y)\\
&\qquad + \gamma_{h,m}^{\uparrow\uparrow} Z_{h+4,m+2}(x,y) + \gamma_{h,m}^{\downarrow\downarrow} Z_{h-4,m-2}(x,y),
\end{split}\]
where $\gamma_{h,m}^\rightarrow, \gamma_{h,m}^\uparrow, \gamma_{h,m}^\downarrow, \gamma_{h,m}^{\uparrow\uparrow}, \gamma_{h,m}^{\downarrow\downarrow} \in \R$ and
\[
\gamma_{h,m}^\rightarrow = (1-c_{h,m}^\rightarrow)^2 + c_{h,m}^\uparrow c_{h+2,m+1}^\downarrow + c_{h,m}^\downarrow c_{h-2,m-1}^\uparrow
\]
(here $c_{h-2,m-1}^\uparrow = 0$ when $h<2$ or $m<1$).

\item\label{en:mult_coefficients3} There exists $c_n \in (1,\infty)$ such that
\[
c_n^{-1} \left(\frac{m+1}{h+1}\right)^2 \leq \gamma_{h,m}^\rightarrow \leq c_n \left(\frac{m+1}{h+1}\right)^2.
\]
\end{enumerate}
\end{prp}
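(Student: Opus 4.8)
The plan is to establish the three parts in turn: \ref{en:mult_coefficients1} and \ref{en:mult_coefficients2} are essentially computational, while \ref{en:mult_coefficients3} carries the analytic content. For \ref{en:mult_coefficients1}, I would substitute the explicit formula of Proposition \ref{prp:Qzonalharm}\ref{en:Qzonalharm1} and use the following structural observation: the three kernels $Z_{h,m}$, $Z_{h+2,m+1}$ and $Z_{h-2,m-1}$ all have the form
\[
(\text{constant})\times |\langle x,y\rangle|^{h-2m}\,\pU_{h-2m}\!\left(\frac{\Re\langle x,y\rangle}{|\langle x,y\rangle|}\right)\times\pG^{(2n-3,\,h-2m+1)}_{m'}(|\langle x,y\rangle|^2),
\]
with the \emph{same} envelope $|\langle x,y\rangle|^{h-2m}\,\pU_{h-2m}(\cdots)$, the \emph{same} Jacobi parameters $(2n-3,h-2m+1)$, and only the degree $m'\in\{m,m+1,m-1\}$ of the Jacobi factor changing. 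Hence multiplication of $Z_{h,m}$ by $|\langle x,y\rangle|^2$ reduces precisely to the three-term recurrence \eqref{eq:special_weight} applied to $t\,\pG^{(2n-3,h-2m+1)}_m(t)$ at $t=|\langle x,y\rangle|^2$ (here $\alpha=2n-3$, $\beta=h-2m+1$, so $2m+\alpha+\beta=h+2n-2$). The coefficients $c_{h,m}^\rightarrow,c_{h,m}^\uparrow,c_{h,m}^\downarrow$ then arise from the three coefficients of \eqref{eq:special_weight} after multiplying by the normalisation-constant ratios $1$, $C_{h,m}/C_{h+2,m+1}$ and $C_{h,m}/C_{h-2,m-1}$, where $C_{h,m}$ is the prefactor in Proposition \ref{prp:Qzonalharm}\ref{en:Qzonalharm1}; these ratios collapse to elementary quotients of binomials, and a short computation checks that the products are the stated expressions. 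The case $m=0$ needs no separate treatment, as the $\pG_{-1}$ term in \eqref{eq:special_weight} vanishes, in accordance with the convention $Z_{h-2,-1}=0$; and since every quantity involved is a polynomial in $\Re\langle x,y\rangle$ and $|\langle x,y\rangle|^2$, the identity, once checked where $\langle x,y\rangle\neq0$, extends by continuity to all of $\Sphere\times\Sphere$.

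For \ref{en:mult_coefficients2}, since $\weight(x,y)^2=1-|\langle x,y\rangle|^2$ we have $\weight(x,y)^4 Z_{h,m}=(I-M)^2 Z_{h,m}$, where $M$ is multiplication by $|\langle x,y\rangle|^2$, acting on the family $\{Z_{h,m}\}_{(h,m)\in\IQ}$ by the tridiagonal rule of part \ref{en:mult_coefficients1}. Applying that rule twice and collecting terms produces the five-term expansion; the coefficient of $Z_{h,m}$ collects exactly $(1-c_{h,m}^\rightarrow)^2$ (staying put twice), $c_{h,m}^\uparrow c_{h+2,m+1}^\downarrow$ (up then down) and $c_{h,m}^\downarrow c_{h-2,m-1}^\uparrow$ (down then up), which is the asserted formula (the last summand being $0$ when $h<2$ or $m<1$). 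That $\gamma_{h,m}^\rightarrow,\gamma_{h,m}^\uparrow,\gamma_{h,m}^\downarrow,\gamma_{h,m}^{\uparrow\uparrow},\gamma_{h,m}^{\downarrow\downarrow}$ are all real is immediate, the $c$'s of part \ref{en:mult_coefficients1} being manifestly real rational expressions in $h,m,n$.

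Part \ref{en:mult_coefficients3} is where I expect the actual work. For the lower bound I would discard the two nonnegative products and keep only $\gamma_{h,m}^\rightarrow\ge(1-c_{h,m}^\rightarrow)^2$. Writing $1-c_{h,m}^\rightarrow$ over the denominator $2(h+2n)(h+2n-2)$, its numerator is $(h+2n)(h+2n-2)+(2n-4-h+2m)(h-2m+2n-2)$, and a direct expansion identifies this with $\lambda_{h,m}^\opL+8(n-1)^2$, where $\lambda_{h,m}^\opL=4m(h-m+1)+4(n-1)h$ is the $\opL$-eigenvalue of Proposition \ref{prp:quatharmonics}\ref{en:quatharmonics_eigen}. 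This identity is the one genuinely non-mechanical step: it shows at once that $1-c_{h,m}^\rightarrow>0$, hence $\gamma_{h,m}^\rightarrow>0$, for every $(h,m)\in\IQ$ (so there is no division issue and the finitely many small-$h$ cases are subsumed), and, using $0\le 2m\le h$ and $n\ge2$, it gives $\lambda_{h,m}^\opL+8(n-1)^2\sim(m+1)(h+1)$ and $(h+2n)(h+2n-2)\sim(h+1)^2$ (implied constants depending only on $n$), whence $1-c_{h,m}^\rightarrow\sim(m+1)/(h+1)$ and the lower bound on $\gamma_{h,m}^\rightarrow$ follows.

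For the upper bound in \ref{en:mult_coefficients3}, I would bound, using the explicit formulas of part \ref{en:mult_coefficients1}, each of $c_{h,m}^\uparrow$, $c_{h+2,m+1}^\downarrow$, $c_{h,m}^\downarrow$, $c_{h-2,m-1}^\uparrow$: each is a ratio of a numerator that is the product of one factor $\lesssim m+1$ and one factor $\lesssim h+1$ (up to $n$-dependent constants, using $n\ge2$) over a denominator $\ge(h+1)^2$, hence each is $\lesssim(m+1)/(h+1)$; thus each of the two product terms is $\lesssim(m+1)^2/(h+1)^2$, and together with $(1-c_{h,m}^\rightarrow)^2\lesssim(m+1)^2/(h+1)^2$ this gives $\gamma_{h,m}^\rightarrow\lesssim(m+1)^2/(h+1)^2$. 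Taking $c_n$ to be the maximum of the finitely many purely $n$-dependent constants produced above — automatically greater than $1$ — finishes the proof. Aside from this, the point most likely to require care is the bookkeeping in part \ref{en:mult_coefficients1}: verifying that the rescaled recurrence coefficients reproduce the stated $c$'s verbatim.
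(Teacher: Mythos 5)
Your proposal is correct and follows essentially the same route as the paper: part \ref{en:mult_coefficients1} via the three-term recurrence \eqref{eq:special_weight} applied to the Jacobi factor (with $\alpha=2n-3$, $\beta=h-2m+1$, so $2m+\alpha+\beta=h+2n-2$) combined with the ratios of the normalising prefactors, part \ref{en:mult_coefficients2} by iterating the resulting tridiagonal rule for $\weight^2=1-|\langle x,y\rangle|^2$, and part \ref{en:mult_coefficients3} by showing $1-c^\rightarrow_{h,m}\sim c^\uparrow_{h,m}\sim c^\downarrow_{h,m}\sim(m+1)/(h+1)$. Your identification of the numerator of $1-c^\rightarrow_{h,m}$ with $\lambda^{\opL}_{h,m}+8(n-1)^2$ is a correct (and pleasant) algebraic variant of the paper's factorisation $(h+2n)(2n-4)+2(m+1)(h-m+2)$, and leads to the same asymptotics.
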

\begin{proof}
From \eqref{eq:special_weight} we obtain that
\[\begin{split}
t \, &\pG^{(2n-3,h-2m+1)}_m(t) \\
&= \frac{(m+1)(h-m+2n-1)}{(h+2n)(h+2n-1)} \pG^{(2n-3,h-2m+1)}_{m+1}(t) \\
&\qquad + \frac{1}{2} \left(1-\frac{(2n-4-h+2m)(h-2m+2n-2)}{(h+2n)(h+2n-2)}\right) \pG^{(2n-3,h-2m+1)}_{m}(t) \\
&\qquad + \frac{(m+2n-3)(h-m+1)}{(h+2n-1)(h+2n-2)} \pG^{(2n-3,h-2m+1)}_{m-1}(t).
\end{split}\]
If we write the expression for $|\langle x,y\rangle|^2 Z_{h,m}(x,y)$ given by Proposition \ref{prp:Qzonalharm}\ref{en:Qzonalharm1} and employ the above identity with $t = |\langle x,y\rangle|^2$, simple manipulations give part \ref{en:mult_coefficients1}.

From this we deduce in particular that
\[\begin{split}
\weight(x,y)^2 Z_{h,m}(x,y) &= (1-c_{h,m}^\rightarrow) Z_{h,m}(x,y) \\
&\qquad - c_{h,m}^\uparrow Z_{h+2,m+1}(x,y) - c_{h,m}^\downarrow Z_{h-2,m-1}(x,y),
\end{split}\]
and iteration of this identity gives part \ref{en:mult_coefficients2}.

From the formulas in part \ref{en:mult_coefficients1}, it is easily seen that
\[
c^\uparrow_{h,m} \simeq \frac{m+1}{h+1}, \qquad c^\downarrow_{h,m} \simeq \frac{m+1}{h+1},
\]
(note that $0 \leq 2m \leq h$), hence also
\[
c_{h,m}^\uparrow c_{h+2,m+1}^\downarrow + c_{h,m}^\downarrow c_{h-2,m-1}^\uparrow \simeq \left(\frac{m+1}{h+1}\right)^2.
\]
Moreover
\[\begin{split}
1 - c_{h,m}^\rightarrow &= \frac{1}{2} \left(1+\frac{(2n-4-h+2m)(h-2m+2n-2)}{(h+2n)(h+2n-2)}\right) \\
&= \frac{(h+2n)(2n-4)+2(m+1)(h-m+2)}{(h+2n)(h+2n-2)} \\
&\simeq \frac{m+1}{h+1}.
\end{split}\]
Hence part \ref{en:mult_coefficients3} follows from the formula for $\gamma^\rightarrow_{h,m}$ in part \ref{en:mult_coefficients2}.
\end{proof}

We define a ``kernel polynomial'' to be any finite linear combination of the kernels $Z_{h,m}$; in other words, a kernel polynomial $K$ is an expression of the form
\begin{equation}\label{eq:kernelpolynomial}
K = \sum_{(h,m) \in \IQ} a_{h,m} \, Z_{h,m},
\end{equation}
for some coefficients $a_{h,m} \in \C$, all but finitely many of which are zero. Note that, by \eqref{eq:proj_element} and \eqref{eq:proj_norm_diagonal}, if $K$ is given by \eqref{eq:kernelpolynomial} then
\begin{equation}\label{eq:kernelpolynomial_norm}
\int_\Sphere |K(x,y)|^2 \,d\sigma(x) = \sum_{(h,m) \in \IQ} \dim \QHarm{h}{m} \, |a_{h,m}|^2
\end{equation}
for all $y \in \Sphere$.

Proposition \ref{prp:mult_coefficients} tells us that the operator of multiplication by $\weight^4$ does not act diagonally on the basis $\{Z_{h,m}\}_{(h,m) \in \IQ}$ of kernel polynomials; however only a few parallels to the main diagonal in the matrix of this multiplication operator are nonzero. Hence, as we shall show below, this multiplication operator may be majorized (in $L^2$) by its diagonal component, and for the latter we may clearly describe the fractional powers.

For a kernel polynomial $K$ of the form \eqref{eq:kernelpolynomial} and all $\alpha \in [0,\infty)$, we define $M^\alpha K$ by
\begin{equation}\label{eq:spectral_mult}
M^\alpha K = \sum_{(h,m) \in \IQ} (5 \gamma_{h,m}^\rightarrow)^{\alpha/4} a_{h,m} \, Z_{h,m},
\end{equation}
where the coefficients $\gamma_{h,m}^\rightarrow$ are as in Proposition \ref{prp:mult_coefficients}.

\begin{prp}\label{prp:interpol_ineq}
For all kernel polynomials $K$, all $\alpha \in [0,2]$, and all $y \in \Sphere$,
\begin{equation}\label{eq:interpol_ineq}
\| \weight(\cdot,y)^\alpha K(\cdot,y) \|_2 \leq \| M^\alpha K(\cdot,y) \|_2.
\end{equation}
\end{prp}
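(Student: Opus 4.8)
The plan is to recast \eqref{eq:interpol_ineq} as an operator inequality on a fixed subspace of $L^2(\Sphere)$ and then to deduce the general exponent $\alpha\in[0,2]$ from the endpoint $\alpha=2$ by operator monotonicity. Fix $y\in\Sphere$ and let $V_y\subseteq L^2(\Sphere)$ be the closed span of $\{Z_{h,m}(\cdot,y)\}_{(h,m)\in\IQ}$. Since $0\le\weight(\cdot,y)\le 1$, multiplication by $\weight(\cdot,y)^4$ is a bounded nonnegative self-adjoint operator $\mathcal{M}_y$ on $L^2(\Sphere)$; by Proposition~\ref{prp:mult_coefficients}\ref{en:mult_coefficients2} it maps each $Z_{h,m}(\cdot,y)$ to a finite linear combination of functions $Z_{h',m'}(\cdot,y)$, so it preserves $V_y$, and (being self-adjoint) also $V_y^\perp$. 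Hence $\mathcal{M}_y$ commutes with the orthogonal projection onto $V_y$, and so does every power $\mathcal{M}_y^{\alpha/2}$, which is nothing but multiplication by $\weight(\cdot,y)^{2\alpha}$; thus, for $f\in V_y$, $\|\weight(\cdot,y)^\alpha f\|_2^2=\langle\mathcal{M}_y^{\alpha/2}f,f\rangle=\langle(\mathcal{M}_y|_{V_y})^{\alpha/2}f,f\rangle$. On the other hand, by \eqref{eq:quaternionicharmonics}, \eqref{eq:proj_element} and \eqref{eq:kernelpolynomial_norm} the $Z_{h,m}(\cdot,y)$ are pairwise orthogonal with $\|Z_{h,m}(\cdot,y)\|_2^2=\dim\QHarm{h}{m}$, and by \eqref{eq:spectral_mult} the map $M^\alpha$ acts on $V_y$ as $D_y^{\alpha/2}$, where $D_y$ is the nonnegative self-adjoint operator on $V_y$ with $D_y Z_{h,m}(\cdot,y)=5\gamma_{h,m}^\rightarrow Z_{h,m}(\cdot,y)$ (bounded because $\gamma_{h,m}^\rightarrow$ is, by Proposition~\ref{prp:mult_coefficients}\ref{en:mult_coefficients3}). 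Since $\alpha/2\in[0,1]$, the L\"owner--Heinz inequality turns $\mathcal{M}_y|_{V_y}\le D_y$ into $(\mathcal{M}_y|_{V_y})^{\alpha/2}\le D_y^{\alpha/2}$; so the whole proposition reduces to the single operator inequality $\mathcal{M}_y|_{V_y}\le D_y$, that is, to the case $\alpha=2$ of \eqref{eq:interpol_ineq}.

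For that case I would start from $\weight^2=1-|\langle\cdot,\cdot\rangle|^2$ and Proposition~\ref{prp:mult_coefficients}\ref{en:mult_coefficients1}, which give the tridiagonal identity
\[
\weight(\cdot,y)^2 Z_{h,m}(\cdot,y)=(1-c_{h,m}^\rightarrow)Z_{h,m}(\cdot,y)-c_{h,m}^\uparrow Z_{h+2,m+1}(\cdot,y)-c_{h,m}^\downarrow Z_{h-2,m-1}(\cdot,y).
\]
For a kernel polynomial $K=\sum_{(h,m)\in\IQ}a_{h,m}Z_{h,m}$ one then collects the coefficient of $Z_{h,m}(\cdot,y)$ in $\weight(\cdot,y)^2 K(\cdot,y)$, namely $a_{h,m}(1-c_{h,m}^\rightarrow)-a_{h-2,m-1}c_{h-2,m-1}^\uparrow-a_{h+2,m+1}c_{h+2,m+1}^\downarrow$, and bounds $\|\weight(\cdot,y)^2 K(\cdot,y)\|_2^2$ using orthogonality, \eqref{eq:kernelpolynomial_norm} and the elementary estimate $|u+v+w|^2\le 3(|u|^2+|v|^2+|w|^2)$. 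After reindexing the three resulting sums and using the relation $c_{h,m}^\uparrow\dim\QHarm{h+2}{m+1}=c_{h+2,m+1}^\downarrow\dim\QHarm{h}{m}$ — which merely records the self-adjointness of multiplication by $\weight(\cdot,y)^2$, as one sees by computing the $\bigl(Z_{h,m}(\cdot,y),Z_{h+2,m+1}(\cdot,y)\bigr)$ matrix entry in two ways, or directly from Proposition~\ref{prp:mult_coefficients}\ref{en:mult_coefficients1} and Proposition~\ref{prp:quatharmonics}\ref{en:quatharmonics_dim} — the three sums recombine, through the formula for $\gamma_{h,m}^\rightarrow$ in Proposition~\ref{prp:mult_coefficients}\ref{en:mult_coefficients2}, into $3\sum_{(h,m)\in\IQ}\dim\QHarm{h}{m}\,\gamma_{h,m}^\rightarrow\,|a_{h,m}|^2$. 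Since $3\le 5$, this is at most $\|M^2 K(\cdot,y)\|_2^2$, which is exactly the inequality $\langle\mathcal{M}_y K(\cdot,y),K(\cdot,y)\rangle\le\langle D_y K(\cdot,y),K(\cdot,y)\rangle$; letting $K(\cdot,y)$ range over a dense subset of $V_y$ gives $\mathcal{M}_y|_{V_y}\le D_y$, and the proof is complete.

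The conceptual content here is modest; the delicate points are bookkeeping. The main thing to watch is the presence of the dimension weights $\dim\QHarm{h}{m}$ when one passes between the orthogonal family $\{Z_{h,m}(\cdot,y)\}$ and an orthonormal basis of $V_y$: this is precisely where the relation $c_{h,m}^\uparrow\dim\QHarm{h+2}{m+1}=c_{h+2,m+1}^\downarrow\dim\QHarm{h}{m}$ is needed to make the three reindexed sums line up. One must also handle the boundary of $\IQ$, where some of $Z_{h\pm2,m\pm1}$ vanish and the corresponding coefficients simply drop out, consistently with the convention that $Z_{h,m}=0$ off the range $2m\le h$ and with the parenthetical remark in Proposition~\ref{prp:mult_coefficients}\ref{en:mult_coefficients2}. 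Finally, note that the argument actually yields \eqref{eq:interpol_ineq} with the constant $3$ in place of the $5$ in \eqref{eq:spectral_mult}, so the precise value of the constant is not critical.
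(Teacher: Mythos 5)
Your proof is correct, but both of its steps take a genuinely different route from the paper's. For the reduction to the endpoint $\alpha=2$, the paper uses complex interpolation between the weighted spaces $\ell^2_\alpha(\IQ)$ and $L^2_\alpha(\Sphere)$, whereas you invoke the L\"owner--Heinz inequality for the two positive operators $\mathcal{M}_y$ and $D_y$ on $V_y$; this is a clean alternative (the only slip is that $M^\alpha$ acts on $V_y$ as $D_y^{\alpha/4}$, not $D_y^{\alpha/2}$ --- but the identity you actually use, $\|M^\alpha K(\cdot,y)\|_2^2=\langle D_y^{\alpha/2}K(\cdot,y),K(\cdot,y)\rangle$, is the correct one, so the chain of inequalities is unaffected). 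For the endpoint itself, the paper splits $K=\sum_{j=0}^4K_j$ according to the residue of $m$ modulo $5$, so that the off-diagonal part of $5\weight^4K_j$ is orthogonal to $K_j$ and Cauchy--Schwarz over the five pieces produces the constant $5$; you instead expand the tridiagonal action of $\weight(\cdot,y)^2$ on the coefficients, apply $|u+v+w|^2\le 3(|u|^2+|v|^2+|w|^2)$, and recombine via the detailed-balance identity $c^\uparrow_{h,m}\dim\QHarm{h+2}{m+1}=c^\downarrow_{h+2,m+1}\dim\QHarm{h}{m}$, which does hold (it records the self-adjointness of multiplication by $\weight(\cdot,y)^2$ and can be checked directly from Propositions \ref{prp:mult_coefficients}\ref{en:mult_coefficients1} and \ref{prp:quatharmonics}\ref{en:quatharmonics_dim}). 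Your computation is more explicit and yields the sharper constant $3$ in place of $5$, at the price of needing that identity and some reindexing care at the boundary of $\IQ$ (which you correctly flag); the paper's mod-$5$ decomposition avoids both and needs only the formula for $\gamma^\rightarrow_{h,m}$, but is wedded to the constant $5$ built into \eqref{eq:spectral_mult}. Either way \eqref{eq:interpol_ineq} follows.
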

\begin{proof}
Let $T$ be the linear operator that, to a sequence $(a_{h,m})_{(h,m) \in \IQ}$ of complex numbers, all but finitely many of which are zero, associates the kernel polynomial $K$ given by \eqref{eq:kernelpolynomial}. Then, by \eqref{eq:kernelpolynomial_norm}, it is easily seen that the the estimate \eqref{eq:interpol_ineq} is equivalent to the statement that, for all $y \in \Sphere$, the linear operator $T$ is bounded from
\[
\ell^2_\alpha(\IQ) = \left\{ (a_{h,m})_{(h,m) \in \IQ} \tc \sum_{(h,m) \in \IQ} \dim \QHarm{h}{m} \, (5\gamma^\rightarrow_{h,m})^{\alpha/2} \, |a_{h,m}|^2 < \infty \right\}
\]
to
\[
L^2_\alpha(\Sphere) = \left\{ f \in L^0(\Sphere) \tc \int_\Sphere \weight(x,y)^{2\alpha} \, |f(x)|^2 \,d\sigma(x) < \infty\right\}
\]
with operator norm at most $1$. By complex interpolation between weighted $L^2$-spaces, it is then sufficient to show \eqref{eq:interpol_ineq} for $\alpha = 0$ and $\alpha = 2$. Indeed the case $\alpha = 0$ is trivial (since equality holds in \eqref{eq:interpol_ineq} in that case), so we are reduced to proving the inequality for $\alpha =2$.

Let $K$ be a kernel polynomial as in \eqref{eq:kernelpolynomial}. Decompose $K = \sum_{j=0}^4 K_j$, where
\[
K_j = \sum_{\substack{(h,m) \in \IQ \\ m \equiv j}} a_{h,m} Z_{h,m}
\]
for $j=0,1,2,3,4$ and $\equiv$ denotes congruence modulo $5$. Then by \eqref{eq:spectral_mult} and Proposition \ref{prp:mult_coefficients}\ref{en:mult_coefficients2} it is easily seen that
\[
5\weight^4 K_j = M^4 K_j + \tilde K_j,
\]
where $\tilde K_j(\cdot,y)$ is orthogonal to $K_j(\cdot,y)$ in $L^2(\Sphere)$. Hence, by the Cauchy--Schwarz inequality and orthogonality,
\[\begin{split}
\| \weight(\cdot,y)^2 K(\cdot,y) \|_2^2
&\leq 5 \sum_{j=0}^4 \| \weight(\cdot,y)^2 K_j(\cdot,y) \|_2^2 \\
&= 5 \sum_{j=0}^4 \langle \weight(\cdot,y)^4 K_j(\cdot,y) , K_j(\cdot,y) \rangle \\
&= \sum_{j=0}^4 \langle M^4 K_j(\cdot,y) , K_j(\cdot,y) \rangle \\
&= \sum_{j=0}^4 \| M^2 K_j(\cdot,y) \|_2^2 = \| M^2 K(\cdot,y) \|_2^2,
\end{split}\]
and we are done.
\end{proof}

\section{The multiplier theorem}\label{section:multipliers}

We are now ready to prove Proposition \ref{prp:est}, from which our main theorems follow.
The next statement collects a few estimates that will be useful in the proof.

\begin{lem}\label{lem:useful_estimates}
The following estimates hold.
\begin{enumerate}[label=(\roman*)]
\item\label{en:dimension_estimate} For all $(h,m) \in \IQ$,
\[
\dim \QHarm{h}{m} \simeq (h+1)^{2n-2} (m+1)^{2n-3} (h-2m+1)^2.
\]
\item\label{en:distance_estimate} The sub-Riemannian distance $\dist$ on $\Sphere$ satisfies
\begin{equation}\label{eq:dist_equivalence}
\dist(x,y) \simeq |1-\langle x,y \rangle|^{1/2}
\end{equation}
for all $x,y \in \Sphere$.
\item\label{en:balls_estimate} The sub-Riemannian balls $B(x,r)$ satisfy
\[
\sigma(B(x,r)) \simeq \min\{r^{4n+2},1\}
\]
for all $x \in \Sphere$ and $r \in (0,\infty)$.
\end{enumerate}
\end{lem}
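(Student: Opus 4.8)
The plan is to prove the three parts separately, in increasing order of difficulty. Part \ref{en:dimension_estimate} is pure bookkeeping: starting from the closed formula for $\dim\QHarm{h}{m}$ in Proposition \ref{prp:quatharmonics}\ref{en:quatharmonics_dim}, I would use that, for each fixed $n>1$, the binomial coefficient $\binom{k+2n-3}{2n-3}$ is a polynomial in $k\in\N$ of degree $2n-3$ with positive leading coefficient, so $\binom{k+2n-3}{2n-3}\sim(k+1)^{2n-3}$; applying this with $k=m$ and with $k=h-m$, noting that $0\le 2m\le h$ forces $h-m+1\sim h+1$, and using $(h+2n-1)/[(2n-2)(2n-1)]\sim h+1$, the stated equivalence $\dim\QHarm{h}{m}\sim(h+1)^{2n-2}(m+1)^{2n-3}(h-2m+1)^2$ follows at once.

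For the lower bound in part \ref{en:distance_estimate} I would argue along horizontal curves. Let $\gamma\colon[0,L]\to\Sphere$ be a unit-speed horizontal curve with $\gamma(0)=x$, and put $f(t)=\langle x,\gamma(t)\rangle\in\Quat$, so that the tautological decomposition $x=f(t)\,\gamma(t)+w(t)$ defines $w(t)=x-f(t)\gamma(t)\in H_{\gamma(t)}\Sphere$ with $|w(t)|=\weight(x,\gamma(t))$. Since $\dot\gamma(t)\in H_{\gamma(t)}\Sphere$ forces $\langle\gamma(t),\dot\gamma(t)\rangle=0$, we get $f'(t)=\langle w(t),\dot\gamma(t)\rangle$ and hence $|f'(t)|\le\weight(x,\gamma(t))$; differentiating $\weight(x,\gamma(t))^2=1-|f(t)|^2$ and using $|f|\le1$ similarly yields $|\tfrac{d}{dt}\weight(x,\gamma(t))|\le1$ (with a routine regularisation at points where $\weight$ vanishes). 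As $\weight(x,\gamma(0))=0$, this gives $\weight(x,\gamma(t))\le t$, whence $|1-\langle x,y\rangle|=|f(L)-f(0)|\le\int_0^L t\,dt=L^2/2$; taking the infimum over $\gamma$ yields $\dist(x,y)\ge\sqrt2\,|1-\langle x,y\rangle|^{1/2}$, globally.

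For the upper bound in part \ref{en:distance_estimate} I would first note that, since $\Sphere$ is compact and $H\Sphere$ is bracket-generating, $\dist$ is a finite-diameter metric while $|1-\langle x,y\rangle|$ is bounded and vanishes only on the diagonal, so it suffices to treat $x,y$ close. Using transitivity of $\QSym$ on $\Sphere$ by sub-Riemannian isometries I would reduce to $x=e_1=(1,0,\dots,0)$, then use the stabiliser $\QSym_{e_1}\cong\Sp(1)\times\Sp(n-1)$ (as in the proof of Proposition \ref{prp:Qzonalpoly}) to normalise $y$ to the form $(y_1,0,\dots,0,s)$ with $s=\sqrt{1-|y_1|^2}\ge0$; here $\langle e_1,y\rangle=\overline{y_1}$, so the target is a horizontal curve of length $\lesssim|1-y_1|^{1/2}$. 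I would build such a curve inside the coordinate copy of $\Sphere^7\subset\Quat^2$ as a concatenation of: a ``real rotation'' $t\mapsto(\cos t,0,\dots,0,\sin t)$, which one checks is horizontal of unit speed and brings $|y_1|$ to its target value over a length $\sim s$; followed by a bracket-type excursion of the form $t\mapsto(\alpha(t)\exp(p(t)v),0,\dots,0,\beta(t)\exp(q(t)v))$ with $v\in\Im\Quat$ a unit and $\alpha^2+\beta^2=1$, $\alpha^2\dot p+\beta^2\dot q=0$ (the same computation shows these are horizontal), chosen to rotate the phase of $y_1$ into place while returning the remaining data unchanged, which by the standard step-$2$ mechanism costs length $\sim|\Im y_1|^{1/2}$. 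This gives $\dist(e_1,y)\lesssim s+|\Im y_1|^{1/2}$, and since $s^2=1-|y_1|^2\le2|1-y_1|$ and $|\Im y_1|\le|1-y_1|$ this is $\lesssim|1-y_1|^{1/2}=|1-\langle e_1,y\rangle|^{1/2}$. Alternatively, this upper bound may be imported from the ball--box theorem applied to the Carnot group model of $(\Sphere,\opL)$ supplied by the quaternionic Cayley transform of \cite{astengo_cayley_2004}.

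For part \ref{en:balls_estimate} I would combine part \ref{en:distance_estimate}, which gives $B(x,r)\approx\{y\in\Sphere\tc|1-\langle x,y\rangle|<c\,r^2\}$ up to absolute control on the constant $c$, with transitivity of $\QSym$ and invariance of $\sigma$ to reduce to $x=e_1$, so that $\langle e_1,y\rangle=\overline{y_1}$. The image of $\sigma$ under $y\mapsto y_1\in\Quat\cong\R^4$ is, up to a constant, $(1-|y_1|^2)_+^{2n-3}\,dy_1$, so it remains to evaluate $\int_{\{u\in\R^4\tc|1-u|<c r^2\}}(1-|u|^2)_+^{2n-3}\,du$; writing $u=1-a+v$ with $a\in\R$ and $v\in\Im\Quat\cong\R^3$, so that $1-|u|^2=2a-a^2-|v|^2$, this becomes a short computation in the variables $(a,|v|)$ giving $\sim(r^2)^{2n+1}=r^{4n+2}$ for $r\lesssim1$ (using $n>1$), while for $r\gtrsim1$ the ball exhausts $\Sphere$; thus $\sigma(B(x,r))\sim\min\{r^{4n+2},1\}$. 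The only genuinely delicate point is the bracket-type excursion in the upper bound of part \ref{en:distance_estimate} — exhibiting an explicit horizontal curve that realises a prescribed imaginary displacement $\eta$ with length $\lesssim\eta^{1/2}$ and with constants uniform in all parameters — and routing the upper bound through the Cayley transform and the ball--box theorem is the cleanest way to sidestep it; everything else is linear algebra, a soft differential inequality, or an elementary Euclidean integral.
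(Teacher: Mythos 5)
Your proposal is correct, and in two of the three parts it genuinely differs from the paper's argument. Part \ref{en:dimension_estimate} is the same bookkeeping in both. For part \ref{en:distance_estimate} the paper does not separate upper and lower bounds at all: it reduces by $\QSym$-invariance and compactness to a neighbourhood of $e$, observes that $|1-x_1|^{1/2}\sim|x'|+|\Im x_1|^{1/2}$ there, and then invokes Bella\"iche's result that $(x',\Im x_1)$ are linearly adapted coordinates for the $2$-step structure, so that $\dist(x,e)\sim|x'|+|\Im x_1|^{1/2}$ — i.e.\ both inequalities are delegated to the ball--box theorem. Your lower bound via the differential inequality $|f'(t)|\le\weight(x,\gamma(t))\le t$ along horizontal curves is a different and arguably nicer argument: it is global, elementary, and produces an explicit constant ($\dist\ge\sqrt2\,|1-\langle x,y\rangle|^{1/2}$) with no compactness step; I checked the key identities ($w(t)\in H_{\gamma(t)}\Sphere$, $|w|=\weight$, $f'=\langle w,\dot\gamma\rangle$) and they hold. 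Your upper bound is the one place where the proposal is a sketch rather than a proof — the bracket-type excursion with uniform constants is exactly the content of the ball--box theorem — but since you explicitly offer adapted coordinates/ball--box as the fallback, you end up on the same ground the paper stands on, so there is no gap. For part \ref{en:balls_estimate} the paper integrates directly in the adapted coordinates $(x',\Im x_1)$, whereas you push $\sigma$ forward to the density $(1-|y_1|^2)_+^{2n-3}\,dy_1$ on $\Quat$ and compute a Euclidean integral over $\{|1-u|<c r^2\}$; your route makes part \ref{en:balls_estimate} a corollary of part \ref{en:distance_estimate} and avoids coordinates on the sphere, at the cost of the (correct but slightly fiddly) two-regime computation in $(a,|v|)$ whose dominant contribution $\rho^{2n+1}$ comes from $a\in(\rho^2,\rho)$. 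Both computations give $r^{4n+2}$.
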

\begin{proof}
\ref{en:dimension_estimate}. This is an immediate consequence of Proposition \ref{prp:quatharmonics}\ref{en:quatharmonics_dim}.

\ref{en:distance_estimate}. Since both sides of \eqref{eq:dist_equivalence} are $\QSym$-invariant, it is sufficient to consider the case where $y = e = (1,0,\dots,0)$. Hence, if we write $x = (x_1,\dots,x_n) \in \Quat^n$, then we must prove that
\[
\dist(x,e) \simeq |1-x_1|^{1/2}.
\]
Note that the above expressions are both continuous in $x$ and vanish on $\Sphere$ only if $x=e$. Hence, by compactness of $\Sphere$, we only need to prove the equivalence when $x$ is in a small neighbourhood of $e$. In this case,
\[\begin{split}
|1-x_1|^{1/2} &\simeq |1-\Re x_1|^{1/2} + |\Im x_1|^{1/2} \\
&\simeq |1-(\Re x_1)^2|^{1/2} + |\Im x_1|^{1/2} \simeq |x'| + |\Im x_1|^{1/2},
\end{split}\]
where $x' = (x_2,\dots,x_n)$. Moreover, in a neighbourhood of $e$, $(x',\Im x_1)$ is a system of local coordinates of $x$ on the manifold $\Sphere$, which are ``linearly adapted coordinates'' for the $2$-step sub-Riemannian structure on $\Sphere$ \cite[\S 4.2]{bellaiche_tangent_1996} and therefore $\dist(x,e) \simeq |x'| + |\Im x_1|^{1/2}$ as well.

\ref{en:balls_estimate}. As before, by $\QSym$-invariance, we are reduced to proving the result for $x = e = (1,0,\dots,0)$ and, by compactness of $\Sphere$, it is sufficient to consider the case where $r$ is small. If $x\in B(e,r)$ for $r$ sufficiently small, then $(x',\Im x_1)$ is a system of local coordinates for $x$ on the manifold $\Sphere$ and $\dist(x,e) \simeq |x'| + |\Im x_1|^{1/2}$, so
\[
\sigma(B(e,r)) \simeq \int_{(y,u) \in \Quat^{n-1} \times \Im \Quat, \, |y| + |u|^{1/2} \leq r } \,dy \,du \simeq r^{4n+2},
\]
and we are done.
\end{proof}

\begin{proof}[Proof of Proposition \ref{prp:est}]
Recall that in the proofs of statements \ref{en:est_invweight} and \ref{en:est_plancherel} we are assuming that $0 \leq \alpha < 3$.

\ref{en:est_invweight}. By $\QSym$-invariance of $\weight$ and $\sigma$, it is sufficient to prove that
\[
\int_{B(e,r)} \weight(e,x)^{-\alpha} \,d\sigma(x) \leq C_\alpha \min\{r^{4n+2-\alpha}, 1\},
\]
for all $r>0$, where $e = (1,0,\dots,0)$. For all $x=(x_1,\dots,x_n) \in \Quat$, if we write $x' = (x_2,\dots,x_n)$, then $\weight(e,x) = \sqrt{1-|x_1|^2} = |x'|$. From this it follows easily that
\[
\int_\Sphere |x'|^{-\alpha} \,d\sigma(x) \lesssim \int_{y \in \Quat^{n-1}, \, |y| \leq 1} |y|^{-\alpha} \,dy < \infty,
\]
since $\alpha < 4(n-1)$. Hence it is sufficient to consider the case where $r$ is small. In this case, $(x',\Im x_1)$ is a system of local coordinates on the manifold $\Sphere$ for $x \in B(e,r)$, and moreover, by Lemma \ref{lem:useful_estimates}\ref{en:distance_estimate}, $\dist(x,e) \simeq |x'| + |\Im x_1|^{1/2}$; therefore
\[
\int_{B(e,r)} |x'|^{-\alpha} \,d\sigma(x) \lesssim \int_{(y,u) \in \Quat^{n-1} \times \Im \Quat, \, |y|+|u|^{1/2} \leq r} |y|^{-\alpha} \,dy \,du \lesssim r^{4n+2-\alpha}.
\]

\ref{en:est_plancherel}.
By Propositions \ref{prp:quatharmonics} and \ref{prp:Qzonalharm},
\begin{equation}\label{eq:kernel_formula}
K_{F(\sqrt{\opL})} = \sum_{(h,m) \in \IQ} F(\sqrt{\lambda^\opL_{h,m}}) Z_{h,m}
\end{equation}
for all compactly supported bounded Borel functions $F : \R \to \C$, and moreover
\begin{equation}\label{eq:eigen_four}
\lambda^\opL_{h,m}/4 = (h-m+n)(m+n-1)-n(n-1).
\end{equation}
If $F : \R \to \C$ vanishes outside $[0,N)$, then, for all $y \in \Sphere$, by Proposition \ref{prp:interpol_ineq}, formulas \eqref{eq:kernelpolynomial_norm} and \eqref{eq:spectral_mult}, Proposition \ref{prp:mult_coefficients}\ref{en:mult_coefficients3}, and Lemma \ref{lem:useful_estimates}\ref{en:dimension_estimate},
\[\begin{split}
\| \weight^{\alpha/2}(&\cdot,y) \, K_{F(\sqrt{\opL})}(\cdot,y) \|_2^2 \\
&\leq \| M^{\alpha/2} \, K_{F(\sqrt{\opL})}(\cdot,y) \|_2^2 \\
&= \sum_{(h,m) \in \IQ} (5\gamma^\rightarrow_{h,m})^{\alpha/4} \dim \QHarm{h}{m} \, |F(\sqrt{\lambda^\opL_{h,m}})|^2\\
&\simeq \sum_{(h,m) \in \IQ} (m+1)^{2n-3+\alpha/2} (h+1)^{2n-2-\alpha/2} \\
&\qquad\qquad\times (h-2m+1)^2 \, |F(\sqrt{\lambda^\opL_{h,m}})|^2 \\
&\leq \sum_{j=1}^N \sum_{(h,m) \in I_j}
 (m+1)^{2n-3+\alpha/2} (h+1)^{2n-\alpha/2} \, \sup_{[j-1,j)} |F|^2,
\end{split}\]
where $I_j = \{ (h,m) \in \IQ \tc(j-1)^2 \leq \lambda_{h,m}^\opL < j^2\}$.

Therefore, in order to prove \ref{en:est_plancherel}, it is sufficient to show that, for all $j \in \N \setminus \{0\}$,
\[
\sum_{(h,m) \in I_j} (m+1)^{2n-3+\alpha/2} (h+1)^{2n-\alpha/2} \leq C_\alpha \, j^{4n+1-\alpha}.
\]

Note that,
if $(h,m) \in I_j$, then, by \eqref{eq:eigen_four},
\[
(j-1)^2/4 \leq ab - n(n-1) < j^2/4,
\]
where $a = h-m+n$ and $b = m+n-1$; this implies that
\[
m+1 \leq b, \qquad (h+1)/2 \leq a \leq ab \leq 2 n(n-1) j^2 ,
\]
and moreover,
for each choice of $a$, the number of values of $b$ satisfying the above inequality is at most
\[
\frac{j^2/4 + n(n-1)}{a} - \frac{(j-1)^2/4 + n(n-1)}{a} \leq \frac{j}{2a}.
\]

In conclusion,
\[\begin{split}
\sum_{(h,m) \in I_j} &(m+1)^{2n-3+\alpha/2} (h+1)^{2n-\alpha/2} \\
&\leq C_{n,\alpha} \sum_{\substack{ a,b \in \N \\ (j-1)^2/4 \leq ab - n(n-1) < j^2/4}} b^{2n-3+\alpha/2} a^{2n-\alpha/2} \\
&\leq C_{n,\alpha} \, j^{4n-6+\alpha} \sum_{\substack{ a,b \in \N \\ (j-1)^2/4 \leq ab - n(n-1) < j^2/4}} a^{3-\alpha} \\
&\leq C_{n,\alpha} \, j^{4n-5+\alpha} \sum_{a=1}^{2n(n-1) j^2} a^{2-\alpha} \\
&\leq C_{n,\alpha} \, j^{4n+1-\alpha},
\end{split}\]
and we are done; notice that in the very last step the condition $\alpha < 3$ becomes crucial.

\ref{en:est_ondiagonal}. By \eqref{eq:kernel_formula} and \eqref{eq:kernelpolynomial_norm},
\[\begin{split}
\| (1+r^2 \opL)^{-\ell} \|^2_{L^2 \to L^\infty} &= \| (1+r^2 \opL)^{-\ell} \|^2_{L^1 \to L^2} \\
&= \esssup_{y \in \Sphere} \int_\Sphere |K_{(1+r^2 \opL)^{-\ell}}(x,y)|^2 \,d\sigma(x) \\
&= \sum_{(h,m) \in \IQ} \dim\QHarm{h}{m} (1+r^2\lambda_{h,m}^\opL)^{-2\ell}.
\end{split}\]
Hence, as before,
\[\begin{split}
\| (1+r^2 \opL)^{-\ell} \|^2_{L^2 \to L^\infty} &\lesssim \sum_{(h,m) \in \IQ} (h+1)^{2n} (m+1)^{2n-3} (1+r^2\lambda_{h,m}^\opL)^{-2\ell} \\
&\lesssim \sum_{j =1}^\infty \sum_{(h,m) \in I_j} j^{4n-6} (h+1)^3 (1+r^2 (j-1)^2)^{-2\ell} \\
&\lesssim \sum_{j =1}^\infty j^{4n-6} (1+r^2 (j-1)^2)^{-2\ell} \sum_{a = 1}^{2n(n-1) j^2} a^3 \frac{j}{a} \\
&\lesssim \sum_{j =1}^\infty j^{4n+1} (1+r^2 (j-1)^2)^{-2\ell}  \\
&\lesssim \max\{1,r^{-(4n+2)}\}
\end{split}\]
whenever $\ell \geq n+1$, and the conclusion follows from Lemma \ref{lem:useful_estimates}\ref{en:balls_estimate}.

\ref{en:est_doubling}. This is an immediate consequence of Lemma \ref{lem:useful_estimates}\ref{en:balls_estimate}.
\end{proof}


\def\polhk#1{\setbox0=\hbox{#1}{\ooalign{\hidewidth\lower1.5ex\hbox{`}\hidewidth\crcr\unhbox0}}}
\providecommand{\bysame}{\leavevmode\hbox to3em{\hrulefill}\thinspace}

\end{document}